\let\Bbb=\mathbb
\newtheorem{theorem}{Theorem}[section]
\newtheorem{lemma}[theorem]{Lemma}
\theoremstyle{definition}
\newtheorem{definition}[theorem]{Definition}
\newtheorem{example}[theorem]{Example}
\newtheorem{proposition}[theorem]{Proposition}
\newtheorem{corollary}[theorem]{Corollary}
\newtheorem{Notation}[theorem]{Notation}
\newtheorem{remark}[theorem]{Remark}
\newtheorem{Notation and Remark}[theorem]{Notation and Remark}
 \long\def\comment#1{\relax}
\long\def\commented_out#1{\relax}
\DeclareMathOperator{\alfa}{\alpha}
\def\natn{\mathbb{N}}
\def\intz{\mathbb{Z}}
\newcommand{\N}[2][k]{N_{\mathbb{Z}_{#2}}(<#1)}
\newcommand{\Nd}[2][k]{N'_{\mathbb{Z}_{#2}}(<#1)}
\newcommand{\Ralfa}{R[\alfa]}
\newcommand{\Null}[1][R]{N_{#1}}
\newcommand{\Nulld}[1][R]{N'_{#1}}
\DeclareMathOperator{\im}{Im}
\newcommand{\Stab}[1][n]{St_{\alfa}(\mathbb{Z}_{#1})}
\newcommand{\PrPol}[1][R]{\mathcal{P}(#1)}
\newcommand{\PolFun}[1][R]{\mathcal{F}(#1)}
\newcommand{\Zm}[1][m]{\mathbb{Z}_{#1}}
\newcommand{\Zmalfa}[1][m]{\mathbb{Z}_{#1} [\alfa]}
\newcommand{\FZm}{\mathcal{F}(\mathbb{Z}_{m})}
\newcommand{\Zpnalfa}{\Zmalfa[p^n]}
\newcommand{\Fq}{\mathbb{F}_q}
\newcommand{\Fqa}{\mathbb{F}_q[\alpha]}
\newcommand{\restrict}[2]{#1\lower2pt\hbox{\big|}_{#2}}
\DeclareMathOperator{\quv}{\xspace{ }\triangleq\xspace{ }}
\DeclareMathOperator{\nquv}{\xspace{}\not\triangleq\xspace{}}
\begin{document}
	
	\title[Polynomial functions on rings of dual numbers]%
	{Polynomial functions on rings of dual numbers over residue class
		rings of the integers}
	\author[H. Al-Ezeh \and A. A. Al-Maktry\and S. Frisch]%
	{Hasan Al-Ezeh* \and Amr Ali Al-Maktry** \and Sophie Frisch**}
	
	\newcommand{\acr}{\newline\indent}
	
	\address{\llap{*\,}Department of Mathematics\acr
		The University of Jordan\acr
		Amman 11942\acr
		JORDAN}
	\email{alezehh@ju.edu.jo}
	\address{\llap{**\,}Department of Analysis and Number Theory (5010)\acr
		Technische Universit\"at Graz\acr
		Kopernikusgasse~24/II\acr
		8010 Graz \acr
		AUSTRIA}
	\email{almaktry@math.tugraz.at}
	\email{frisch@math.tugraz.at}
	
	
	\thanks{This work was supported by the Austrian Science Fund
		FWF projects P 27816-N26 and P 30934-N35}
	
	\subjclass[2010]{Primary 13F20;	Secondary 11T06, 13B25, 12E10,
		05A05, 06B10} 
	\keywords{finite rings, finite commutative rings, dual numbers,
		polynomials, polynomial functions, polynomial mappings, 
		polynomial permutations, permutation polynomials, null polynomials}
	
	\begin{abstract}
		The ring of dual numbers over a ring $R$ is
		$\Ralfa = R[x]/(x^2)$, where $\alfa$ denotes  $x+(x^2)$. 
		For any finite commutative ring $R$, we characterize null 
		polynomials and permutation polynomials on $\Ralfa$ in terms 
		of the functions induced by their coordinate polynomials
		($f_1,f_2\in R[x]$, where $f=f_1+\alpha f_2$) and their formal
		derivatives on $R$. 
		
		We derive explicit formulas for the number of polynomial functions 
		and the number of polynomial permutations on $\Zmalfa[p^n]$ for $n\le p$ ($p$ prime).
	\end{abstract}
	
	\maketitle
	
	The second and third authors wish to dedicate this paper
	to the memory of Prof.~Al-Ezeh, who died while the paper 
	was under review.

\section{Introduction}

Let $A$ be a finite commutative ring.
A function $F\colon A\longrightarrow A$ is called a polynomial function
on $A$ if there exists a polynomial $f=\sum_{k=0}^n c_k x^k\in A[x]$
such that $F(a)=\sum_{k=0}^n c_k a^k$ for all $a\in A$. When a
polynomial function $F$ is bijective, it is called a polynomial
permutation of $A$, and $f$ is called a permutation polynomial on $A$.

Polynomial functions on $A$ form a monoid $(\PolFun[A],\circ)$
with respect to  composition. Its group of units, which we denote
by $\PrPol[A]$, consists of all polynomial permutations of $A$.
Unless $A$ is a finite field, not every function on $A$ is a polynomial
function and not every permutation of $A$ is a polynomial permutation.
Apart from their intrinsic interest in algebra, polynomial functions
on finite rings have uses in computer science \cite{BuBaHo17,GuDu15}.

For any ring $R$, the ring of dual numbers over $R$ is defined as
$\Ralfa = R[x]/(x^2)$, where $x$ is an indeterminate and $\alfa$ stands
for $x+(x^2)$.  Rings of dual numbers are used in coding
theory~\cite{CeDeAy18ccc,DingLi15Gray}.

In this paper, we investigate the polynomial functions and polynomial
permutations of rings of dual numbers over finite rings.
Since every finite commutative ring is a direct sum of local rings,
and evaluation of polynomial functions factors through this direct
sum decomposition, we may concentrate on local rings.

Among other things, we derive explicit formulas for
$\left|\PolFun[\Zpnalfa]\right|$ and $\left|\PrPol[\Zpnalfa]\right|$
where  $n\le p$. Here, as in
the remainder of this paper, $p$ is a prime number and, for
any natural number $m$, $\mathbb{Z}_m$ stands for the
ring of integers modulo $m$, that is, $\mathbb{Z}_m=\mathbb{Z}/m\mathbb{Z}$.

The study of the monoid of polynomial functions and the group of
polynomial permutations on a finite ring $R$ essentially originated
with Kempner, who, in 1921, determined their orders in the case
where $R$ is the ring of integers modulo a prime power:
\begin{equation} \label{Kemp}
	\left| \PolFun[\mathbb{Z}_{p^n}] \right| =
	p^{\sum_{k=1}^n \mu(p^k)}
	\qquad\hbox{ and }\qquad \left| \PrPol[\mathbb{Z}_{p^n}]
	\right| = p! p^p (p-1)^p p^{\sum_{k=3}^n \mu(p^k)} \qquad\text{for}\  n>1,
\end{equation}
where $\mu(p^k)$ is the minimal $l\in \natn$ such that $p^k$
divides $l\/!$, that is, the minimal $l\in\natn$ for which
$\sum_{j\ge 1}\lfloor{\frac{l}{p^j}}\rfloor \ge k$.
(Here $\lfloor{z}\rfloor$ means the largest integer smaller than or equal to
$z$).

Kempner's proof has been simplified \cite{pol1,pol2,Wie83pfZm} and his
formulas shown to hold for more general classes of local
rings~\cite{gal,Suit,Nec80PTPI} when $p$ is replaced by the order of
the residue field and $n$ by the nilpotency of the maximal ideal.
The classes of local rings for which Kempner's formulas hold
{\it mutatis mutandis} have been up to now the only finite local
rings $(R,M)$ for which explicit formulas for $|\PolFun[R]|$ and
$|\PrPol[R]|$ are known. (By explicit formula, we mean one that
depends only on readily apparent parameters of the finite local ring,
such as the order of the ring and its residue field, and
the nilpotency of the maximal ideal.)

What all the finite local rings $(A,M)$ for which explicit formulas
for $|\PolFun[A]|$ and $|\PrPol[A]|$ are known have in common is
the following property:
If $m$ is the nilpotency of the maximal ideal $M$ of $A$, and we
denote by $w(a)$ the maximal $k\le m$ such that $a\in M^k$, then,
for any $a,b\in A$,
\[
w(ab) = \min(w(a)+w(b), m),
\]
that is, $A$ allows a kind of truncated discrete valuation, with
values in the additive monoid on $\{0,1,2,\ldots, m\}$, whose addition
is $u\oplus v = \min(u+v, m)$.

Rings of dual numbers over $\mathbb{Z}_{p^n}$, for which we provide
explicit formulas for the number of polynomial functions and the
number of polynomial permutations in Theorems~\ref{countfunc} and
\ref{countperm}, do not have this property, except for $n=1$, see
Proposition~\ref{sutcon}.

Statements about the number of polynomial functions and
polynomial permutations that hold for any finite commutative ring $A$ are
necessarily less explicit in nature than the counting formulas in
Equation~(\ref{Kemp}) on one hand and Theorems~\ref{countperm}
and~\ref{countfunc} on the other hand.

G\"orcs\"os, Horv\'ath and M\'esz\'aros~\cite{PERForm}
provide a formula, valid for any finite local commutative ring
that satisfies the condition $M^{|A/M|}=\{0\}$,
expressing the number of polynomial permutations in terms of the
cardinalities of the annihilators of the ideals $M_k$ generated
by the $k$-th powers of elements of the maximal ideal.
We will not make use of this formula, however, but prove our
counting formulas from scratch, in a way that yields additional
insight into the structure of the monoid of polynomial functions
and the group of polynomial permutations on rings of dual numbers.
Also for any finite local commutative ring $A$,
Jiang~\cite{ratio} has determined the ratio of
$\left|\PrPol[A]\right|$ to $\left|\PolFun[A]\right|$,
see Remark~\ref{tocorrecproof}.

Chen~\cite{Chen96pfZni}, Wei and Zhang~\cite{WZ07sos,WZ09pp2v},
Liu and Jiang~\cite{LJ09pfnv}, among others \cite{NoeOpE,weakstrong}
have generalized facts about polynomial functions in one variable
to several variables. Starting with polynomial functions over rings
of dual numbers, we get a different kind of generalization to
several parameters, if we replace $R[\alpha]$ by
$R[\alpha_1,\ldots, \alpha_n]$ with $\alpha_i\alpha_j=0$. The
second author has shown that most results of the present paper
carry over to this generalization~\cite{severalvar}.

Beyond number formulas, some structural results about groups of
permutation polynomials on $\mathbb{Z}_{p^n}$ are known, due to
N\"obauer~\cite{Noe55GRP,NoePFPR82} and others~\cite{Zh04pfpp,per2}.

In this paper,
we derive structural results about $\PolFun[\Ralfa]$ and $\PrPol[\Ralfa]$
by relating them to $\PolFun[R]$ and $\PrPol[R]$, and then use these
results to prove explicit formulas for
$|\PolFun[\Zpnalfa]|$ and $|\PrPol[\Zpnalfa]|$ in the case $n\le p$.

Here is an outline of the paper.
After establishing some notation in Section~\ref{sec1}, we
characterize  null polynomials on ${\Ralfa}$  in Section~\ref{sec2} and
permutation  polynomials on ${\Ralfa}$ in Section~\ref{sec3},
for any finite local ring $R$.
Section~\ref{stabsect} relates the pointwise stabilizer of $R$
in the group of polynomial permutations on $\Ralfa$ to functions
induced by the formal derivatives of permutation polynomials.
Section~\ref{sec4} relates permutation polynomials on $\Zmalfa[p^n]$
to permutation polynomials on $\mathbb{Z}_{p^n}$.
Section~\ref{sec5} contains counting formulas for the numbers of polynomial
functions and polynomial permutations on  $\Zmalfa[p^n]$ in terms
of the order of the pointwise stabilizer of $\mathbb{Z}_{p^n}$ in
the group of polynomial permutations on $\Zmalfa[p^n]$.
Section~\ref{sec6} contains explicit formulas for
$|\PolFun[\Zpnalfa]|$ and $|\PrPol[\Zpnalfa]|$ for $n\le p$.
Section~\ref{sec7} gives a canonical representation for
polynomial functions on $\Zmalfa[p^n]$  for $n\le p$.
The easy special case where $R$ is a finite field is treated
en passant in sections~\ref{sec2} and \ref{sec3}.

\section{Basics}\label{sec1}

We recall a few facts about rings of dual numbers and
polynomial functions, and establish our notation.
Since we are mostly concerned with polynomials over finite rings,
we have to distinguish carefully between polynomials and the
functions induced by them.      All rings are assumed to have a
unit element and to be commutative.

Throughout this paper, $p$ always stands for a prime number.
We use $\mathbb{N}$ for
the positive integers (natural numbers), $\mathbb{N}=\{1,2,3,\ldots\}$,
and $\mathbb{N}_0=\{0,1,2,\ldots\}$ for the non-negative integers.

\begin{definition}\label{polyfuncnarrow}
	Let $R$ be a ring and $a_0,\ldots,a_n\in R$.
	The polynomial $f=\sum_{i=0}^{n}a_ix^i\in R[x]$ defines (or induces)
	a function $F\colon R\longrightarrow R$ by substitution
	of the variable: $F(r)=\sum_{i=0}^{n}a_ir^i$. A function arising
	from a polynomial in this way is called a \emph{polynomial function}.
	
	If the  polynomial function $F\colon R \rightarrow R$  induced by
	$f\in R[x]$  is bijective, then $F$ is called a
	\emph{polynomial permutation} of $R$ and $f$ is called  a
	\emph{permutation polynomial} on $R$.
\end{definition}

We will frequently consider polynomials with coefficients in $\intz$
inducing functions on $\intz_m$ for various $m$. We put this on
a formal footing in the next definition.
\begin{definition}\label{Algfun}
	Let $S$ be a commutative ring, $R$ an $S$-algebra and  $f\in S[x]$.
	\begin{enumerate}
		\item
		The polynomial $f$ gives rise to a polynomial function on
		$R$, by substitution of the variable with elements of $R$. We denote
		this function by $[f]_R$, or just by $[f]$, when $R$ is understood.
		\item
		In the special case where $S=\intz$ and $R=\intz_m$, we write
		$[f]_m$ for $[f]_{\intz_m}$\!.
		\item
		When $[f]_R$ is a permutation on $R$, we call $f$ a
		\textit{permutation polynomial} on $R$.
		\item
		If $f,g\in S[x]$ such that $[f]_R=[g]_R$, we write
		$f \quv  g$ on $R$.
	\end{enumerate}
\end{definition}
\begin{remark}\label{homo}
	\leavevmode
	\begin{enumerate}
		\item
		Clearly, $\quv$ is an equivalence relation on $S[x]$.
		\item
		When $R=S$, or $R$ is a homomorphic image of $S$, the
		equivalence classes of $\quv$ are in bijective correspondence
		with the polynomial functions on $R$.
		\item
		In particular, when $R$ is finite, the number
		of different polynomial functions on $R$ equals the number of
		equivalence classes of $\quv$ on $R[x]$.
	\end{enumerate}
\end{remark}

We now introduce the class of rings whose polynomial functions
and polynomial permutations we will investigate.
\begin{definition}\label{001}
	Throughout this paper, if $R$ is a commutative ring, then $\Ralfa$
	denotes the result of adjoining $\alfa$ with $\alfa^2=0$ to $R$;
	that is, $\Ralfa$ is $R[x]/(x^2)$, where $\alfa =x+(x^2)$.
	The ring $\Ralfa$ is called the \emph{ring of dual numbers over $R$}.
\end{definition}
\begin{remark}
	Note that $R$ is canonically embedded as a subring in $\Ralfa$
	via $a\mapsto a+ 0\alpha$.
\end{remark}

For the convenience of the reader, we summarize some easy facts
about the arithmetic of rings of dual numbers.
\begin{proposition}\label{0}
	Let $R$ be a commutative ring. Then
	\begin{enumerate}
		\item[\rm (1)]
		for $a,b,c,d\in R$, we have
		
		\begin{enumerate}
			\item[\rm (a)]
			$(a+b\alfa)(c+d\alfa)=ac+(ad+bc)\alfa$
			\item[\rm (b)]
			$(a+b\alfa)$ is a unit of $\Ralfa$ if and only if
			$a$ is a unit of $R$. In this case\\
			$(a+b\alfa)^{-1}=a^{-1}-a^{-2}b\alfa$.
		\end{enumerate}
		\item[\rm (2)]
		$\Ralfa$ is a local ring if and only if $R$ is a local ring.
		\item[\rm (3)]
		If $R$ is a local ring with maximal ideal $ \mathfrak{m}$
		of  nilpotency $K$, then $\Ralfa$ is a local ring with maximal
		ideal
		$ \mathfrak{m}+\alfa R=\{a + b\alpha\mid a\in \mathfrak{m},\; b\in R\}$
		of nilpotency $K+1$.
		\item[\rm (4)]\label{resfieldiso}
		Let $(R, \mathfrak{m})$ be a local ring. The canonical embedding
		$r\mapsto r + 0\alpha$ factors through to an isomorphism of the
		residue fields of $R$ and $\Ralfa$:
		$R/\mathfrak{m} \cong \Ralfa/ (\mathfrak{m}+\alfa R)$.
	\end{enumerate}
\end{proposition}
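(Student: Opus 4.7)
The plan is to verify the four assertions by direct computation in $R[x]/(x^2)$, exploiting the single relation $\alpha^2=0$. I would take (1)(a) first, simply expanding $(a+b\alpha)(c+d\alpha)$ and discarding the $bd\alpha^2$ term. For (1)(b), the reduction $\Ralfa/\alpha R \cong R$ shows immediately that a unit of $\Ralfa$ must have unit residue $a$ in $R$; conversely, given a unit $a\in R$, I would verify by (1)(a) that $(a+b\alpha)(a^{-1}-a^{-2}b\alpha)=1$.

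For (2), I would identify the set of nonunits of $\Ralfa$: by (1)(b) this is exactly $\{a+b\alpha\mid a\text{ nonunit in }R,\ b\in R\}$. A ring is local iff its nonunits form an ideal, and this set is an ideal of $\Ralfa$ iff the nonunits of $R$ form an ideal of $R$, giving the equivalence. When $R$ is local with maximal ideal $\mathfrak{m}$, the description also identifies the maximal ideal of $\Ralfa$ as $\mathfrak{m}+\alpha R$, which is the first half of (3).

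For the nilpotency statement in (3), I would expand $(\mathfrak{m}+\alpha R)^n$. Because $\alpha^2=0$, only monomials with at most one factor from $\alpha R$ survive, so
\[
(\mathfrak{m}+\alpha R)^n = \mathfrak{m}^n + \alpha R\cdot\mathfrak{m}^{n-1}.
\]
Taking $n=K+1$ and using $\mathfrak{m}^{K}=0$ shows $(\mathfrak{m}+\alpha R)^{K+1}=0$. For sharpness, $n=K$ gives $\alpha\mathfrak{m}^{K-1}$, which is nonzero since $\mathfrak{m}^{K-1}\neq 0$ and $\alpha$ is a free generator of $\alpha R$ as an $R$-module. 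Hence the nilpotency is exactly $K+1$.

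Finally, for (4) I would consider the composite homomorphism $R\hookrightarrow \Ralfa \twoheadrightarrow \Ralfa/(\mathfrak{m}+\alpha R)$. Surjectivity is clear because $a+b\alpha\equiv a\pmod{\mathfrak{m}+\alpha R}$, and the kernel equals $R\cap(\mathfrak{m}+\alpha R)=\mathfrak{m}$, yielding the claimed isomorphism of residue fields. I do not anticipate a real obstacle here; the only point that requires a moment's care is the sharpness half of the nilpotency claim in (3), where one must ensure that $\alpha\mathfrak{m}^{K-1}$ is genuinely nonzero inside $\Ralfa$, which follows from $\Ralfa$ being free of rank $2$ as an $R$-module with basis $\{1,\alpha\}$.
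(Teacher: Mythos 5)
Your proof is correct and complete; the paper states Proposition~\ref{0} without proof, as a summary of easy facts about the arithmetic of dual numbers, so there is no argument of the authors' to compare against. Your direct computations --- the expansion using $\alpha^2=0$ for (1), the identification of the nonunits for (2) and (3), the expansion $(\mathfrak{m}+\alpha R)^n=\mathfrak{m}^n+\alpha\mathfrak{m}^{n-1}R$ together with the freeness of $\Ralfa$ over $R$ for the sharpness of the nilpotency, and the kernel computation for (4) --- are exactly the routine verifications the authors leave to the reader.
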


Likewise, we summarize the  details of substituting dual numbers
for the variable in a polynomial with coefficients in the ring of
dual numbers below.

As usual, $f'$ denotes the formal derivative of a polynomial $f$.
That is, $f'=\sum_{k=1}^{n}ka_kx^{k-1}$ for $f=\sum_{k=0}^{n} a_k x^k$.
\begin{lemma}\label{02} \label{3} \label{21}
	Let $R$ be a commutative ring, and  let $a,b\in R$.
	\begin{enumerate}
		\item[\rm (1)]
		Let $f\in R[\alfa][x]$ and $f_1, f_2 \in R[x]$ be the unique polynomials
		in $R[x]$ such that $f =f_1 +\alfa f_2$. Then
		\[
		f(a+b\alfa)=f_1(a)+ (bf_1'(a)+f_2(a))\alfa.
		\]
		\item[\rm (2)]
		In the special case when $f\in R[x]$, we get
		\[
		f(a+b\alfa)=f(a)+ bf'(a)\alfa.
		\]
	\end{enumerate}
\end{lemma}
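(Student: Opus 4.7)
The plan is to prove part (2) first by direct expansion, then to bootstrap to part (1) by applying (2) to the coordinate polynomials $f_1$ and $f_2$ separately and exploiting $\alpha^2 = 0$.

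For part (2), I would first establish the pointwise identity $(a+b\alpha)^k = a^k + k a^{k-1} b\, \alpha$ for every $k \in \mathbb{N}_0$. This follows from the binomial theorem applied in the commutative ring $R[\alpha]$: every term with a factor of $\alpha^j$ for $j \ge 2$ vanishes because $\alpha^2 = 0$, leaving only the two terms corresponding to $j=0$ and $j=1$. (For $k = 0$ the identity reads $1 = 1$, and for $k \ge 1$ the single surviving mixed term is $\binom{k}{1} a^{k-1}(b\alpha) = k a^{k-1}b\, \alpha$.) Then writing $f = \sum_{k=0}^n c_k x^k$ with $c_k \in R$ and summing the identity above, linearity gives
\[
f(a+b\alpha) = \sum_{k=0}^{n} c_k a^k + \Bigl(\sum_{k=1}^{n} k c_k a^{k-1}\Bigr) b\, \alpha = f(a) + b f'(a)\, \alpha,
\]
which is (2).

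For part (1), I would substitute $x = a + b\alpha$ into $f = f_1 + \alpha f_2$ and apply (2) to both $f_1$ and $f_2$, which lie in $R[x]$:
\[
f(a+b\alpha) = f_1(a+b\alpha) + \alpha\, f_2(a+b\alpha) = \bigl(f_1(a) + b f_1'(a)\,\alpha\bigr) + \alpha\bigl(f_2(a) + b f_2'(a)\,\alpha\bigr).
\]
The last term contains $\alpha^2 = 0$ and hence vanishes, yielding $f_1(a) + \bigl(b f_1'(a) + f_2(a)\bigr)\alpha$, which is (1).

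There is no real obstacle here. The only point that deserves care is making sure that the substitution homomorphism $R[\alpha][x] \to R[\alpha]$ sending $x \mapsto a+b\alpha$ is well-defined and commutes with the decomposition $f = f_1 + \alpha f_2$, which is automatic because $R[\alpha][x] \cong R[x][\alpha]$ as $R$-algebras; after that, everything reduces to the single computation $\alpha^2 = 0$.
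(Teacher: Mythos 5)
Your proof is correct. The paper states this lemma without proof, treating it as a routine summary of substitution in rings of dual numbers; your argument (binomial expansion killing all $\alpha^j$ with $j\ge 2$ to get $(a+b\alpha)^k=a^k+ka^{k-1}b\,\alpha$, then linearity for (2), then applying (2) to $f_1$ and $f_2$ and using $\alpha^2=0$ for (1)) is exactly the standard verification the authors had in mind.
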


As a consequence of the above lemma, we obtain a necessary condition
for a function on $\Ralfa$ to be a polynomial function.
\begin{corollary}
	Let $F\colon \Ralfa\rightarrow \Ralfa$ such that
	$F(a+b\alfa)=c_{(a,b)}+d_{(a,b)}\alfa$
	with $c_{(a,b)}, d_{(a,b)}\in R$.
	If $F$ is a polynomial function on $\Ralfa$, then
	$c_{(a,b)}$ depends only on $a$, that is, $c_{(a,b)}=c_{(a,b_1)}$
	for all $a,b, b_1 \in R$.
\end{corollary}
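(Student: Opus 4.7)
The plan is to apply Lemma~\ref{02}~(1) directly. Since $F$ is a polynomial function on $\Ralfa$, there exists a polynomial $f \in \Ralfa[x]$ such that $F(a+b\alfa) = f(a+b\alfa)$ for all $a,b \in R$. I would write $f = f_1 + \alfa f_2$ uniquely with $f_1, f_2 \in R[x]$, which is possible because $\Ralfa[x] \cong R[x] \oplus \alfa R[x]$ as an $R[x]$-module (every coefficient of $f$ lies in $\Ralfa = R \oplus R\alfa$, so we can split it coordinate-wise).

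By Lemma~\ref{02}~(1), this gives
\[
F(a+b\alfa) = f_1(a) + \bigl(bf_1'(a) + f_2(a)\bigr)\alfa.
\]
Comparing with the assumed form $F(a+b\alfa) = c_{(a,b)} + d_{(a,b)}\alfa$, and using that the decomposition of an element of $\Ralfa$ into its $R$-part and its $\alfa$-part is unique, I read off
\[
c_{(a,b)} = f_1(a), \qquad d_{(a,b)} = b f_1'(a) + f_2(a).
\]
The right-hand side of the first equation does not involve $b$, so $c_{(a,b)} = c_{(a,b_1)}$ for every $a, b, b_1 \in R$, which is exactly what was to be shown.

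There is essentially no obstacle here: the entire content of the statement is already packaged inside Lemma~\ref{02}, and the proof amounts to invoking that lemma and matching coordinates. The only minor point worth spelling out is the uniqueness of the decomposition $f = f_1 + \alfa f_2$ (so that the expression $f_1(a)$ is unambiguous) and the uniqueness of the $R$-coordinate of an element of $\Ralfa$ (so that one may legitimately equate $c_{(a,b)}$ with $f_1(a)$). Both facts follow immediately from $\Ralfa$ being a free $R$-module with basis $\{1,\alfa\}$.
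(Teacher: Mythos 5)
Your proof is correct and is exactly the argument the paper intends: the corollary is stated as an immediate consequence of Lemma~\ref{02}~(1), and your coordinate-matching via the unique decomposition $f=f_1+\alfa f_2$ is precisely that deduction spelled out. Nothing is missing.
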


The last proposition of this section goes to show that
rings of dual numbers over ${\mathbb{Z}}_{p^n}$ ($n>1$)
are a class of local rings for which no explicit formulas
for the number of polynomial functions existed previously.
By an explicit formula we mean a formula depending only on
the order of the residue field and the nilpotency of the
maximal ideal.
\begin{proposition} \label{sutcon}
	For a finite local ring $R$ with maximal ideal $\mathfrak{m}$
	of nilpotency $K$, consider the following condition:
	
	``For all $a,b\in R$ and all $k\in \mathbb{N}$,
	whenever $ab\in\mathfrak{m}^k$, it follows that
	$a\in\mathfrak{m}^i$ and $b\in \mathfrak{m}^j$ for
	$i,j\in\mathbb{N}_0$ with $i+j\geq \min(K,k)$.''
	
	Then $R=\mathbb{\mathbb{Z}}_{p^n}[\alfa]$ satisfies the condition
	if and only if $n=1$.
\end{proposition}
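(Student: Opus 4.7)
My plan is to start by computing the powers of the maximal ideal $\mathfrak{m}=pR+\alpha R$ of $R=\mathbb{Z}_{p^n}[\alpha]$ explicitly. Since $\alpha^2=0$, any monomial of length $i$ in the generators $p,\alpha$ of $\mathfrak{m}$ has at most one factor $\alpha$, which should give
\[
\mathfrak{m}^i \;=\; p^i R + p^{i-1}\alpha R \;=\; \{u+v\alpha \mid u\in p^i\mathbb{Z}_{p^n},\ v\in p^{i-1}\mathbb{Z}_{p^n}\}
\]
for every $i\geq 1$, consistently with the value $K=n+1$ asserted in Proposition~\ref{0}(3). The key consequence I will extract is that $\alpha\in\mathfrak{m}^i$ requires $1\in p^{i-1}\mathbb{Z}_{p^n}$, i.e.\ $p^{i-1}$ to be a unit of $\mathbb{Z}_{p^n}$; so the maximal such $i$ is $1$.

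For the ``only if'' direction ($n\geq 2$) I will exhibit $a=b=\alpha$ as a counterexample: $ab=\alpha^2=0$ lies in $\mathfrak{m}^K$, so one may take $k=K$ with $\min(K,k)=n+1\geq 3$, while any allowable choice satisfies $i,j\leq 1$, forcing $i+j\leq 2<n+1$ and violating the condition.

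For the ``if'' direction ($n=1$) I will verify the condition directly, exploiting that $p=0$ in $R=\mathbb{Z}_p[\alpha]$, so that $\mathfrak{m}=\alpha R$, $\mathfrak{m}^2=0$, and $K=2$. The case $k=0$ is vacuous; for $k=1$, $ab$ is a nonunit, so at least one of $a,b$ lies in $\mathfrak{m}$ and $i+j\geq 1$ is attainable. For $k\geq 2$, we have $ab=0$, and passing to the residue field $R/\mathfrak{m}\cong\mathbb{Z}_p$ shows that at least one of $a,b$ --- say $a$ --- lies in $\mathfrak{m}$. Writing $a=a_1\alpha$ and $b=b_0+b_1\alpha$, the expansion $ab=a_1 b_0\alpha$ and the fact that $\mathbb{Z}_p$ is a field then force either $a_1=0$ (so $a=0$ and $a\in\mathfrak{m}^i$ for every $i$) or $b_0=0$ (so $b\in\mathfrak{m}$); in either case $i+j\geq 2=K=\min(K,k)$ is achievable.

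The principal work is the short case analysis in the $n=1$ verification; the obstruction for $n\geq 2$ is transparent once the explicit description of $\mathfrak{m}^i$ is in hand, since the element $\alpha$ already has ``valuation'' $1$, while the nilpotency $K=n+1$ grows with $n$.
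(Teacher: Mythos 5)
Your proof is correct and follows essentially the same route as the paper: for $n\ge 2$ you use the identical counterexample $a=b=\alpha$ (with $\alpha^2=0\in\mathfrak{m}^{K}$ while $\alpha\in\mathfrak{m}\smallsetminus\mathfrak{m}^2$), and for $n=1$ you verify the condition using $\mathfrak{m}^2=(0)$, merely spelling out the short case analysis that the paper dismisses as ``easily follows.'' The explicit description $\mathfrak{m}^i=p^iR+p^{i-1}\alpha R$ is a correct and convenient way to package the key fact that $\alpha$ never lies in $\mathfrak{m}^2$ when $n\ge 2$.
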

\begin{proof}
	Since $\mathbb{Z}_{p^n}$ is a local ring
	with maximal ideal $(p)$, $\mathbb{\mathbb{Z}}_{p^n}[\alfa]$
	is a local ring with maximal ideal
	$\mathfrak{m}=\{ap+b\alfa\mid a, b \in \mathbb{\mathbb{Z}}_{p^n}\}$
	and  $K=n+1$ by Proposition~\ref{0}. If $n=1$, then the result
	easily follows since $\mathfrak{m}^2=(0)$.
	If $n\geq 2$, then $K=n+1>2$, and $\alfa^2=0\in\mathfrak{m}^{n+1}$,
	but $\alfa \in \mathfrak{m}\smallsetminus\mathfrak{m}^2$.
\end{proof}

Local rings satisfying the condition of Proposition~\ref{sutcon}
have been called suitable in a previous paper by the third
author~\cite{Suit}. Previously known explicit formulas for the
number of polynomial functions and the number of polynomial
permutations on a finite local ring $(R,M)$ all concern suitable rings
and are the same as Kempner's formulas~(\ref{Kemp}) for
$R=\mathbb{{Z}}_{p^n}$, except that $p$ is replaced by $q=|R/M|$ and
$n$ by the nilpotency of $M$. The previous proposition shows that,
whenever $n>1$, $\mathbb{\mathbb{Z}}_{p^n}[\alfa]$ is not a
``suitable'' ring.

\section{Null polynomials on $R[\alfa]$} \label{sec2}

When one sets out to count the polynomial functions on a finite
ring $A$, one is lead to studying the ideal of
so called null-polynomials
-- polynomials in $A[x]$ that induce the zero-function on $A$ --,
because residue classes of $A[x]$ modulo this
ideal correspond bijectively to polynomial functions on~$A$.

In this section, we study null-polynomials for
rings of dual numbers $A=\Ralfa$ as defined in the previous section
(Definition~\ref{001}). We relate polynomial functions on $\Ralfa$
(induced by polynomials in $\Ralfa[x]$) to polynomial
functions induced on $\Ralfa$ by polynomials in $R[x]$, and further
to pairs of polynomial functions on $R$ arising from polynomials in
$R[x]$ and their formal derivatives.
\begin{definition} \label{020}
	Let $R$ be a commutative ring and $A$ an $R$-algebra, and notation as
	in Definition~\ref{Algfun}.
	A polynomial $f\in R[x]$ is called  a {\em null polynomial} on $A$
	if $[f]_A$ is the constant zero function, which we denote by
	$f \quv 0$  on $A$.
	
	We define $\Null$ and $\Null'$ as
	\begin{enumerate}
		\item  $\Null=\{ f\in R[x] \mid f \quv 0 \text{ on } R\}$
		\item
		$\Null'=\{ f\in R[x] \mid f \quv 0 \text{ on } R \text{ and }
		f' \quv 0 \text{ on } R\}$.
	\end{enumerate}
\end{definition}
\begin{remark}\label{nullinpolf}
	Clearly, $\Null,\Null'$ are ideals of $R[x]$, and we have
	$|\PolFun|=[R[x]:\Null]$.
\end{remark}
\begin{example}\label{insexamp}
	Let $R=\mathbb{F}_q$ be the finite field of $q$ elements. Then
	\begin{enumerate}
		\item
		$\Null[\mathbb{F}_q]=(x^q-x)\mathbb{F}_q[x]$
		\item
		$\Nulld[\mathbb{F}_q]=(x^q-x)^2\mathbb{F}_q[x]$
		\item $[\mathbb{F}_q[x]:\Nulld[\mathbb{F}_q]]=q^{2q}$.
	\end{enumerate}
	To see (2), let $g\in \Nulld[\mathbb{F}_q] $.
	Then clearly, $g(x)=h(x)(x^q-x)$.
	Hence	\[
	g'(x)=h(x)(qx^{q-1}-1)+h'(x)(x^q-x)=h'(x)(x^q-x)-h(x),
	\]
	and so $0 \quv g' \quv -h$ on $\mathbb{F}_q$.
	Thus  $h$ is a null polynomial on $\mathbb{F}_q$, and hence
	divisible by $(x^q-x)$.
\end{example}

By means of the ideal $\Null'$, we will reduce questions about
polynomials with coefficients in $\Ralfa$ to questions about
polynomials with coefficients in $R$, as exemplified in
Proposition~\ref{connection} below.

\begin{lemma}\label{31}
	Let $f\in R[x]$. Then
	\begin{enumerate}
		\item[\rm (1)]
		$f$ is a null polynomial on $\Ralfa$ if and only if
		both $f$ and $f'$ are null polynomials on $R$
		\item[\rm (2)]
		$\alfa f$ is a null polynomial on $\Ralfa$ if and only if
		$f$ is a null polynomial on $R$.
	\end{enumerate}
\end{lemma}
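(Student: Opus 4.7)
The plan is to apply Lemma \ref{02} and then exploit the uniqueness of the representation $r + s\alpha$ of elements of $\Ralfa$ with $r,s \in R$.

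For part (1), since $f \in R[x]$, Lemma \ref{02}(2) yields $f(a+b\alfa) = f(a) + b f'(a)\alfa$ for all $a,b \in R$. Because every element of $\Ralfa$ has a unique expression $r + s\alpha$ with $r,s\in R$, the equation $f(a) + bf'(a)\alfa = 0$ is equivalent to the simultaneous vanishing of $f(a)$ and $bf'(a)$ in $R$. Saying $f$ is null on $\Ralfa$ is saying this holds for all $a,b \in R$. The condition $f(a) = 0$ for all $a$ is exactly that $f$ is null on $R$; specializing $b=1$ in the condition $bf'(a)=0$ for all $a,b$ extracts $f'(a) = 0$ for all $a$, i.e.\ $f'$ is null on $R$. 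Conversely, if both $f$ and $f'$ induce the zero function on $R$, the formula in Lemma \ref{02}(2) gives $f(a+b\alfa) = 0$ identically, so $f$ is null on $\Ralfa$.

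For part (2), view $\alfa f$ as an element of $\Ralfa[x]$ with coordinate polynomials $f_1 = 0$ and $f_2 = f$ in the decomposition of Lemma \ref{02}(1). That lemma then gives
\[
(\alfa f)(a+b\alfa) = f_1(a) + \bigl(b f_1'(a) + f_2(a)\bigr)\alfa = f(a)\,\alfa
\]
for all $a,b \in R$. By the uniqueness of the dual-number representation, this vanishes for all $a,b$ if and only if $f(a) = 0$ for all $a \in R$, which is the statement that $f$ is null on $R$.

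Neither step contains a real obstacle; the only point requiring care is the forward direction of (1), where one must vary $b$ independently of $a$ in order to separate the two null-polynomial conditions. Everything else is a direct reading of Lemma \ref{02} together with the fact that $R \oplus \alpha R = \Ralfa$ as an $R$-module.
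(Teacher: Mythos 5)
Your proof is correct and follows essentially the same route as the paper: apply Lemma~\ref{02} to write $f(a+b\alfa)=f(a)+bf'(a)\alfa$, use the uniqueness of the representation $r+s\alfa$ to split the vanishing into the two conditions on $R$, and specialize $b=1$ for the derivative; part~(2) is the same direct computation the paper invokes. No issues.
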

\begin{proof}
	Ad (1).  By Lemma~\ref{02}, for every $a,b\in R$,
	$f(a+b\alfa)=f(a)+ bf'(a)\alfa$. Thus  by Definition~\ref{020}, $f$
	being a null polynomial on $\Ralfa$ is equivalent to
	$f(a)+ bf'(a)\alfa= 0$ for all $a,b\in R$. This is equivalent to
	$f(a)=0$ and $bf'(a)= 0$ for all $a,b\in R$. Setting $b=1$, we
	see that $f(a)=0$ and $f'(a)= 0$ for all $a\in R$. Hence $f$
	and $f'$ are null polynomials on  $R$.\\
	Statement (2) follows from Lemma~\ref{02}.
\end{proof}
\begin{theorem} \label{4}
	Let $f\in \Ralfa[x]$, written as $f =f_1 + \alfa f_2$ with
	$f_1, f_2 \in R[x]$.
	\smallskip
	
	$f$ is a null polynomial on $\Ralfa$ if and only if
	$f_1$, $f_1'$, and $f_2$ are null polynomials on $R$.
\end{theorem}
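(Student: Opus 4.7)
The plan is to reduce the statement directly to the explicit formula in Lemma~\ref{02}(1), which tells us that for $f = f_1 + \alpha f_2$ and arbitrary $a,b \in R$,
\[
f(a + b\alpha) \;=\; f_1(a) \;+\; \bigl(b f_1'(a) + f_2(a)\bigr)\alpha.
\]
Since $\Ralfa = R \oplus R\alpha$ as $R$-modules, the element $f(a+b\alpha)$ is zero if and only if both its $R$-component and its $\alpha$-component vanish. So ``$f$ is null on $\Ralfa$'' translates to the pair of conditions $f_1(a) = 0$ and $b f_1'(a) + f_2(a) = 0$ holding for every $a,b \in R$.

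From here the argument is a short quantifier manipulation. The condition $f_1(a) = 0$ for all $a\in R$ is exactly the statement that $f_1$ is a null polynomial on $R$. For the second condition, I fix $a$ and vary $b$: taking $b=0$ yields $f_2(a)=0$, and then taking $b=1$ yields $f_1'(a)=0$. Since $a$ was arbitrary, we recover that both $f_2$ and $f_1'$ are null on $R$. Conversely, if $f_1, f_1', f_2$ all vanish identically on $R$, the displayed formula makes $f(a+b\alpha)=0$ for every $a+b\alpha \in \Ralfa$, so $f$ is null on $\Ralfa$.

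Alternatively, one can bypass the pointwise calculation and package the proof through Lemma~\ref{31}: split $f = f_1 + \alpha f_2$, where $f_1 \in R[x]$ and $\alpha f_2 \in \alpha R[x]$; apply part~(1) of Lemma~\ref{31} to the $R[x]$-summand and part~(2) to the $\alpha R[x]$-summand. The only point requiring care is that the two summands contribute to different ``coordinates'' in $\Ralfa$ only after evaluation, so one still needs the explicit form from Lemma~\ref{02}(1) to see that the two halves do not interact. I do not anticipate any genuine obstacle here; the real content of the theorem is already in Lemma~\ref{02}(1), and the rest is separating variables.
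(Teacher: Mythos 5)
Your main argument is correct and is essentially the paper's own proof: both reduce to the evaluation formula $f(a+b\alpha)=f_1(a)+(bf_1'(a)+f_2(a))\alpha$ from Lemma~\ref{02}(1) and then substitute $b=0$ and $b=1$ to separate the conditions on $f_2$ and $f_1'$. Your remark that the alternative route through Lemma~\ref{31} still needs the explicit evaluation formula to see that the two summands decouple is also accurate (the paper records that decoupling only afterwards, as Corollary~\ref{nullrestat}).
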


\begin{proof}
	By Lemma~\ref{3}, for all $a,b\in R$,
	\[
	f(a+b\alfa)=f_1(a)+ (bf_1'(a)+f_2(a))\alfa.
	\]
	This implies the ``if'' direction.
	To  see ``only if'', suppose that $f$ is a null polynomial on
	$\Ralfa$. Then, for all $a,b\in R$,
	\begin{equation*}
		f_1(a)+ (bf_1'(a)+f_2(a))\alfa=0.
	\end{equation*}
	Clearly,  $f_1$ is a null polynomial on $R$. Substituting  $0$ for
	$b$ yields that $f_2$ is a null polynomial on $R$ and
	substituting $1$ for   $b$ yields that $f_1'$ is a null
	polynomial on $R$.
\end{proof}

Combining Lemma~\ref{31} with Theorem~\ref{4} gives the following
criterion.
\begin{corollary} \label{nullrestat}
	Let $f\in \Ralfa[x]$, written as $f =f_1 + \alfa f_2$ with
	$f_1, f_2 \in R[x]$.
	\smallskip
	
	$f$ is a null polynomial on $\Ralfa$ if and only if
	$f_1$ and $\alfa f_2$ are null polynomials on $\Ralfa$.
\end{corollary}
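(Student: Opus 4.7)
The plan is to derive the corollary by a direct comparison of two already-established characterizations. Theorem~\ref{4} tells us that $f = f_1 + \alpha f_2$ is a null polynomial on $R[\alpha]$ precisely when the three polynomials $f_1$, $f_1'$, and $f_2$ in $R[x]$ are all null on $R$. I want to show this tripartite condition is the same as the conjunction ``$f_1$ is null on $R[\alpha]$'' and ``$\alpha f_2$ is null on $R[\alpha]$,'' which would give the desired equivalence.

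First I would apply Lemma~\ref{31}(1) to $f_1 \in R[x]$: this says $f_1$ is null on $R[\alpha]$ iff both $f_1$ and $f_1'$ are null on $R$. Then I would apply Lemma~\ref{31}(2) to $f_2$: this says $\alpha f_2$ is null on $R[\alpha]$ iff $f_2$ is null on $R$. Combining these two, the condition ``$f_1$ and $\alpha f_2$ are both null on $R[\alpha]$'' is equivalent to ``$f_1$, $f_1'$, and $f_2$ are null on $R$.''

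Finally, I would invoke Theorem~\ref{4} to identify this latter condition with ``$f$ is null on $R[\alpha]$,'' completing the proof. There is no real obstacle here; the corollary is essentially a bookkeeping statement that repackages Theorem~\ref{4} using the two parts of Lemma~\ref{31} as translators between ``null on $R$'' and ``null on $R[\alpha]$'' for polynomials of the special forms $f_1$ and $\alpha f_2$. The only thing worth double-checking is that the decomposition $f = f_1 + \alpha f_2$ is unique with $f_1, f_2 \in R[x]$ (which it is, coefficient-wise, because $\{1, \alpha\}$ is a free $R$-basis of $R[\alpha]$), so that speaking of ``$f_1$'' and ``$\alpha f_2$'' as separate polynomials in $R[\alpha][x]$ is unambiguous.
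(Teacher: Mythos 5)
Your proof is correct and follows exactly the route the paper intends: the paper states this corollary with the remark ``Combining Lemma~\ref{31} with Theorem~\ref{4} gives the following criterion,'' which is precisely your argument of translating each of the two conditions via the two parts of Lemma~\ref{31} and matching the result against the three-part characterization in Theorem~\ref{4}. Nothing is missing.
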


Also from Theorem~\ref{4}, we obtain a criterion that we will
frequently use when two polynomials induce the same polynomial
function on the ring of dual numbers.
\begin{corollary}\label{6} \label{Gencount}
	Let $f =f_1 + \alfa f_2$ and $g=g_1+\alfa g_2$, with
	$f_1, f_2, g_1, g_2 \in R[x]$.
	\smallskip
	
	$f \quv g$ on $\Ralfa$ if and only if the following three
	conditions hold:
	\begin{enumerate}
		\item[\rm (1)]
		$[f_1]_R= [g_1]_R$
		\item[\rm (2)]
		$[f_1']_R = [g_1']_R$
		\item[\rm (3)]
		$[f_2]_R =[g_2]_R$.
	\end{enumerate}
	\medskip
	
	In other words, $f \quv g$ on $\Ralfa$ if and only if
	the following two congruences hold:
	\begin{enumerate}
		\item[\rm (1)]
		$f_1 \equiv g_1  \mod \Nulld$
		\item[\rm (2)]
		$f_2 \equiv g_2  \mod \Null$.
	\end{enumerate}
\end{corollary}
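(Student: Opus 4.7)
The plan is to deduce this corollary directly from Theorem~\ref{4} applied to the difference $f-g$. Since $f \quv g$ on $\Ralfa$ means precisely that $f-g$ is a null polynomial on $\Ralfa$, and since $f-g = (f_1-g_1) + \alfa(f_2-g_2)$ is the canonical decomposition of $f-g$ with coordinate polynomials in $R[x]$, Theorem~\ref{4} will apply verbatim with $f_1-g_1$ in place of $f_1$ and $f_2-g_2$ in place of $f_2$.

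First I would observe that $\quv$ on $\Ralfa$ is a congruence relation on $\Ralfa[x]$ compatible with addition, so $f \quv g$ on $\Ralfa$ is equivalent to $f-g \quv 0$ on $\Ralfa$. Then I would apply Theorem~\ref{4} to conclude that this holds if and only if $f_1-g_1$, $(f_1-g_1)'$, and $f_2-g_2$ are all null polynomials on $R$. Since the formal derivative is $R$-linear, $(f_1-g_1)' = f_1' - g_1'$. Translating ``null polynomial on $R$'' back into the language of induced functions via Definition~\ref{020} gives precisely conditions~(1), (2), and (3) of the corollary.

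For the reformulation as two congruences, I would simply unpack the definitions of $\Null$ and $\Nulld$ from Definition~\ref{020}. Conditions (1) and (2) together say that both $f_1-g_1$ and $(f_1-g_1)'$ induce the zero function on $R$, which is exactly the statement that $f_1-g_1 \in \Nulld$, i.e.\ $f_1 \equiv g_1 \mod \Nulld$. Condition (3) says that $f_2-g_2$ induces the zero function on $R$, which is exactly $f_2 \equiv g_2 \mod \Null$.

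There is no genuine obstacle here; the work has already been done in Theorem~\ref{4}, and the only thing to watch is the bookkeeping that merges the pair of conditions ``$f_1 - g_1 \in \Null$ and $(f_1-g_1)' \in \Null$'' into the single condition $f_1-g_1 \in \Nulld$, which is immediate from the definition of $\Nulld$.
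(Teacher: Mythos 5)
Your proof is correct and matches the paper's (implicit) argument: the paper derives this corollary directly from Theorem~\ref{4} applied to the difference $f-g$, exactly as you do. The bookkeeping with linearity of the formal derivative and the definitions of $\Null$ and $\Nulld$ is all that is needed, and you handle it properly.
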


We use this criterion to exhibit a polynomial with coefficients
in $R$ that induces the zero function on $R$, but not on $\Ralfa$.
\begin{example}
	Let $R=\mathbb{Z}_{p^n}$ and $n<p$. Then the polynomial
	$(x^p-x)^n$ is a null polynomial on $R$, but not on $\Ralfa$.
	Likewise, $x + (x^p-x)^n$ induces the identity function on $R$,
	but not on $\Ralfa$.
	
	To see that $x \nquv x+(x^p-x)^n$ on $R[\alfa]$, we use Corollary~\ref{6}.
	Note that
	\[
	(x+(x^p-x)^n)'=1+n(x^p-x)^{n-1}(px^{p-1}-1)
	\not\equiv 1= x' \bmod \Null.
	\]
	Hence $x\not\equiv x+(x^p-x)^n \mod \Nulld$, although
	$x\equiv x+(x^p-x)^n \mod \Null$.
	
	In a more positive vein, Corollary~\ref{6} implies that
	$x \quv x+(x^p-x)^n\alfa$ on $R[\alfa]$.
\end{example}	
\begin{remark}\label{bijection}
	Let $R$ be a finite commutative ring and $f_1, f_2\in R[x]$.  Then
	\[
	[f_1 + \alpha f_2]_{\Ralfa} \mapsto (([f_1]_R, [f_1']_R), [f_2]_R)
	\]
	establishes a well-defined bijection
	\[
	\varphi\colon \PolFun[\Ralfa] \rightarrow
	\{(G,H)\in \PolFun\times\PolFun\mid
	\exists g\in R[x] \text{ with }  G=[g]\text{ and } H=[g']\}
	\times\PolFun
	\]
	between polynomial functions on $\Ralfa$ on one hand, and triples
	of polynomial functions on $R$ such that the first two entries
	arise from a polynomial and its derivative, on the other hand.
	
	This mapping is well-defined and injective by Corollary~\ref{Gencount},
	and it is clearly onto.
\end{remark}
\begin{proposition}\label{connection}
	Let $R$ be a finite commutative ring, and let
	$\Null$ and $\Nulld$ be the ideals of Definition~\ref{020}.
	Then the number of polynomial functions on
	$\Ralfa$ is
	\[
	|\PolFun[\Ralfa]|= \big[R[x]:\Nulld \big]\big[R[x]:\Null\big].
	\]
	
	Moreover, the factors on the right have the following interpretations.
	\begin{enumerate}
		\item[\rm (1)]
		$\left[R[x]:\Nulld\right]$ is the number of pairs of
		functions $(F,E)$ with $F\colon R\rightarrow R$,
		$E\colon R\rightarrow R$, arising as $([f], [f'])$ for some
		$f\in R[x]$.
		\item[\rm (2)]
		$\left[R[x]:\Nulld\right]$ is also the number of functions
		induced on $\Ralfa$ by polynomials in $R[x]$.
		\item[\rm (3)]
		$\left[R[x]:\Null\right]$ is the number of
		polynomial functions on $R$.
	\end{enumerate}
\end{proposition}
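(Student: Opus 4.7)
The plan is to derive the counting formula as an immediate corollary of Corollary~\ref{Gencount}. Every polynomial in $\Ralfa[x]$ has a unique representation $f = f_1 + \alfa f_2$ with $f_1, f_2 \in R[x]$, so every polynomial function on $\Ralfa$ is induced by some such pair. Corollary~\ref{Gencount} says that $f_1 + \alfa f_2 \quv g_1 + \alfa g_2$ on $\Ralfa$ holds exactly when $f_1 \equiv g_1 \pmod{\Nulld}$ and $f_2 \equiv g_2 \pmod{\Null}$. Consequently, the assignment
\[
[f_1 + \alfa f_2]_{\Ralfa} \;\longmapsto\; (f_1 + \Nulld,\; f_2 + \Null)
\]
is a well-defined bijection between $\PolFun[\Ralfa]$ and the Cartesian product $R[x]/\Nulld \times R[x]/\Null$ (this is essentially the bijection $\varphi$ of Remark~\ref{bijection}, repackaged as cosets). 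Taking cardinalities yields the product formula $|\PolFun[\Ralfa]| = [R[x]:\Nulld]\,[R[x]:\Null]$.

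For the three interpretations, item (3) is just Remark~\ref{nullinpolf} applied to $R$. For (1), the definition of $\Nulld$ forces $f \equiv g \pmod{\Nulld}$ to be equivalent to the conjunction $[f]_R = [g]_R$ and $[f']_R = [g']_R$, so the cosets of $\Nulld$ in $R[x]$ are in bijection with the pairs $(F,E) = ([f]_R,[f']_R)$ that actually arise from some $f \in R[x]$. For (2), Lemma~\ref{3}(2) tells us that any $f \in R[x]$ induces the function $(a+b\alfa) \mapsto f(a) + bf'(a)\alfa$ on $\Ralfa$, which is manifestly determined by the pair $([f]_R, [f']_R)$; conversely, specializing to $b=0$ and $b=1$ recovers $[f]_R$ and $[f']_R$ from the function, so the pairs of (1) are in bijection with the functions on $\Ralfa$ induced by polynomials from $R[x]$.

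There is no real obstacle here: the conceptual work has been carried out in Theorem~\ref{4}, Corollary~\ref{Gencount}, and Remark~\ref{bijection}, and the present proposition is essentially a reformulation of that content in the language of ideal indices together with the three concrete interpretations.
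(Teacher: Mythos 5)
Your proposal is correct and follows essentially the same route as the paper: the paper packages the bijection as the first isomorphism theorem applied to the additive-group epimorphism $(f_1,f_2)\mapsto[f_1+\alfa f_2]$ with kernel $\Nulld\times\Null$ (via Theorem~\ref{4}), while you invoke Corollary~\ref{Gencount} directly to get the coset-level bijection, which is the same argument in different clothing. Your treatments of interpretations (1)--(3) likewise match the paper's.
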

\begin{proof}
	Everything follows from Theorem~\ref{4}.
	In detail, consider the map $\varphi$ defined by
	\[
	\varphi\colon R[x]\times R[x] \rightarrow \mathcal{F}(R[\alfa]),
	\quad \varphi(f_1,f_2)=[f_1 + \alfa f_2 ],
	\]
	where $[f_1 + \alfa f_2 ]$ is the function
	induced on $\Ralfa$ by $f =f_1 + \alfa f_2$.
	Since every polynomial  function on $\Ralfa$ is induced by
	a polynomial $f=f_1+\alfa f_2$ with $f_1,f_2\in R[x]$,
	$\varphi$ is onto. Clearly, $\varphi$ is a homomorphism of the
	additive groups on each side.  By Theorem~\ref{4},
	$\ker\varphi=\Nulld \times \Null$. Hence, by the first
	isomorphism theorem,
	\[
	\bar\varphi\colon R[x]/\Nulld \times R[x]/\Null \rightarrow
	\mathcal{F}(R[\alfa])
	\]
	defined by $\bar\varphi(f_1 + \Nulld, f_2 + \Null) = [f_1 + \alfa f_2 ]$
	is a well defined group isomorphism.
	
	Likewise, for (1) let
	\[
	\mathcal{A}=\{(F,E)\in \mathcal{F}(R)\times \mathcal{F}(R)\mid
	\exists f\in R[x] \text{ with } [f]=F \text{ and } [f'] = E\},
	\]
	and define $\psi\colon R[x] \rightarrow \mathcal{A}$ by
	$\psi(f)=([f]_R,[f']_R)$. Then $\psi$ is a group epimorphism
	with $\ker\psi=N'_R$ and  hence $[R[x]:N'_R]=|\mathcal{A}|$.
	
	Finally, (2) follows from Corollary~\ref{6}, and
	(3) is obvious.
\end{proof}

Proposition~\ref{connection} reduces the question of counting
polynomial functions on $\Ralfa$ to determining $[R[x]:\Null]$
and $[R[x]:\Nulld]$, that is, to counting polynomial functions
on $R$ and pairs of polynomial functions on $R$ induced by a
polynomial and its derivative.
This will allow us to give explicit formulas for
$|\PolFun[\Ralfa]|$ in the case where $R=\mathbb{Z}_{p^n}$
with $n\le p$ in section~\ref{sec6}.

The simple case where $R$ is a finite field we can settle
right away by recalling from Example~\ref{insexamp} that
$\Null[\mathbb{F}_q]=(x^q-x)\mathbb{F}_q[x]$ and
$\Nulld[\mathbb{F}_q]=(x^q-x)^2\mathbb{F}_q[x]$
and hence
$[\mathbb{F}_q[x]:N'_{\mathbb{F}_q} ] = q^{2q} $
and $[\mathbb{F}_q[x]:N_{\mathbb{F}_q}] = q$.
\begin{corollary}\label{fieldfunc}
	Let $\mathbb{F}_q$ be a field with $q$ elements. Then
	$ |\PolFun[ {\mathbb{F}_q[\alpha]} ]| = q^{3q} $.
\end{corollary}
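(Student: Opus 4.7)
The plan is to apply Proposition~\ref{connection} in the case $R=\mathbb{F}_q$, which reduces the counting of polynomial functions on $\mathbb{F}_q[\alpha]$ to the computation of the two indices $[\mathbb{F}_q[x]:\Null[\mathbb{F}_q]]$ and $[\mathbb{F}_q[x]:\Nulld[\mathbb{F}_q]]$. Both of these quantities have, in fact, already been extracted in the excerpt: Example~\ref{insexamp} records $\Null[\mathbb{F}_q]=(x^q-x)\mathbb{F}_q[x]$, $\Nulld[\mathbb{F}_q]=(x^q-x)^2\mathbb{F}_q[x]$, and $[\mathbb{F}_q[x]:\Nulld[\mathbb{F}_q]]=q^{2q}$.

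First I would note that, since $x^q-x$ is a monic polynomial of degree $q$ in $\mathbb{F}_q[x]$, each residue class modulo the principal ideal $(x^q-x)\mathbb{F}_q[x]$ has a unique representative of degree less than $q$, so $[\mathbb{F}_q[x]:\Null[\mathbb{F}_q]]=q^q$. (This also matches the classical fact that every function $\mathbb{F}_q\to\mathbb{F}_q$ is polynomial, hence $|\PolFun[\mathbb{F}_q]|=q^q$.) Combined with the already recorded index $[\mathbb{F}_q[x]:\Nulld[\mathbb{F}_q]]=q^{2q}$, Proposition~\ref{connection} yields
\[
|\PolFun[\mathbb{F}_q[\alpha]]|=[\mathbb{F}_q[x]:\Nulld[\mathbb{F}_q]]\cdot[\mathbb{F}_q[x]:\Null[\mathbb{F}_q]]=q^{2q}\cdot q^q=q^{3q}.
\]

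There is essentially no obstacle here: the only non-trivial input, namely the computation of $[\mathbb{F}_q[x]:\Nulld[\mathbb{F}_q]]=q^{2q}$, is done in Example~\ref{insexamp} using the fact that $g\in\Nulld[\mathbb{F}_q]$ forces $g=h\cdot(x^q-x)$ with $h\in\Null[\mathbb{F}_q]$, so that $\Nulld[\mathbb{F}_q]=(x^q-x)^2\mathbb{F}_q[x]$ and hence its index is $q^{\deg(x^q-x)^2}=q^{2q}$. The corollary therefore amounts to a one-line substitution into Proposition~\ref{connection}.
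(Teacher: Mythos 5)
Your proof is correct and is essentially the paper's own argument: the text preceding the corollary derives it by substituting the indices $[\mathbb{F}_q[x]:\Nulld[\mathbb{F}_q]]=q^{2q}$ and $[\mathbb{F}_q[x]:\Null[\mathbb{F}_q]]=q^{q}$ from Example~\ref{insexamp} into Proposition~\ref{connection}. (Note the paper's preceding paragraph misprints the latter index as $q$; your value $q^{q}$ is the correct one needed to obtain $q^{3q}$.)
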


The remainder of this section is devoted to null polynomials
of minimal degree and canonical representations of polynomial
functions on $\Ralfa$ that can be derived from them.
\begin{proposition}\label{sur}
	Let $h_1\in R[\alfa][x]$ and $h_2\in R[x]$ be monic null polynomials
	on $\Ralfa$ and $R$, respectively, with $\deg h_1= d_1$
	and  $\deg h_2=d_2$.
	
	Then every polynomial function
	$F\colon\Ralfa\rightarrow \Ralfa$ is induced by a polynomial
	$f =f_1 +f_2 \alfa$ with $f_1,f_2 \in R[x]$
	such that $\deg f_1 <d_1$ and  $\deg   f_2 < \min(d_1,d_2)$.
	
	In the special case where $F$ is induced by a polynomial $f\in R[x]$
	and, also, $h_1$ is in $R[x]$, there exists a polynomial $g\in R[x]$
	with $\deg g<d_1$, such that $[g]_R=[f]_R$ and $[g']_R=[f']_R$.
\end{proposition}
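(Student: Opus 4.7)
The plan is to use polynomial division by monic null polynomials separately on the ``real'' and ``$\alpha$'' parts of any polynomial inducing $F$, and then invoke Corollary~\ref{6} to verify that these reductions do not alter the induced function on $\Ralfa$.

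Before dividing, I would first replace $h_1$ by a null polynomial lying in $R[x]$ of the same degree. Writing $h_1 = h_{1,1} + \alpha h_{1,2}$ with $h_{1,1}, h_{1,2} \in R[x]$, the fact that $h_1$ is monic of degree $d_1$ in $R[\alpha][x]$ forces $h_{1,1}$ to be monic of degree $d_1$ (and $\deg h_{1,2} < d_1$). Theorem~\ref{4} applied to $h_1$ yields that $h_{1,1}$, $h_{1,1}'$ and $h_{1,2}$ are all null on $R$, so Lemma~\ref{31}(1) says $h_{1,1}$ is itself a monic null polynomial on $\Ralfa$ of degree $d_1$, now living in $R[x]$. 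From this point on I may assume $h_1 \in R[x]$, and crucially both $[h_1]_R = 0$ and $[h_1']_R = 0$.

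Now pick any $f \in R[\alpha][x]$ inducing $F$ and write it uniquely as $f = f_1 + \alpha f_2$ with $f_1, f_2 \in R[x]$. Polynomial division (legal because $h_1$ is monic) yields $f_1 = q\, h_1 + r_1$ with $\deg r_1 < d_1$. Clearly $[f_1]_R = [r_1]_R$; applying the product rule to $(q\,h_1)' = q'\,h_1 + q\,h_1'$ and using $[h_1]_R=0=[h_1']_R$ also gives $[f_1']_R = [r_1']_R$. Next, let $h$ be whichever of $h_1, h_2$ has the smaller degree (both are monic polynomials null on $R$) and divide $f_2 = q'\,h + r_2$ with $\deg r_2 < \min(d_1, d_2)$, so $[f_2]_R = [r_2]_R$. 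By Corollary~\ref{6}, $r_1 + \alpha r_2$ induces the same function on $\Ralfa$ as $f$, proving the main claim. The special case follows by the same argument applied to $f$ alone: when $f \in R[x]$ and $h_1 \in R[x]$, the division $f = q\,h_1 + g$ with $\deg g < d_1$ gives $[g]_R = [f]_R$ and $[g']_R = [f']_R$.

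The main obstacle, though modest, is the derivative condition on $f_1$: the reduction $[f_1']_R = [r_1']_R$ requires $h_1'$ (not just $h_1$) to be null on $R$, which is precisely why $h_1$ must be null on $\Ralfa$ rather than merely on $R$, and why the preliminary step of descending $h_1$ from $R[\alpha][x]$ to $R[x]$ via Theorem~\ref{4} is indispensable.
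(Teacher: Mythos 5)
Your proof is correct. It reaches the same conclusion by the same basic tools (division with remainder by monic null polynomials plus Corollary~\ref{6}), but the mechanics differ from the paper's in one notable way: the paper divides the inducing polynomial $g$ by $h_1$ directly in $\Ralfa[x]$ -- the remainder $r$ then automatically induces $F$ because the null polynomials on $\Ralfa$ form an ideal of $\Ralfa[x]$ -- and only afterwards splits $r=r_1+\alfa r_2$ and reduces $r_2$ modulo $h_2$. You instead first descend $h_1$ to a monic polynomial $h_{1,1}\in R[x]$ that is null on $\Ralfa$ (via Theorem~\ref{4} and Lemma~\ref{31}), and then carry out all divisions coordinate-wise inside $R[x]$, verifying the three conditions of Corollary~\ref{6} one by one. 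The paper's route is shorter, since one division in $\Ralfa[x]$ disposes of the conditions on $[f_1]_R$ and $[f_1']_R$ simultaneously; your route costs the extra descent step but makes explicit exactly where the hypothesis that $h_1$ is null on $\Ralfa$ (rather than merely on $R$) is used, namely to guarantee $[h_{1,1}']_R=0$ so that $[f_1']_R=[r_1']_R$ survives the reduction -- a point the paper leaves implicit. Both arguments handle the special case identically.
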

\begin{proof}
	Let $g \in \Ralfa[x]$ be a polynomial that induces $F$.
	By division with remainder by $h_1$, we get
	$g(x) =q(x)h_1(x)+r(x)$ for some $r,q \in \Ralfa[x]$, where
	$\deg r < d_1$ and $r(x)$ induces $F$.
	
	We represent $r$ as $r=r_1+ \alfa r_2 $ with $r_1,r_2\in R[x]$.
	Clearly, $\deg r_1,\deg r_2 < d_1$. If $d_2<d_1$, then, we divide $r_2$
	by $h_2$ with remainder in $R[x]$ and get $f_2\in R[x]$ with
	$\deg f_2 < d_2$ and such that $f_2 \quv r_2$ on $R$.
	
	By Corollary~\ref{6}, $\alfa r_2 \quv \alfa f_2$ on $\Ralfa$ and
	hence, $f =r_1 + \alfa f_2$ has the desired properties.
	
	In the special case, the existence
	of $g\in R[x]$ with $\deg g<d_1$ such that	$f \quv g $ on
	$\Ralfa$ follows by a similar argument.
	By Corollary~\ref{Gencount}, $[g]_R=[f]_R$
	and $[g']_R=[f']_R$.
\end{proof}

In what follows, let $m,n$  be positive integers such that $m>1$ and
$p$ a prime.
\begin{definition}\label{mudef}
	For $m\in\mathbb{N}$ let $\mu(m)$ denote the smallest positive integer
	$k$ such that $m$ divides $k!$. The function
	$\mu\colon \mathbb{N}\rightarrow\mathbb{N}$ was introduced
	by  Kempner~\cite{func}.
\end{definition}

When $n\le p$, clearly $\mu(p^n)=np$.
We use this fact frequently, explicitly and sometimes implicitly.
\begin{remark}\label{01}
	It is easy to see that $m$ divides the product of any $\mu(m)$
	consecutive integers.
	
	As Kempner \cite{Residue} remarked, it follows that
	for any $c\in \mathbb{Z}$,
	\[
	(x-c)_{\mu(m)}=\prod_{j=0}^{\mu(m)-1}(x-c-j)
	\]
	is a null polynomial on $\mathbb{Z}_m$.
\end{remark}
\begin{theorem}\label{05}
	Let $m> 1$.  Then
	\begin{enumerate}
		\item[\rm (1)]
		$(x)_{2\mu(m)}$ is a null polynomial on $\Zmalfa$
		\item[\rm (2)]
		$((x)_{\mu(m)})^2$ is a null polynomial on $\Zmalfa$.
	\end{enumerate}
\end{theorem}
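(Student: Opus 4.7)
The strategy is to apply Lemma~\ref{31}(1): a polynomial $f\in \mathbb{Z}_m[x]$ is a null polynomial on $\Zmalfa$ if and only if both $f$ and its formal derivative $f'$ are null polynomials on $\mathbb{Z}_m$. So in each part it suffices to verify these two conditions on $\mathbb{Z}_m$, with Remark~\ref{01} as the key input: for every $c\in \mathbb{Z}$, $(x-c)_{\mu(m)}$ is null on $\mathbb{Z}_m$, because its value at any integer is a product of $\mu(m)$ consecutive integers.

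I would first extract a small meta-observation from Lemma~\ref{31}(1) together with the Leibniz rule: \emph{if $g,h\in \mathbb{Z}[x]$ are both null on $\mathbb{Z}_m$, then the product $gh$ is null on $\Zmalfa$.} Indeed $[gh]$ is clearly zero on $\mathbb{Z}_m$, and $(gh)' = g'h + gh'$ vanishes at every integer because each summand carries a factor (either $g$ or $h$) that is null on $\mathbb{Z}_m$. Both statements of the theorem are then immediate corollaries.

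For (1), I would use the factorization
\[
(x)_{2\mu(m)} \;=\; (x)_{\mu(m)}\cdot (x-\mu(m))_{\mu(m)},
\]
where both factors are null on $\mathbb{Z}_m$ by Remark~\ref{01} (applied with $c=0$ and $c=\mu(m)$, respectively), and invoke the meta-observation. For (2), I would take $g=h=(x)_{\mu(m)}$, once more null on $\mathbb{Z}_m$ by Remark~\ref{01}, and apply the meta-observation to conclude that $\bigl((x)_{\mu(m)}\bigr)^2$ is null on $\Zmalfa$.

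No serious obstacle is expected: the proof is a clean combination of Lemma~\ref{31}, Remark~\ref{01}, and the product rule. The one conceptual point worth emphasizing is that the exponent $2\mu(m)$ in part~(1) is tailored precisely so that $(x)_{2\mu(m)}$ splits as a product of two falling factorials of length $\mu(m)$; this splitting is what forces the derivative to inherit divisibility by $m$ through the Leibniz rule.
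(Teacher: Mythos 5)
Your proof is correct and takes essentially the same route as the paper: reduce via Lemma~\ref{31}(1) to showing $f$ and $f'$ are null on $\mathbb{Z}_m$, then use the product rule together with Remark~\ref{01}. Your packaging via the factorization $(x)_{2\mu(m)}=(x)_{\mu(m)}\cdot(x-\mu(m))_{\mu(m)}$ and the Leibniz rule on two null factors is just a tidier organization of the paper's term-by-term expansion of $f'$, in which each summand retains a run of $\mu(m)$ consecutive linear factors.
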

\begin{proof}
	Set $f(x)=(x)_{2\mu(m)}$. In view of Lemma~\ref{31}, we must
	show that $f \text{ and }  f'$ are null polynomials on
	$\mathbb{Z}_{m}$. Clearly, $f$ is a null polynomial on
	$\mathbb{Z}_m$. Now consider
	$f'(x)=\sum_{i=0}^{2\mu(m)-1}\frac{(x)_{2\mu(m)}}{x-i}$.
	Each term $\frac{(x)_{2\mu(m)}}{x-i}$ is divisible by a polynomial
	of the form $\prod_{j=0}^{\mu(m)-1}(x-c-j)$. Thus
	$\frac{(x)_{2\mu(m)}}{x-i}$ is a null polynomial on $\mathbb{Z}_m$
	by Remark \ref{01}. Hence $f'$ is a null polynomial on
	$\mathbb{Z}_{m}$. The proof of the second statement is similar.
\end{proof}

In the case when $m=p^n$, $(x)_{2\mu(p^n)}$ is a null polynomial on
$\Zmalfa[p^n]$. When $n\le p$, this says $(x)_{2np}$ is a null
polynomial on $\Zmalfa[p^n]$, but in this case more is true, namely,
$(x)_{\mu (p^n)+p}=(x)_{(n+1)p}$ is  a null polynomial on
$\Zmalfa[p^n]$.
\begin{proposition}\label{5}
	Let $n\le p$. Then $(x)_{(n+1)p}$
	is a null polynomial on $\Zmalfa[p^n]$.
\end{proposition}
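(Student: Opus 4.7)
The plan is to apply Lemma~\ref{31} to reduce the claim to showing that both $(x)_{(n+1)p}$ and its formal derivative induce the zero function on $\mathbb{Z}_{p^n}$.  The key structural observation is the block factorization
\[
(x)_{(n+1)p} \;=\; \prod_{k=0}^{n} (x-kp)_p,
\]
immediate from matching leading coefficients and roots; this groups the $(n+1)p$ consecutive falling-factorial factors into $n+1$ blocks of length $p$.

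First, I would observe that each block $(x-kp)_p$, evaluated at an integer, is a product of $p$ consecutive integers and therefore divisible by $p$ (indeed by $p!$).  Multiplying over $k=0,\ldots,n$ yields divisibility of $(x)_{(n+1)p}$ by $p^{n+1}$, and in particular by $p^n$, so $(x)_{(n+1)p} \quv 0$ on $\mathbb{Z}_{p^n}$.

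For the derivative, I would apply the product rule to the same factorization:
\[
\bigl((x)_{(n+1)p}\bigr)' \;=\; \sum_{k=0}^{n}\bigl((x-kp)_p\bigr)' \prod_{l\ne k}(x-lp)_p.
\]
In each summand the untouched product has $n$ factors, each divisible by $p$ at every integer input by the same elementary fact, so the summand is divisible by $p^n$.  Summing preserves this, and the derivative is a null polynomial on $\mathbb{Z}_{p^n}$ as well.  Lemma~\ref{31} then delivers the conclusion.

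The hypothesis $n\le p$ does not surface explicitly but is essential for sharpness: it is exactly the regime where $\mu(p^n)=np$, so the polynomial degree $(n+1)p$ just suffices to provide one extra length-$p$ block beyond the minimum needed.  No real obstacle is anticipated --- once the block factorization is in hand, both divisibility statements collapse to one-line applications of the product rule together with the trivial observation that any $p$ consecutive integers have product divisible by $p$.
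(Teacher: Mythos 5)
Your proof is correct and follows essentially the same route as the paper: the factorization $(x)_{(n+1)p}=\prod_{k=0}^{n}(x-kp)_p$ into $n+1$ blocks of $p$ consecutive linear factors is exactly the paper's decomposition, and your product-rule computation (each summand retains $n$ intact blocks, each divisible by $p$ at every integer) is the same observation the paper makes by noting that each term $\frac{(x)_{(n+1)p}}{x-i}$ is divisible by $n$ of the blocks. Your closing remark that the hypothesis $n\le p$ is not needed for the divisibility argument itself, but only makes the bound $(n+1)p=\mu(p^n)+p$ meaningful, is also accurate.
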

\begin{proof}
	Since $n\le p$, we have $\mu (p^n)=np$. Set $f(x)=(x)_{\mu (p^n)+p}$.
	Then clearly, $f$ is a null polynomial on $\mathbb{Z}_{p^n}$.
	We represent $f(x)$ as a product of $n+1$ polynomials, each of which
	has $p$ consecutive integers as roots and is, therefore, a null-polynomial
	modulo $p$:
	\[
	(x)_{(n+1)p}=\prod_{l=0}^{n}\prod_{k=lp}^{(l+1)p-1}(x-k).
	\]
	
	Now regarding $f'(x)=\sum_{i=0}^{(n+1)p-1}\frac{(x)_{(n+1)p}}{x-i}$,
	it becomes apparent that each term $\frac{(x)_{(n+1)p}}{x-i}$ is
	divisible by a product of $n$ different polynomials of the form
	$\prod_{j=0}^{p-1}(x-c-j)$. Hence the  claim follows.
\end{proof}

Combining Theorem~\ref{05} with Proposition~\ref{sur} and Remark~\ref{01},
we obtain the following corollary, which will be needed to establish a
canonical form for a polynomial representation of a polynomial function
on $\Zmalfa[p^n]$ for $n\le p$ (see Theorems~\ref{8} and~\ref{canon}).
\begin{corollary}\label{7}
	Let $F\colon\Zmalfa\rightarrow \Zmalfa$ be a polynomial function.
	Then F can be represented as a polynomial $f \in\Zmalfa[m][x]$ with
	$ \deg f\le 2\mu(m) -1$. Moreover, $ f$ can be chosen such that
	$f =f_1 +f_2 \alfa $, 	with $f_1,f_2\in \mathbb{Z}_m[x]$,
	$\deg f_1\le 2\mu(m) -1$ and $\deg f_2 \le \mu(m) -1$.
\end{corollary}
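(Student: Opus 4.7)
The plan is to apply Proposition~\ref{sur} with carefully chosen monic null polynomials $h_1$ on $\Zmalfa$ and $h_2$ on $\mathbb{Z}_m$, drawn from the null polynomials identified earlier in this section.

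First, I would take $h_1(x) = (x)_{2\mu(m)}$, which by Theorem~\ref{05}(1) is a monic null polynomial on $\Zmalfa$, of degree $d_1 = 2\mu(m)$. For $h_2$, I would take $h_2(x) = (x)_{\mu(m)} = \prod_{j=0}^{\mu(m)-1}(x-j)$, which is monic and, by Remark~\ref{01}, a null polynomial on $\mathbb{Z}_m$, of degree $d_2 = \mu(m)$. Note $d_2 < d_1$, so in the bound $\min(d_1,d_2)$ appearing in Proposition~\ref{sur}, we get $\min(d_1,d_2) = \mu(m)$.

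Now I invoke Proposition~\ref{sur}: every polynomial function $F \colon \Zmalfa \rightarrow \Zmalfa$ is induced by a polynomial $f = f_1 + \alfa f_2$ with $f_1, f_2 \in \mathbb{Z}_m[x]$ satisfying $\deg f_1 < d_1 = 2\mu(m)$ and $\deg f_2 < \min(d_1,d_2) = \mu(m)$. This gives the coordinate degree bounds $\deg f_1 \le 2\mu(m) - 1$ and $\deg f_2 \le \mu(m) - 1$ claimed in the second sentence of the statement.

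Finally, for the degree bound on $f$ itself as an element of $\Zmalfa[m][x]$, observe that $f = f_1 + \alfa f_2$, so $\deg f \le \max(\deg f_1,\deg f_2) \le 2\mu(m) - 1$, yielding the first claim. There is essentially no obstacle here: the work has already been done in Theorem~\ref{05}, Remark~\ref{01}, and Proposition~\ref{sur}, and this corollary is just the bookkeeping that assembles those ingredients.
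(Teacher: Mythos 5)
Your proof is correct and is exactly the argument the paper intends: the paper offers no separate proof of this corollary, stating only that it follows by ``combining Theorem~\ref{05} with Proposition~\ref{sur} and Remark~\ref{01},'' which is precisely your choice of $h_1=(x)_{2\mu(m)}$ (monic null on $\Zmalfa$ by Theorem~\ref{05}) and $h_2=(x)_{\mu(m)}$ (monic null on $\mathbb{Z}_m$ by Remark~\ref{01}) fed into Proposition~\ref{sur}.
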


When $R=\mathbb{F}_q$ is a finite field, we have already remarked
in Corollary~\ref{fieldfunc}
that the number of polynomial functions on $\mathbb{F}_q[\alfa]$
is $q^{3q}$.
We can make this more explicit by giving a canonical
representation for the different  polynomial functions on
$\mathbb{F}_q[\alfa]$.
\begin{corollary}\label{08.1}
	Let $\mathbb{F}_q$ be a finite field with $q$ elements.
	Every polynomial function
	$F\colon \mathbb{F}_q[\alfa]\rightarrow \mathbb{F}_q[\alfa]$
	can be represented uniquely  as a polynomial
	\begin{equation}\label{reps}
		f(x)= \sum_{i=0}^{2q-1}a_ix^i+
		\sum_{j=0}^{q-1}b_jx^j\alfa\qquad
		\text{ for }a_i,b_j\in \mathbb{F}_q.
	\end{equation}
\end{corollary}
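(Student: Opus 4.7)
The plan is to derive both existence and uniqueness by combining Proposition~\ref{sur} with the explicit descriptions of $\Null[\mathbb{F}_q]$ and $\Nulld[\mathbb{F}_q]$ from Example~\ref{insexamp}, and to apply Corollary~\ref{6} for the uniqueness part.

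For existence, I would invoke Proposition~\ref{sur} with the null polynomials $h_1 = (x^q - x)^2$ on $\mathbb{F}_q[\alpha]$ and $h_2 = x^q - x$ on $\mathbb{F}_q$. That $h_1$ is indeed a null polynomial on $\mathbb{F}_q[\alpha]$ follows from Lemma~\ref{31}(1) together with $(x^q-x)^2 \in \Nulld[\mathbb{F}_q]$, which was the content of Example~\ref{insexamp}(2). With $d_1 = 2q$ and $d_2 = q$, Proposition~\ref{sur} yields a representation $f = f_1 + \alpha f_2$ inducing $F$, where $\deg f_1 < 2q$ and $\deg f_2 < \min(2q, q) = q$. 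This is exactly the shape of~(\ref{reps}).

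For uniqueness, suppose two polynomials $f = \sum_{i=0}^{2q-1} a_i x^i + \alpha \sum_{j=0}^{q-1} b_j x^j$ and $g = \sum_{i=0}^{2q-1} a_i' x^i + \alpha \sum_{j=0}^{q-1} b_j' x^j$ induce the same function on $\mathbb{F}_q[\alpha]$. By Corollary~\ref{6}, the coordinate polynomials satisfy $f_1 \equiv g_1 \bmod \Nulld[\mathbb{F}_q]$ and $f_2 \equiv g_2 \bmod \Null[\mathbb{F}_q]$. Using Example~\ref{insexamp} again, $f_1 - g_1$ is a multiple of $(x^q - x)^2$ and has degree less than $2q$, forcing $f_1 = g_1$; similarly $f_2 - g_2$ is a multiple of $x^q - x$ of degree less than $q$, forcing $f_2 = g_2$. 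Hence all coefficients $a_i, b_j$ agree with $a_i', b_j'$.

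I do not expect serious obstacles here: every ingredient has already been prepared. The only point worth verifying carefully is that the degree bound $\deg f_2 < q$ coming from Proposition~\ref{sur} matches the range $0 \le j \le q-1$ in~(\ref{reps}), and that $\deg f_1 < 2q$ matches $0 \le i \le 2q-1$. As a sanity check, the total number of polynomials of the prescribed form is $q^{2q} \cdot q^q = q^{3q}$, which coincides with $|\PolFun[\mathbb{F}_q[\alpha]]|$ from Corollary~\ref{fieldfunc}; this confirms that the assignment is actually a bijection and provides an alternative route to uniqueness by counting.
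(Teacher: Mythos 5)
Your proposal is correct and follows essentially the same route as the paper: existence via Proposition~\ref{sur} applied to $(x^q-x)^2$ and $(x^q-x)$, and uniqueness either by the direct degree argument using the descriptions of $\Null[\mathbb{F}_q]$ and $\Nulld[\mathbb{F}_q]$ from Example~\ref{insexamp} or by the counting comparison with Corollary~\ref{fieldfunc} --- the paper presents both of these uniqueness arguments, merely in the opposite order of emphasis.
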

\begin{proof}
	We note that the polynomials $(x^q-x)^2$ and $(x^q-x)$ satisfy the
	conditions of Proposition~\ref{sur}. Thus every polynomial function
	on $\mathbb{F}_q[\alfa]$ is represented by a polynomial as in
	Equation~(\ref{reps}).	
	
	Since there are exactly $q^{3q}$ different polynomials of the form
	(\ref{reps}) and also, by Corollary~\ref{fieldfunc}, $q^{3q}$ different
	polynomial functions on $\mathbb{F}_q[\alfa]$, every polynomial function is
	represented uniquely.
	
	We can also show uniqueness directly, without using
	Corollary~\ref{fieldfunc}, by demonstrating that every expression of
	type~(\ref{reps}) representing the zero function is the zero polynomial.
	Let $f\in \mathbb{F}_q[\alfa][x]$ be a null polynomial on
	$\mathbb{F}_q[\alfa]$ with
	$ f(x)=\sum_{i=0}^{2q-1}a_ix^i+ \sum_{j=0}^{q-1}b_jx^j\alfa$.
	
	Then
	$\sum_{i=0}^{2q-1}a_ix^i  \in \Nulld[\mathbb{F}_q] $ and
	$\sum_{j=0}^{ q-1}b_jx^j  \in \Null[\mathbb{F}_q]$ by
	Theorem~\ref{4}. Recalling from Example~\ref{insexamp} that
	$\Nulld[\mathbb{F}_q]=(x^q-x)^2\mathbb{F}_q[x]$ and
	$\Null[\mathbb{F}_q]=(x^q-x)\mathbb{F}_q[x]$, we see that $a_i=0$
	for $i=0,\ldots,2q-1$; and $b_j=0$ for  $j=0,\ldots,q-1$.
\end{proof}

\section{Permutation polynomials on $\Ralfa$}\label{sec3}

We know direct our attention to permutation polynomials on $\Ralfa$,
where $\Ralfa$ is the ring of dual numbers over a finite commutative
ring $R$ (defined in Definition~\ref{001}). As in the previous section,
we first relate properties of polynomials in $\Ralfa[x]$ to properties
of polynomials in $R[x]$, about which more may be known.
\begin{theorem}\label{Genper}
	Let  $R$ be a commutative ring. Let $f =f_1 +\alfa f_2$,
	where $f_1,f_2 \in R[x]$. Then $f$ is a permutation polynomial
	on $R[\alfa]$ if and only if  the following conditions hold:
	\begin{enumerate}
		\item[\rm (1)]
		$f_1$ is a permutation polynomial on $R$
		\item[\rm (2)]
		for all $a\in R$, $f_1'(a)$ is a unit of $R$.
	\end{enumerate}
\end{theorem}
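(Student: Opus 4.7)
The plan is to use the expansion $f(a+b\alfa) = f_1(a) + (bf_1'(a) + f_2(a))\alfa$ from Lemma~\ref{3} to translate the permutation property of $f$ on $\Ralfa$ into statements about $f_1$ and $f_1'$ on $R$. For the ``if'' direction, assuming (1) and (2), given an arbitrary target $c + d\alfa \in \Ralfa$ I would produce its unique preimage in two steps: $f_1(a) = c$ has a unique solution $a \in R$ since $f_1$ permutes $R$, and then $bf_1'(a)+f_2(a) = d$ has the unique solution $b = (d-f_2(a))f_1'(a)^{-1}$ since $f_1'(a)$ is a unit. This shows $f$ is a bijection on $\Ralfa$.

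For the ``only if'' direction, suppose $f$ permutes $\Ralfa$. The projection $\pi\colon \Ralfa \to R$, $\pi(a+b\alfa) = a$, satisfies $\pi(f(a+b\alfa)) = f_1(a)$, so $f$ sends each coset $a+R\alfa$ into the fiber $f_1(a)+R\alfa$, and surjectivity of $f_1$ follows at once from surjectivity of $f$. The image of $a+R\alfa$ under $f$ is the coset $f_1(a) + (f_1'(a)R + f_2(a))\alfa$, a translate of the ideal $f_1'(a)R$ inside that fiber. Using that $f$ is a bijection on all of $\Ralfa$, one deduces that as $a^*$ ranges over $f_1^{-1}(f_1(a))$, the cosets $f_1'(a^*)R + f_2(a^*)$ partition $R$ disjointly. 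In the finite setting that is the ultimate focus of the paper, a cardinality count forces $f_1^{-1}(f_1(a)) = \{a\}$ and $f_1'(a)R = R$, yielding (1) and (2) simultaneously.

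The main obstacle is that (1) and (2) in the ``only if'' direction are interlocked and must be extracted together: having $f_1'(a)R = R$ already forces $f_1^{-1}(f_1(a))$ to be a singleton, and conversely, if $f_1^{-1}(f_1(a)) = \{a\}$ then the image of $a+R\alfa$ must fill the whole fiber, forcing $f_1'(a)$ to be a unit. Once one identifies the right invariant -- the disjoint-coset partition of $R$ mentioned above -- the finite case falls out cleanly by counting. For a genuinely infinite commutative ring one additionally has to rule out the possibility that $R$ is non-trivially partitioned into cosets of possibly distinct proper ideals $f_1'(a^*)R$, which is the delicate part of the general argument but is not needed for the subsequent applications of the theorem.
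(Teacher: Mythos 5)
Your ``if'' direction is essentially the paper's: both rest on the expansion $f(a+b\alfa)=f_1(a)+(bf_1'(a)+f_2(a))\alfa$ from Lemma~\ref{3}; you solve for the unique preimage coordinate by coordinate, while the paper checks only injectivity and lets finiteness supply surjectivity. The ``only if'' direction is where you genuinely diverge. The paper argues pointwise: if $f_1'(a)$ is not a unit then (by finiteness) it is a zerodivisor, so some $b\neq 0$ annihilates it and $f(a+b\alfa)=f(a)$ is an immediate collision; condition (1) then comes from surjectivity of $f_1$ plus finiteness. You instead do global bookkeeping over the fiber $f_1(a)+R\alfa$, noting that the images of the cosets $a^*+R\alfa$ for $a^*\in f_1^{-1}(f_1(a))$ are disjoint and cover the fiber, and you extract (1) and (2) together by counting. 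This route works, but one step must be made explicit before the count closes: the bare fact that the cosets $f_1'(a^*)R+f_2(a^*)$ partition $R$ does \emph{not} by itself force $f_1^{-1}(f_1(a))$ to be a singleton, since a finite ring can be partitioned non-trivially into cosets of a proper ideal (e.g.\ $\mathbb{Z}_4=(2)\,\cup\,(1+(2))$). What rescues the count is that $f$ is injective on each individual coset $a^*+R\alfa$, so each block of your partition has exactly $|R|$ elements; only then does $|R|=|f_1^{-1}(f_1(a))|\cdot|R|$ give $f_1^{-1}(f_1(a))=\{a\}$ and $f_1'(a)R=R$ simultaneously. With that sentence added your argument is complete; the paper's zerodivisor observation reaches the same conclusion with less machinery. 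Your closing remark about infinite $R$ is fair --- the paper's own ``only if'' proof also silently assumes $R$ finite, despite the hypothesis as stated.
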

\begin{proof}
	$(\Rightarrow)$
	To see (1), let $c\in R$. Since $f$ is a
	permutation polynomial on $ R[\alfa]$, there exist
	$a,b\in R$ such that $c=f(a+b\alfa)$, that is,
	$c=f_1(a)+(bf_1'(a)+f_2(a))\alfa$ (by Lemma~\ref{3}).
	In particular, $f_1(a)=c$, and, therefore, $[f_1]_R$ is onto and
	hence a permutation of $R$.
	
	To see (2), let $a\in R$ and suppose that $f_1'(a)$ is not a unit of
	$R$. $R$ being finite, it follows that $f_1'(a)$ is a zerodivisor of $R$.
	Let $b\in R$, $b\ne 0$, such that $bf_1'(a)=0$. Then
	\[
	f(a+b\alfa)=f_1(a)+(bf_1'(a)+f_2(a))\alfa =f_1(a)+f_2(a)\alfa=f(a).
	\]
	So $f$ is not one-to-one; a contradiction.
	
	($\Leftarrow$) Assume (1) and (2) hold.
	It suffices to show that $[f]_{\Ralfa}$ is one-to-one.
	Let $a,b,c,d \in R$ such that
	$f(a+b\alfa)=f(c+d\alfa)$, that is,
	\[
	f_1(a)+(bf_1'(a)+f_2(a))\alfa = f_1(c)+(df_1'(c)+f_2(c))\alfa.
	\]
	Then $f_1(a)= f_1(c)$ and hence $a= c$, by (1).
	Furthermore, $bf_1'(a)= d\/f_1'(a)$, and, since $f_1'(a)$ is
	not a zerodivisor, $b=d$ follows.
\end{proof}

The special case of polynomials with coefficients in $R$ is
so important that we state it separately.

We call a function on $R$ that maps every element of $R$ to a
unit of $R$ a {\em unit-valued} function on~$R$.
\begin{corollary}\label{RcoeffGenper}
	Let  $R$ be a commutative ring and $f\in R[x]$.
	Then $f$ is a permutation polynomial
	on $R[\alfa]$ if and only if the following two conditions hold:
	\begin{enumerate}
		\item[\rm (1)]
		$[f\/]_R$ is a permutation of $R$
		\item[\rm (2)]
		$[f']_R$ is unit-valued.
	\end{enumerate}
\end{corollary}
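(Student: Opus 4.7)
The plan is to deduce Corollary~\ref{RcoeffGenper} as an immediate specialization of Theorem~\ref{Genper}. Given $f\in R[x]$, I would write $f$ as an element of $\Ralfa[x]$ in the form $f = f_1 + \alfa f_2$ with $f_1 = f$ and $f_2 = 0$. Both $f_1$ and $f_2$ are in $R[x]$, so Theorem~\ref{Genper} applies directly and tells us that $f$ is a permutation polynomial on $\Ralfa$ if and only if (a) $f_1$ is a permutation polynomial on $R$, and (b) $f_1'(a)$ is a unit of $R$ for every $a\in R$.

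Translating these conditions under the substitution $f_1 = f$ gives exactly the two conditions of the corollary: condition (a) reads ``$f$ is a permutation polynomial on $R$'', which by Definition~\ref{Algfun} means $[f]_R$ is a permutation of $R$; and condition (b) reads ``$f'(a)$ is a unit of $R$ for every $a\in R$'', which is by definition the statement that $[f']_R$ is unit-valued. Note that the auxiliary polynomial $f_2 = 0$ plays no role, because substituting $f_2 = 0$ into the conclusion of Theorem~\ref{Genper} removes no information from either condition (a) or (b).

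There is really no obstacle here: the corollary is stated precisely to isolate the case of coefficients in $R$ rather than $\Ralfa$, and it is subsumed verbatim by Theorem~\ref{Genper}. The only thing to verify is that the translation of the two conditions goes through unchanged, which it does. Hence the proof would consist of a single sentence invoking Theorem~\ref{Genper} with $f_1 = f$, $f_2 = 0$, and observing that the two conditions coincide with those of the corollary.
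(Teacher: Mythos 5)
Your proposal is correct and is exactly the paper's intent: the corollary is stated as the special case $f = f_1 + \alfa\cdot 0$ of Theorem~\ref{Genper}, and the paper offers no separate proof precisely because the translation of the two conditions is immediate, as you verify.
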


Theorem~\ref{Genper} shows that whether $f=f_1+\alfa f_2\in\Ralfa[x]$
is a permutation polynomial on $R[\alfa]$ depends only on $f_1$.
In particular, $f_1 + \alfa f_2$ is a permutation polynomial
on $R[\alfa]$ if and only if $f_1+\alfa\cdot 0$ is a permutation
polynomial on $R[\alfa]$. We rephrase the last remark as a
corollary.
\begin{corollary}\label{PPfirstcoordinate}
	Let  $R$ be a finite ring. Let $f =f_1 + \alfa f_2$,	where
	$f_1,f_2 \in R[x]$. Then $f$ is a permutation polynomial
	on $R[\alfa]$ if and only if $f_1$ is a permutation polynomial
	on $R[\alfa]$.
\end{corollary}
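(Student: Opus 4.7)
The plan is to apply Theorem~\ref{Genper} twice and exploit the fact that its criterion for being a permutation polynomial on $R[\alfa]$ involves only the first coordinate polynomial.

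First, I would apply Theorem~\ref{Genper} directly to $f = f_1 + \alfa f_2$. This yields the equivalence: $f$ is a permutation polynomial on $R[\alfa]$ if and only if (a) $f_1$ is a permutation polynomial on $R$, and (b) $f_1'(a)$ is a unit of $R$ for every $a \in R$. Crucially, neither condition (a) nor condition (b) mentions $f_2$.

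Next, I would apply Theorem~\ref{Genper} to the polynomial $f_1$, viewed as an element of $R[\alfa][x]$ via the trivial decomposition $f_1 = f_1 + \alfa \cdot 0$, so that its first coordinate polynomial is $f_1$ itself and its second coordinate polynomial is the zero polynomial. The theorem then yields: $f_1$ is a permutation polynomial on $R[\alfa]$ if and only if (a) and (b) hold. Since both equivalences have identical right-hand sides, chaining them gives the stated corollary: $f$ is a permutation polynomial on $R[\alfa]$ if and only if $f_1$ is.

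There is no genuine obstacle to overcome here. The corollary is essentially a restatement of the observation, already noted in the paragraph preceding it, that the permutation criterion in Theorem~\ref{Genper} depends only on $f_1$. The one minor point to verify is that finiteness of $R$ (assumed in the corollary) suffices to apply both directions of Theorem~\ref{Genper}, in particular the forward direction which uses that non-units of a finite commutative ring are zerodivisors.
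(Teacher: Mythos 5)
Your proposal is correct and matches the paper's own justification: the paper derives Corollary~\ref{PPfirstcoordinate} precisely by observing that the criterion of Theorem~\ref{Genper} depends only on $f_1$, and applying it to both $f=f_1+\alfa f_2$ and $f_1=f_1+\alfa\cdot 0$. Your remark about finiteness being needed for the forward direction of Theorem~\ref{Genper} is also consistent with the hypotheses as stated.
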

\begin{corollary} \label{Geperncount}
	Let $R$ be a finite ring and $R^*$ the group of units on $R$.
	Let $B$ denote  the number of pairs of	functions $(H,G)$ with
	\[
	H\colon R\rightarrow R \text{ bijective\qquad  and }\qquad
	G\colon R\rightarrow R^*
	\]
	that occur as $([g],[g'])$ for some $g\in R[x]$. Then  the number
	$|\PrPol[\Ralfa]|$ of polynomial permutations on $\Ralfa$  is equal
	to
	\[
	|\PrPol[\Ralfa]|=B\cdot |\PolFun|.
	\]
\end{corollary}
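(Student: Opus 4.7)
The plan is to derive this counting formula as a direct consequence of the bijection established in Remark~\ref{bijection} together with the characterization of permutation polynomials in Theorem~\ref{Genper} (equivalently, Corollary~\ref{RcoeffGenper}). Recall that Remark~\ref{bijection} gives a bijection
\[
\varphi\colon \PolFun[\Ralfa] \longrightarrow \bigl\{(G,H)\in \PolFun\times\PolFun\mid \exists\, g\in R[x]\text{ with } G=[g],\ H=[g']\bigr\}\times\PolFun,
\]
sending $[f_1+\alpha f_2]_{\Ralfa}$ to $\bigl(([f_1]_R,[f_1']_R),\,[f_2]_R\bigr)$.

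The key observation is that by Theorem~\ref{Genper}, the polynomial $f_1+\alpha f_2$ is a permutation polynomial on $\Ralfa$ if and only if $[f_1]_R$ is a bijection of $R$ and $[f_1']_R$ takes values in $R^*$; both conditions depend only on $f_1$ and not on $f_2$. Therefore, under $\varphi$, the subset $\PrPol[\Ralfa]\subseteq\PolFun[\Ralfa]$ corresponds precisely to those triples $((H,G),F)$ for which $H\colon R\to R$ is bijective, $G\colon R\to R^*$ is unit-valued, the pair $(H,G)$ arises as $([g]_R,[g']_R)$ for some $g\in R[x]$, and $F$ is an arbitrary polynomial function on $R$.

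By definition, the number of admissible pairs $(H,G)$ is exactly $B$, and the number of choices for $F$ is $|\PolFun|$. Since these choices are independent, counting yields
\[
|\PrPol[\Ralfa]| = B\cdot|\PolFun|.
\]
There is no real obstacle here: the only point requiring a moment's thought is verifying that the conditions of Theorem~\ref{Genper} are functional (depend only on the induced functions $[f_1]_R$ and $[f_1']_R$), so that they can be imposed on the first two coordinates of $\varphi$'s image without constraining the third coordinate. This is immediate from the statements of the conditions.
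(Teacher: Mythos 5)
Your proof is correct and follows essentially the same route as the paper: both use the bijection of Remark~\ref{bijection} (which rests on Corollary~\ref{Gencount}) and then apply Theorem~\ref{Genper} to identify the image of $\PrPol[\Ralfa]$ under that bijection as exactly the triples with first coordinate bijective, second coordinate unit-valued, and third coordinate arbitrary. Your explicit remark that the permutation criterion depends only on the induced functions $[f_1]_R$ and $[f_1']_R$ (not on the polynomial $f_1$ itself) is the same point the paper uses implicitly when restricting the bijection.
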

\begin{proof}
	By Corollary~\ref{Gencount} and Remark~\ref{bijection},
	\[
	[f_1 +\alpha f_2]_{\Ralfa} \mapsto ([f_1]_R,[f_1']_R,[f_2]_R)
	\]
	is a bijection between $\PolFun[\Ralfa]$ and triples of polynomial
	functions on $R$ such that the first two entries of the triple arise
	from one polynomial and its derivative.
	
	By Theorem~\ref{Genper}, the restriction of this bijection to
	$\PrPol[\Ralfa]$ is surjective onto the set of those triples
	$([f_1]_R,[f_1']_R,[f_2]_R)$ such that $[f_1]_R$ is bijective and
	$[f_1']_R$ takes values in $R^*$.
\end{proof}

We now introduce a subgroup of the group of polynomial permutations
of a ring of dual numbers that will play an important role in
determining the order of the group.
\begin{definition}\label{std}
	Let
	\[St_{\alfa}(R)=\{F\in \PrPol[\Ralfa]\mid
	F(a)=a \text{ for every } a\in R\}.
	\]
	$St_{\alfa}(R)$, which is clearly a subgroup of $\PrPol[\Ralfa]$,
	is called the pointwise stabilizer (or shortly the stabilizer)
	of $R$ in the group $\mathcal{P}(\Ralfa)$.
\end{definition}
\begin{proposition}\label{firststab}
	Let $R$ be a finite commutative ring. Then
	\[
	St_{\alfa}(R)=\{F\in \PrPol[\Ralfa] \mid
	F \textnormal{ is induced by } x+h(x), \textnormal{ for some }
	h \in \Null[R] \}.
	\]
	In particular, every element of the stabilizer of $R$ can be realized
	by a polynomial in $R[x]$.
\end{proposition}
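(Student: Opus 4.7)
The plan is to verify the two inclusions. The containment $\supseteq$ is immediate: if $F\in\PrPol[\Ralfa]$ is induced by $x+h(x)$ with $h\in\Null$, then by Lemma~\ref{21}(2), for every $a\in R$ we have $F(a)=a+h(a)\alfa\cdot 0 = a+h(a)=a$ (since $h\quv 0$ on $R$), so $F\in St_{\alfa}(R)$.

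For the nontrivial containment $\subseteq$, I would start from $F\in St_\alfa(R)$ together with any polynomial $f\in\Ralfa[x]$ representing $F$. Writing $f=f_1+\alfa f_2$ with $f_1,f_2\in R[x]$ and evaluating on $R$ via Lemma~\ref{21}, I get $F(a)=f_1(a)+f_2(a)\alfa$ for each $a\in R$; comparing this with $F(a)=a$ yields $[f_1]_R=\mathrm{id}_R$ and $f_2\in\Null$.

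The key step is then to observe that the polynomial $g:=f_1\in R[x]$ already induces $F$ on $\Ralfa$. Writing $g=f_1+\alfa\cdot 0$ and applying Corollary~\ref{Gencount}, the equivalence $g\quv f$ on $\Ralfa$ reduces to the three pointwise conditions $[f_1]_R=[f_1]_R$, $[f_1']_R=[f_1']_R$, and $[0]_R=[f_2]_R$; the first two are tautological and the third is precisely our observation that $f_2\in\Null$. The main "insight," which is really the only non-mechanical point, is this: replacing $f_2$ by $0$ is free for representing $F$ because any null polynomial on $R$ contributes nothing to values on $\Ralfa$ when multiplied by $\alfa$.

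To finish, I note that $g$ is automatically a permutation polynomial on $\Ralfa$ (either directly because $g\quv f$ and $f$ is one, or as an instance of Corollary~\ref{PPfirstcoordinate}, since $f$ and $g$ share the same ``first coordinate'' $f_1$). Setting $h:=f_1-x\in R[x]$ gives $g=x+h$, and $h\in \Null$ because $[f_1]_R=\mathrm{id}_R$ forces $h$ to induce the zero function on $R$. The fact that $g\in R[x]$ also delivers the ``In particular'' clause at no extra cost. I do not anticipate any real obstacle; the lemma is essentially a direct corollary of the characterization Corollary~\ref{Gencount} together with the triviality that fixing $R$ pointwise pins down $f_1$ up to $\Null$ and forces $f_2\in\Null$.
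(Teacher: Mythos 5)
Your proposal is correct and follows essentially the same route as the paper: both directions are handled by evaluating $f=f_1+\alfa f_2$ on $R$ to force $[f_1]_R=\mathrm{id}_R$ and $f_2\in\Null$, and then discarding $\alfa f_2$ (via Corollary~\ref{Gencount}, equivalently Lemma~\ref{3}) so that $f_1=x+h$ with $h\in\Null$ already represents $F$. The only cosmetic issue is the garbled expression ``$F(a)=a+h(a)\alfa\cdot 0$'' in the easy inclusion, where you clearly mean $F(a)=a+h(a)+0\cdot(1+h'(a))\alfa=a$.
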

\begin{proof}
	It is clear that
	\[
	St_{\alfa}(R)\supseteq\{F\in \PrPol[\Ralfa] \mid F
	\textnormal{ is induced by } x+h(x), \textnormal{ for some }
	h \in \Null[R] \}.
	\]
	Now, let $F\in \mathcal{P}(\Ralfa)$
	such that $F(a)=a$ for every $a\in R$.
	Then $F$ is represented by $f_1+f_2\alfa$,
	where $f_1, f_2\in R[x]$, and
	$a=F(a)=f_1(a)+f_2(a)\alfa$ for every $a\in R$.
	It follows that $f_2(a)=0$ for every $a\in R$, i.e.,
	$f_2$ is a null polynomial on $R$.
	Thus, $f_1+f_2\alfa\quv f_1$ on $\Ralfa$
	by Lemma~\ref{3}, that is, $F$ is represented by $f_1$.
	Therefore,
	$[f_1]_R=id_{R}$ (since $F$ is the identity on $R$) and, so,
	$f_1(x)=x+h(x)$ for some $h\in R[x]$ that is a
	null polynomial on $R$.
\end{proof}

\begin{remark}
	To prevent confusion about the expression for the
	stabilizer group in Proposition~\ref{firststab} we emphasize
	that, in general, not every polynomial of the form $x+h$ with
	$h \in \Null[R]$ induces  a polynomial permutation of $R[\alfa]$,
	as the following example shows.
\end{remark}
\begin{example}
	Let $R=\mathbb{F}_q$. Consider the polynomial
	$(x^q-x)\in \Null[\mathbb{F}_q] $. 	Then the polynomial
	$f(x)=x+(x^q-x)=x^q$ induces the identity on $\mathbb{F}_q$,
	but $f$	is not a permutation polynomial on $\mathbb{F}_q[\alfa]$,
	since $f(\alfa)=f(0)=0$. Thus $f$ does not
	induce an element of $St_{\alfa}(\mathbb{F}_q)$.
\end{example}

The remainder of this section is concerned with polynomial
permutations of the ring of dual numbers in the simple case where
the base ring is a finite field.
We already determined the number of polynomial functions on the
dual ring over a finite field (see Corollary~\ref{fieldfunc}).
The number of polynomial permutations
now follows readily from Corollary~\ref{Geperncount}, since every
pair of functions on a finite field arises as the pair of functions
induced by a polynomial and its derivative.

\begin{lemma}\label{Perf}
	Let $\Fq$ be a finite field with $q$ elements.
	Then for all functions
	$F,G\colon\mathbb{F}_q\rightarrow \mathbb{F}_q $
	there exists a polynomial $ f\in\mathbb{F}_q[x]$ such that
	\[(F, G)=([f\/],[f'\/]) \quad\text{and}\quad \deg f<2q.\]
\end{lemma}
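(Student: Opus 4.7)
The plan is a straightforward dimension-count over $\mathbb{F}_q$. I would set
\[
V = \{\,f\in \mathbb{F}_q[x]\mid \deg f < 2q\,\}
\]
and consider the evaluation-plus-derivation map
\[
\phi \colon V \longrightarrow \mathcal{F}(\mathbb{F}_q) \times \mathcal{F}(\mathbb{F}_q),
\qquad \phi(f) = \bigl([f\/]_{\mathbb{F}_q},\,[f'\/]_{\mathbb{F}_q}\bigr).
\]
This is visibly $\mathbb{F}_q$-linear. The domain $V$ has $\mathbb{F}_q$-dimension $2q$ (basis $1,x,\dots,x^{2q-1}$), while the codomain is a product of two copies of $\mathcal{F}(\mathbb{F}_q)$, each of $\mathbb{F}_q$-dimension $q$, so it also has dimension $2q$. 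Hence surjectivity of $\phi$ is equivalent to injectivity, and the statement I want to prove is equivalent to $\ker\phi = 0$.

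For injectivity, I would appeal directly to Example~\ref{insexamp}: a polynomial $f$ lies in $\ker\phi$ precisely when $f\in \Nulld[\mathbb{F}_q]$, and $\Nulld[\mathbb{F}_q] = (x^q-x)^2\mathbb{F}_q[x]$. The generator $(x^q-x)^2$ has degree $2q$, so every nonzero element of $\Nulld[\mathbb{F}_q]$ has degree at least $2q$; therefore no nonzero element of $V$ can lie in the kernel. Thus $\phi$ is injective, and by equality of finite dimensions it is bijective, which gives the required polynomial $f\in V$ for every pair $(F,G)$.

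The only nontrivial input is the identification $\Nulld[\mathbb{F}_q] = (x^q-x)^2\mathbb{F}_q[x]$, which is already in hand from Example~\ref{insexamp}; the rest is counting. There is no real obstacle. An explicit alternative, if one wanted a constructive proof, would be to build Hermite-type basis polynomials $L_a,M_a\in\mathbb{F}_q[x]$ satisfying $L_a(b)=\delta_{a,b}$, $L_a'(b)=0$, $M_a(b)=0$, $M_a'(b)=\delta_{a,b}$ for all $a,b\in\mathbb{F}_q$, and then take $f = \sum_a F(a)L_a + \sum_a G(a)M_a$; but the dimension argument above is cleaner and needs nothing beyond results already established in the paper.
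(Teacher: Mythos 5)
Your proof is correct, but it takes a different route from the paper. The paper's proof is an explicit construction: starting from any $f_0,f_1\in\mathbb{F}_q[x]$ of degree less than $q$ with $[f_0]=F$ and $[f_1]=G$, it sets $f(x)=f_0(x)+(f_0'(x)-f_1(x))(x^q-x)$ and checks directly that $[f]=F$, $[f']=G$, and $\deg f<2q$ --- this is essentially the Hermite-interpolation idea you mention only as an aside. Your main argument instead is a dimension count: the linear map $\phi\colon V\to\mathcal{F}(\mathbb{F}_q)\times\mathcal{F}(\mathbb{F}_q)$ on the space $V$ of polynomials of degree less than $2q$ has kernel $V\cap \Nulld[\mathbb{F}_q]$, which is zero because the generator $(x^q-x)^2$ of $\Nulld[\mathbb{F}_q]$ (Example~\ref{insexamp}) has degree $2q$; equality of dimensions then forces surjectivity. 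This is valid and fits naturally into the ideal-theoretic framework of Section~\ref{sec2} (it is in effect the statement $[\mathbb{F}_q[x]:\Nulld[\mathbb{F}_q]]=q^{2q}$ combined with the count of all pairs of functions). Two small points worth making explicit: you are using that \emph{every} function $\mathbb{F}_q\to\mathbb{F}_q$ is a polynomial function, so that the codomain really is the full $2q$-dimensional space of pairs of functions (standard Lagrange interpolation, but it is the reason the target has dimension exactly $2q$ rather than less); and your argument is non-constructive where the paper's yields the interpolating polynomial explicitly, which costs nothing for the lemma as stated but loses the concrete formula.
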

\begin{proof}
	Let $f_0,f_1\in\mathbb{F}_q[x]$ such that $[f_0] =F$
	and $[f_1] =G$ and set
	\[
	f(x) = f_0(x) + (f'_0(x) - f_1(x))(x^q-x).
	\]
	Then $[f]=[f_0]=F$ and $[f']=[f_1]=G$. Moreover,
	by division with remainder by $(x^q-x)$,
	we can find $f_0, f_1$ such that $\deg f_0,\deg f_1<q$.
\end{proof}
\begin{proposition}\label{13.111}
	Let $\mathbb{F}_q$ be a finite  field with $q$ elements.
	The number  $|\mathcal{P}(\mathbb{F}_q[\alfa])|$ of polynomial
	permutations on $\mathbb{F}_q[\alfa]$  is given by
	\[
	|\mathcal{P}(\mathbb{F}_q[\alfa])|=q!(q-1)^q q^q.
	\]
\end{proposition}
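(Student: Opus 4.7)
The plan is to apply Corollary~\ref{Geperncount} directly to $R=\mathbb{F}_q$, reducing the count to the product $B\cdot|\PolFun[\mathbb{F}_q]|$, where $B$ counts pairs $(H,G)$ with $H$ a bijection on $\mathbb{F}_q$ and $G\colon \mathbb{F}_q\to\mathbb{F}_q^{*}$ that actually arise as $([g],[g'])$ for some $g\in\mathbb{F}_q[x]$.

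First I would compute $|\PolFun[\mathbb{F}_q]|$. Since $\mathbb{F}_q$ is a finite field, every function $\mathbb{F}_q\to\mathbb{F}_q$ is a polynomial function (e.g.\ by Lagrange interpolation, or by Example~\ref{insexamp} combined with Remark~\ref{nullinpolf} giving $|\PolFun[\mathbb{F}_q]|=[\mathbb{F}_q[x]:\Null[\mathbb{F}_q]]=q^{q}$). Hence $|\PolFun[\mathbb{F}_q]|=q^{q}$.

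Next I would count $B$. Here the key ingredient is Lemma~\ref{Perf}, which says that for \emph{every} pair of functions $(F,G)$ on $\mathbb{F}_q$ there is a polynomial $g\in\mathbb{F}_q[x]$ realising it as $([g],[g'])$. Therefore the ``realisability'' constraint in the definition of $B$ is vacuous, and $B$ is simply the number of pairs $(H,G)$ with $H$ a permutation of $\mathbb{F}_q$ and $G$ a function into the unit group $\mathbb{F}_q^{*}$. There are $q!$ choices for $H$ and $(q-1)^{q}$ choices for $G$ (since $|\mathbb{F}_q^{*}|=q-1$ and $G$ is an arbitrary function from a $q$-element set into $\mathbb{F}_q^{*}$), so $B=q!(q-1)^{q}$.

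Putting the two together via Corollary~\ref{Geperncount} gives
\[
|\mathcal{P}(\mathbb{F}_q[\alpha])|=B\cdot|\PolFun[\mathbb{F}_q]|=q!\,(q-1)^{q}\,q^{q},
\]
which is the claimed formula. There is no real obstacle; the only subtle point is to invoke Lemma~\ref{Perf} to see that the derivative pair can be prescribed freely (i.e.\ independently of the value pair), so that the count factors as a product without any hidden dependencies.
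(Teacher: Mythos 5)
Your proposal is correct and follows essentially the same route as the paper: apply Corollary~\ref{Geperncount} with $R=\mathbb{F}_q$, use Lemma~\ref{Perf} to see that every pair $(H,G)$ with $H$ bijective and $G$ unit-valued is realised as $([f],[f'])$, so $B=q!(q-1)^q$, and multiply by $|\PolFun[\mathbb{F}_q]|=q^q$. No gaps.
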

\begin{proof}
	Let $\mathcal{B}$ be the set of pairs of functions $(H,G)$ such that
	\[
	H\colon \mathbb{F}_q\rightarrow \mathbb{F}_q  \text{ bijective\qquad and }\qquad
	G\colon \mathbb{F}_q\rightarrow \mathbb{F}_q\smallsetminus\{0\}.
	\]
	Clearly, $|\mathcal{B}|= q!(q-1)^q$.
	By Lemma~\ref{Perf}, each $(H,G)\in \mathcal{B}$ arises as	
	$([f],[f'])$ for some $f\in \mathbb{F}_q[x]$. Thus by
	Corollary~\ref{Geperncount},
	$|\mathcal{P}(\mathbb{F}_q[\alfa])|=
	|\mathcal{B}|\cdot |\PolFun[\mathbb{F}_q]|=
	q!(q-1)^q q^q$.
\end{proof}

When $R$ is a finite field, then, as we have seen, we do not need the
stabilizer group to determine the number of polynomial permutations
on the ring of dual numbers. We will nevertheless investigate this
group, starting with its order, for comparison purposes, and because
it yields some information on the structure of $\PrPol[\Fqa]$.
\begin{theorem} \label{1301}
	Let $\mathbb{F}_q$ be a finite a field with $q$ elements. Then
	\begin{enumerate}
		\item[\rm (1)]
		$|St_{\alfa}(\mathbb{F}_q)| =|\{[f'\/]_{\mathbb{F}_q}\mid
		f\in \mathbb{F}_q[x],\; [f\/]_{\mathbb{F}_q} = id_{\mathbb{F}_q}
		\text{ and }[f'\/]_{\mathbb{F}_q}\text{ is unit-valued} \}|$
		\item[\rm (2)]
		$|St_{\alfa}(\mathbb{F}_q)|=
		|\{[f'\/]_{\mathbb{F}_q}\mid f\in \mathbb{F}_q[x],\
		[f\/]_{\mathbb{F}_q} = id_{\mathbb{F}_q},\;
		\deg f<2q \text{ and } [f'\/]_{\mathbb{F}_q}\text{ is unit-valued} \}|$
		\item[\rm (3)]
		$|St_{\alfa}(\mathbb{F}_q)|  =(q-1)^q$. 
	\end{enumerate}
\end{theorem}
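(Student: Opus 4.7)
The plan is to prove all three parts by setting up a bijection between the stabilizer and the set of unit-valued derivative functions on $\mathbb{F}_q$, then counting the latter via Lemma~\ref{Perf}.

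For (1), I would start from Proposition~\ref{firststab}, which represents every $F \in St_{\alfa}(\mathbb{F}_q)$ by some $f \in \mathbb{F}_q[x]$ with $[f]_{\mathbb{F}_q} = id_{\mathbb{F}_q}$. Since such an $f$ must induce a permutation of $\mathbb{F}_q[\alfa]$, Corollary~\ref{RcoeffGenper} forces $[f']_{\mathbb{F}_q}$ to be unit-valued. I would then apply Corollary~\ref{Gencount} with second coordinates set to zero to conclude that two such polynomials $f, g$ induce the same element of $St_{\alfa}(\mathbb{F}_q)$ if and only if $[f']_{\mathbb{F}_q} = [g']_{\mathbb{F}_q}$. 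Thus the assignment $F \mapsto [f']_{\mathbb{F}_q}$ is a well-defined injection on $St_{\alfa}(\mathbb{F}_q)$ whose image is exactly the set on the right of (1).

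For (2), I would invoke Proposition~\ref{sur} (or specialize Corollary~\ref{7}) with the null polynomial $h_1 = (x^q - x)^2$ on $\mathbb{F}_q[\alfa]$, supplied by Example~\ref{insexamp} together with Theorem~\ref{4}. Dividing any representative $f$ from (1) by $(x^q - x)^2$ produces a remainder $\tilde f$ with $[\tilde f]_{\mathbb{F}_q[\alfa]} = [f]_{\mathbb{F}_q[\alfa]}$, so in particular $[\tilde f]_{\mathbb{F}_q} = id_{\mathbb{F}_q}$, $[\tilde f']_{\mathbb{F}_q} = [f']_{\mathbb{F}_q}$, and $\deg \tilde f < 2q$. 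Hence the right-hand sides of (1) and (2) coincide.

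For (3), the set in (1) sits inside the set of all unit-valued functions $\mathbb{F}_q \to \mathbb{F}_q \smallsetminus \{0\}$, which has cardinality $(q-1)^q$. For the reverse inclusion, Lemma~\ref{Perf} produces, for any given unit-valued $G$, a polynomial $f$ with $([f]_{\mathbb{F}_q}, [f']_{\mathbb{F}_q}) = (id_{\mathbb{F}_q}, G)$; this $f$ meets all requirements, so the set in (1) equals the set of unit-valued functions, and $|St_{\alfa}(\mathbb{F}_q)| = (q-1)^q$. The main obstacle is modest and lies in (1): one must verify that the passage $F \mapsto [f']_{\mathbb{F}_q}$ is well-defined, since a priori different representatives of the same $F$ could give different derivative functions; Corollary~\ref{Gencount} controls this because it says that whenever $[f]_{\mathbb{F}_q[\alfa]} = [g]_{\mathbb{F}_q[\alfa]}$ with $[f]_{\mathbb{F}_q} = [g]_{\mathbb{F}_q} = id_{\mathbb{F}_q}$, also $[f']_{\mathbb{F}_q} = [g']_{\mathbb{F}_q}$. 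Once that is in hand, Lemma~\ref{Perf} supplies the surjectivity needed to finish the count.
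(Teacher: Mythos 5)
Your proposal is correct and follows essentially the same route as the paper: Proposition~\ref{firststab} plus Corollary~\ref{Gencount} to set up the well-defined injection $F\mapsto[f']_{\mathbb{F}_q}$, Theorem~\ref{Genper} (in your case its special case, Corollary~\ref{RcoeffGenper}) for unit-valuedness and surjectivity, and Lemma~\ref{Perf} for the count in (3). The only cosmetic difference is that you obtain (2) by reducing representatives modulo $(x^q-x)^2$ via Proposition~\ref{sur}, where the paper cites Lemma~\ref{Perf} directly; both are immediate.
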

\begin{proof}
	To see (1), set
	$A=\{[f'\/]_{\mathbb{F}_q}\mid f\in \mathbb{F}_q[x],\;
	[f\/]_{\mathbb{F}_q} = id_{\mathbb{F}_q}
	\text{ and }[f'\/]_{\mathbb{F}_q}\text{ is unit-valued}\}$.
	We define a bijection $\varphi$ from
	$St_{\alfa}(\mathbb{F}_q)$  to $A$.
	Given $F\in St_{\alfa}(\mathbb{F}_q)$,
	there exists a polynomial $f\in \mathbb{F}_q[x]$ inducing $F$
	on $\mathbb{F}_q[\alfa]$
	such that $[f\/]_{\mathbb{F}_q}=id_{\mathbb{F}_q}$ by Definition~\ref{std}.
	By Theorem~\ref{Genper}, $[f'\/]_{\mathbb{F}_q}$ is unit-valued.
	We set $\varphi(F)=[f'\/]_{\mathbb{F}_q}$.
	Corollary~\ref{6} shows that $\varphi$ is well-defined and injective,
	and Theorem~\ref{Genper} shows that it is surjective.
	
	(2) follows from (1) and Lemma~\ref{Perf}.
	Ad (3). By (1),
	$|St_{\alfa}(\mathbb{F}_q)|\le|\{G \colon \mathbb{F}_q
	\rightarrow \mathbb{F}_q^* \}|= (q-1)^q.$
	Now consider a function
	$G \colon \mathbb{F}_q\rightarrow \mathbb{F}_q^*$.
	By Lemma~\ref{Perf}, there exists a polynomial  $h\in \mathbb{F}_q[x]$
	such that $[h]_{\mathbb{F}_q}=id_{\mathbb{F}_q}$ and
	$[h']_{\mathbb{F}_q}=G$.
	Thus $h$ represents an element of $St_{\alfa}(\mathbb{F}_q)$,
	and $G$  maps to this element under the bijection $\varphi$ in the
	proof of (1).  Hence $| St_{\alfa}(\mathbb{F}_q)|\ge (q-1)^q$.
\end{proof}

The equalities of Theorem~\ref{1301} actually come from a
group isomorphism, as the second author has  shown~\cite{unitval}.
By Proposition~\ref{13.111} and Theorem~\ref{1301},
we immediately see the special case for finite fields of a
more general result that we will show in the next
section~(see Theorem~\ref{14}).
\begin{corollary}\label{countpn=1}
	The number  $|\mathcal{P}(\mathbb{F}_q[\alfa])|$ of polynomial
	permutations on $\mathbb{F}_q[\alfa]$  is given by
	\[
	|\mathcal{P}(\mathbb{F}_q[\alfa])|=|\mathcal{P}(\mathbb{F}_q)|
	|\mathcal{F}(\mathbb{F}_q)||St_{\alfa}(\mathbb{F}_q)|.
	\]
\end{corollary}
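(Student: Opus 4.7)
The plan is simply to combine the counting formulas already established in this section with the two elementary facts that for a finite field $\mathbb{F}_q$ every function is a polynomial function and every permutation is a polynomial permutation. Concretely, $|\mathcal{F}(\mathbb{F}_q)|=q^q$ and $|\mathcal{P}(\mathbb{F}_q)|=q!$, since $\mathbb{F}_q[x]/(x^q-x)$ is exactly the ring of all functions $\mathbb{F}_q \to \mathbb{F}_q$ under evaluation.

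From Proposition~\ref{13.111} we already know the left hand side:
\[
|\mathcal{P}(\mathbb{F}_q[\alfa])| = q!\,(q-1)^q\,q^q.
\]
From Theorem~\ref{1301}(3) we have $|St_{\alfa}(\mathbb{F}_q)| = (q-1)^q$. So the right hand side evaluates to
\[
|\mathcal{P}(\mathbb{F}_q)|\,|\mathcal{F}(\mathbb{F}_q)|\,|St_{\alfa}(\mathbb{F}_q)| = q! \cdot q^q \cdot (q-1)^q,
\]
which matches. That is the entire proof; it is a one-line verification of equality of two numerical expressions.

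There is no real obstacle here, since both sides have already been computed in closed form earlier in the section. The only thing worth remarking is that the identity should be read as a pointer to the more general factorization result (announced as Theorem~\ref{14} in the next section), of which this finite-field case is the simplest instance; here it is reduced to a numerical coincidence rather than a conceptual bijection, but the structural reason will appear later.
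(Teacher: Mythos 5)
Your proof is correct and matches the paper's own argument: the corollary is obtained by combining Proposition~\ref{13.111} with Theorem~\ref{1301}, together with the standard facts $|\mathcal{F}(\mathbb{F}_q)|=q^q$ and $|\mathcal{P}(\mathbb{F}_q)|=q!$. Your closing remark that this is the finite-field instance of the structural result Theorem~\ref{14} is exactly the framing the paper itself gives.
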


\section{The stabilizer of $R$\\ in the group of polynomial
	permutations of $\Ralfa$ }\label{stabsect}

In this section we express the numbers of polynomial functions
and polynomial permutations on $\Ralfa$ in terms of the order of
$St_{\alfa}(R)$, the stabilizer of $R$, that is, the group of
those polynomial permutations of $\Ralfa$ that fix $R$ pointwise.
The group of those polynomial permutations of $\Ralfa$ that can
be realized by polynomials with coefficients in $R$ will play
a role, as it contains the stabilizer.

\begin{Notation}\label{freealfper}
	Let
	$
	\mathcal{P}_R(R[\alfa]) =
	\{F\in \PrPol[{R[\alfa]}]\mid F=[f\/] \textnormal{ for some }
	f\in R[x]\}
	$.
\end{Notation}
\begin{remark}
	Proposition~\ref{firststab} shows that the elements of $St_{\alfa}(R)$,
	a priori induced by polynomials in $\Ralfa[x]$,
	can be realized by polynomials in $R[x]$, that is,
	\[ St_{\alfa}(R)\subseteq \mathcal{P}_R(R[\alfa]). \]
\end{remark}

The following well-known, useful characterization of permutation
polynomials on finite local rings has been shown by
N\"obauer~\cite[section~III, statement~6, pp.~335]{Noe64PTPP} (also
for several variables \cite[Theorem~2.3]{Noe64PTPP}). It is
implicitly shown in the proof of a different result in McDonalds's
monograph on finite rings~\cite[pp.~269--272]{finiterings}, and
explicitly in a paper of Nechaev~\cite[Theorem~3]{Nec80PTPI}.
\begin{lemma}[{\cite[Theorem.~2.3]{Noe64PTPP}}]\label{Necha}
	Let $R$ be a finite local ring, not a field, $M$ its maximal ideal, and
	$f\in R[x]$.
	
	Then $f$ is a permutation polynomial on $ R$
	if and only if the following conditions hold:
	\begin{enumerate}
		\item[\rm (1)]
		$f$ is a permutation polynomial on $R/M$
		\item[\rm (2)]
		for all $a\in R$, $f'(a)\ne 0\mod{M}$.
	\end{enumerate}
\end{lemma}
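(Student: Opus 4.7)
The plan is to reduce both directions to a single Taylor-style identity that is valid over an arbitrary commutative ring: for any $f\in R[x]$ there exists a polynomial $P\in R[x,y]$ such that
\[
f(a+b)=f(a)+f'(a)b+b^{2}P(a,b)\qquad\text{for all }a,b\in R.
\]
This is verified monomial by monomial from $(a+b)^{n}=a^{n}+na^{n-1}b+b^{2}Q_{n}(a,b)$. With this identity at hand, both implications reduce to linear algebra mod $M$.

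For the forward direction, suppose $f$ is a permutation polynomial on $R$. Condition (1) is immediate: the canonical projection $\pi\colon R\to R/M$ intertwines evaluation, so the function induced by $f$ on $R/M$ is the composition of a surjection with $\pi$, hence surjective, hence bijective (as $R/M$ is finite). For condition (2), assume for contradiction that $f'(a)\in M$ for some $a\in R$. Since $f$ permutes $R$ and, by (1), permutes the cosets of $M$, it must map each coset $a+M$ bijectively onto $f(a)+M$. Thus the translate $g\colon M\to M$, $g(m)=f(a+m)-f(a)$, is a bijection. But the Taylor identity gives $g(m)=f'(a)m+m^{2}P(a,m)$, and both summands lie in $M^{2}$ under our assumption, so $g(M)\subseteq M^{2}$. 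Because $R$ is local but not a field, $M\ne 0$, and Nakayama's lemma yields $M^{2}\subsetneq M$; this contradicts bijectivity of $g$.

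For the reverse direction, assume (1) and (2); since $R$ is finite, it suffices to prove $f$ is injective on $R$. If $f(a)=f(c)$, then reducing mod $M$ and using (1) shows $a\equiv c\pmod{M}$, so write $c=a+m$ with $m\in M$. The Taylor identity gives
\[
0=f(c)-f(a)=m\bigl(f'(a)+mP(a,m)\bigr).
\]
By (2), $f'(a)\notin M$ and so $f'(a)$ is a unit of the local ring $R$; moreover $mP(a,m)\in M$, so $f'(a)+mP(a,m)$ is a unit plus a non-unit, i.e., a unit. Hence $m=0$ and $c=a$.

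The main obstacle, and the only nontrivial ring-theoretic input, is the step $M^{2}\subsetneq M$ in the forward direction, which requires invoking Nakayama's lemma together with the standing hypothesis that $R$ is \emph{not} a field (equivalently, $M\ne 0$); without this hypothesis the entire argument collapses, as is already visible from the fact that the analogous statement is trivial over fields and needs no derivative condition at all. Everything else is routine manipulation with the Taylor expansion and the fact that $R/M$ is a field, so units and non-units in $R$ are cleanly separated by the ideal $M$.
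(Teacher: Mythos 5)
Your proof is correct and complete. Note, however, that the paper does not prove this lemma at all: it is stated as a known result, with references to N\"obauer, to McDonald's monograph, and to Nechaev, so there is no ``paper proof'' to compare against. Your argument is the standard one and is self-contained: the Taylor-type identity $f(a+b)=f(a)+f'(a)b+b^{2}P(a,b)$, verified monomial by monomial with integer binomial coefficients, is valid over any commutative ring; the reverse direction correctly reduces injectivity to the invertibility of $f'(a)+mP(a,m)$ (a unit plus an element of $M$ in a local ring); and the forward direction correctly derives a contradiction from $g(M)\subseteq M^{2}\subsetneq M$, where the strict inclusion follows from Nakayama (or, even more directly for a finite local ring, from the nilpotency of $M$) together with the hypothesis $M\neq 0$. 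The one step worth spelling out slightly more is why $f$ maps the coset $a+M$ \emph{onto} $f(a)+M$: it maps $a+M$ into $f(a)+M$ because polynomial evaluation commutes with reduction mod $M$, it is injective there because $f$ is injective on $R$, and the two cosets have equal finite cardinality; you assert this correctly but tersely. With that reading, the proof stands as a valid substitute for the cited literature.
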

\begin{lemma}\label{forunitdreiv}
	Let $R$ be a finite commutative ring and $F\in \PrPol$.
	Then there exists a polynomial $f\in R[x]$ such that $[f]_R=F$
	and $f'(r)$ is a unit of $R$ for every $r\in R$.
\end{lemma}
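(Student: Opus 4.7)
The plan is to reduce to finite local rings via the CRT decomposition of $R$, and then treat the field and non-field local cases separately. Writing $R = R_1 \oplus \cdots \oplus R_s$ as a direct sum of finite local rings $R_i$ with maximal ideals $M_i$, the canonical isomorphism $R[x] \cong R_1[x] \oplus \cdots \oplus R_s[x]$ is compatible with both evaluation and formal differentiation (both act componentwise). Hence it suffices to exhibit, for each $i$, a polynomial $g_i \in R_i[x]$ with $[g_i]_{R_i} = F|_{R_i}$ such that $g_i'(r) \in R_i^*$ for every $r \in R_i$; assembling the $g_i$ through the isomorphism produces a polynomial $f \in R[x]$ with $[f]_R = F$ and $f'(r)$ a unit of $R$ for every $r \in R$.

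For each local factor $R_i$ that is not a field, I would start from any polynomial $g_i \in R_i[x]$ representing the permutation $F|_{R_i}$ and invoke Lemma~\ref{Necha}: since $g_i$ is a permutation polynomial on $R_i$, one has $g_i'(r) \notin M_i$ for all $r \in R_i$, and in a local ring this is equivalent to $g_i'(r)$ being a unit. So in this case no modification of $g_i$ is required.

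For each local factor $R_i$ that is a field $\mathbb{F}_{q_i}$, Lemma~\ref{Necha} does not directly apply, but Lemma~\ref{Perf} provides complete flexibility: for any pair of functions on $\mathbb{F}_{q_i}$, there is a polynomial realizing this pair as $([g_i], [g_i'])$. Taking the first component to be $F|_{R_i}$ and the second to be the constant function $1$ yields a representative $g_i$ whose derivative is unit-valued. Combining these local choices through the direct sum decomposition finishes the argument. There is no substantial obstacle; the only point worth noting is that the non-field case is handled for free by N\"obauer's criterion, while the field case genuinely requires the explicit construction of Lemma~\ref{Perf}.
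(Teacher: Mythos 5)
Your proposal is correct and follows essentially the same route as the paper: reduce to the local case via the direct sum decomposition, handle non-field local rings with N\"obauer's criterion (Lemma~\ref{Necha}), and handle fields with Lemma~\ref{Perf}. The paper's proof is just a terser version of exactly this argument; your added details (componentwise compatibility of evaluation and differentiation, and the observation that the non-field case needs no modification of the representing polynomial) are accurate fillings-in of what the paper leaves implicit.
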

\begin{proof}
	Since every finite commutative ring is a direct sum of local rings,
	we may assume $R$ local.  When  $R$ is a finite field,
	the statement follows from Lemma~\ref{Perf}, while,
	when  $R$ is a finite local ring but not a field,
	it follows from Lemma~\ref{Necha}.
\end{proof}
\begin{lemma}\label{PZlemma}
	$\mathcal{P}_R(\Ralfa)$  is a subgroup of $\PrPol[\Ralfa]$; and the map
	\[
	\varphi\colon \mathcal{P}_R(\Ralfa) \rightarrow \PrPol[R]
	\quad\text{defined by}\quad
	F\mapsto \restrict{F}{R}
	\quad\text{(the restriction of $F$ to $R$)}
	\]
	is a group epimorphism with $\ker{\varphi}=St_{\alfa}(R)$.
	In particular,
	\begin{enumerate}
		\item[\rm (1)]
		every element of $\PrPol[R]$ occurs as
		the restriction to $R$ of some $F\in \mathcal{P}_R(\Ralfa)$
		\item[\rm (2)]
		$\mathcal{P}_R(\Ralfa)$ contains
		$St_{\alfa}(R)$ as a normal subgroup and
		\[
		\raise2pt\hbox{$\mathcal{P}_R(\Ralfa)$} \big/
		\lower2pt\hbox{$St_{\alfa}(R)$}
		\;\cong\; \PrPol[R].
		\]
	\end{enumerate}
\end{lemma}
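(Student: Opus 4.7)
The plan is to verify the claim in three stages: first that $\mathcal{P}_R(\Ralfa)$ is indeed a subgroup, then that $\varphi$ is a well-defined group homomorphism with the asserted kernel, and finally that $\varphi$ is surjective; items~(1) and~(2) then follow from the first isomorphism theorem.

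For the subgroup claim, I would observe that $\mathcal{P}_R(\Ralfa)$ contains the identity (induced by $x$) and is closed under composition, because if $F=[f]$ and $G=[g]$ with $f,g\in R[x]$, then $F\circ G=[f\circ g]$ and $f\circ g\in R[x]$. Since $\PrPol[\Ralfa]$ is finite, closure under composition in a group suffices to give a subgroup; alternatively, $F^{-1}=F^{k-1}$ for some $k$ (the order of $F$), and $F^{k-1}$ is visibly induced by the $(k-1)$-fold self-composition of $f$, which lies in $R[x]$.

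Next I would show $\varphi$ is well-defined. If $F=[f]_{\Ralfa}$ with $f\in R[x]$ and $F$ permutes $\Ralfa$, then by Corollary~\ref{RcoeffGenper} the function $[f]_R$ is a permutation of $R$, so $\restrict{F}{R}\in\PrPol[R]$. Independence of the chosen $f$ follows from Corollary~\ref{Gencount}: two polynomials in $R[x]$ inducing the same function on $\Ralfa$ must induce the same function on $R$. Since $R$ is preserved by such $F$, and composition of set-theoretic restrictions equals the restriction of the composition, $\varphi$ is a group homomorphism. The kernel is precisely the set of $F\in\mathcal{P}_R(\Ralfa)$ with $\restrict{F}{R}=id_R$, which is exactly $St_{\alfa}(R)$ by Definition~\ref{std} together with Proposition~\ref{firststab} (which guarantees that every element of the stabilizer is realized by a polynomial in $R[x]$, so $St_{\alfa}(R)\subseteq\mathcal{P}_R(\Ralfa)$).

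The main obstacle is the surjectivity of $\varphi$, because given $G\in\PrPol[R]$ one must produce a representative $f\in R[x]$ that not only induces $G$ on $R$ but also induces a permutation on $\Ralfa$. Here Lemma~\ref{forunitdreiv} is exactly the tool I need: it supplies an $f\in R[x]$ with $[f]_R=G$ and $f'(r)\in R^*$ for every $r\in R$. Then Corollary~\ref{RcoeffGenper} guarantees that $f$ is a permutation polynomial on $\Ralfa$, so $[f]_{\Ralfa}\in\mathcal{P}_R(\Ralfa)$ and $\varphi([f]_{\Ralfa})=G$. With $\varphi$ established as a surjective homomorphism with kernel $St_{\alfa}(R)$, statement~(1) is just surjectivity, and statement~(2) follows from the first isomorphism theorem, yielding both the normality of $St_{\alfa}(R)$ in $\mathcal{P}_R(\Ralfa)$ and the quotient isomorphism $\mathcal{P}_R(\Ralfa)/St_{\alfa}(R)\cong\PrPol[R]$.
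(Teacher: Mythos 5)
Your proposal is correct and follows essentially the same route as the paper: closure under composition plus finiteness for the subgroup claim, Lemma~\ref{forunitdreiv} together with Theorem~\ref{Genper} (via Corollary~\ref{RcoeffGenper}) for surjectivity, Proposition~\ref{firststab} for $St_{\alfa}(R)\subseteq\mathcal{P}_R(\Ralfa)$ and the identification of the kernel, and the first isomorphism theorem for part~(2). Your write-up is somewhat more detailed than the paper's (e.g.\ spelling out why restriction is a homomorphism and why the representative is immaterial), but there is no substantive difference.
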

\begin{proof}
	$\mathcal{P}_R(\Ralfa)$ is a finite subset of $\PrPol[R]$ that
	is closed under composition, and hence a subgroup of $\PrPol[R]$.
	Polynomial permutations of $\Ralfa$ induced by polynomials in $R[x]$
	map $R$ to itself bijectively. The map $\varphi$ is therefore well defined,
	and clearly a homomorphism with respect to composition of functions.
	
	Ad (1) This is evident from Theorem~\ref{Genper} and
	Lemma~\ref{forunitdreiv}.
	
	Ad (2)
	$St_{\alfa}(R)$ is contained in $\mathcal{P}_R(\Ralfa)$,
	by Proposition~\ref{firststab}.
	$St_{\alfa}(R)$, the pointwise
	stabilizer of $R$ in $\PrPol[\Ralfa]$ is, therefore, equal to
	the pointwise stabilizer of $R$ in $\mathcal{P}_R(\Ralfa)$,
	which is the kernel of $\varphi$.
\end{proof}

Recall that a function on $R$ is unit-valued if it maps $R$
into, $R^*$\!, the group of units on $R$.
\begin{corollary} \label{mshi}
	For any fixed $F\in \PrPol[R]$,
	\[
	\left|St_{\alfa}(R)\right|=
	\left|\{([f]_R,[f']_R)\mid
	f\in R[x],\qquad
	[f]_{R}= F, \text{ and\/ }
	[f']_R \text{ is unit-valued}\}\right|.
	\]
\end{corollary}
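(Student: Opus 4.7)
The plan is to realize both sides as the size of a single fiber of the epimorphism $\varphi$ from Lemma~\ref{PZlemma}.

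First, recall that Lemma~\ref{PZlemma} gives a surjective group homomorphism $\varphi\colon \mathcal{P}_R(\Ralfa) \to \PrPol[R]$, defined by restriction to $R$, whose kernel is $St_{\alfa}(R)$. Since $\varphi$ is a homomorphism of finite groups, all of its fibers have the same cardinality, namely $|St_{\alfa}(R)|$. In particular, the fiber over any fixed $F\in \PrPol[R]$ has size $|St_{\alfa}(R)|$.

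Next, I would parametrize this fiber concretely by the pairs appearing on the right-hand side. By Corollary~\ref{Gencount} applied to polynomials of the form $f=f_1+\alfa\cdot 0$ with $f_1\in R[x]$, two polynomials $f,g\in R[x]$ induce the same function on $\Ralfa$ if and only if $[f]_R=[g]_R$ and $[f']_R=[g']_R$. Therefore the assignment
\[
[f]_{\Ralfa}\;\longmapsto\;([f]_R,[f']_R)
\]
is a well-defined injection from $\mathcal{P}_R(\Ralfa)$ into $\mathcal{F}(R)\times \mathcal{F}(R)$. Its image is characterised by Corollary~\ref{RcoeffGenper}: an $f\in R[x]$ represents an element of $\mathcal{P}_R(\Ralfa)$ precisely when $[f]_R$ is a permutation of $R$ and $[f']_R$ is unit-valued. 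Hence the image of the fiber $\varphi^{-1}(F)$ under this injection is exactly
\[
\bigl\{([f]_R,[f']_R)\mid f\in R[x],\ [f]_R=F,\ [f']_R\text{ is unit-valued}\bigr\}.
\]

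Putting the two steps together yields
\[
|St_{\alfa}(R)|=|\varphi^{-1}(F)|=\bigl|\{([f]_R,[f']_R)\mid f\in R[x],\ [f]_R=F,\ [f']_R\text{ unit-valued}\}\bigr|,
\]
which is the desired equality. The only subtlety is to make sure the fiber $\varphi^{-1}(F)$ is nonempty (so it genuinely has size $|St_{\alfa}(R)|$) for every fixed $F\in\PrPol[R]$; this is guaranteed by part (1) of Lemma~\ref{PZlemma}, which in turn uses Lemma~\ref{forunitdreiv} to produce, for every $F\in\PrPol[R]$, a polynomial $f\in R[x]$ with $[f]_R=F$ and $f'$ everywhere unit-valued. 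No further calculation is needed; everything is bookkeeping around the epimorphism of Lemma~\ref{PZlemma} and the pair-of-functions description of polynomials with coefficients in $R$.
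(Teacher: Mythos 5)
Your proposal is correct and follows essentially the same route as the paper: the paper identifies the set $\{G\in\mathcal{P}_R(\Ralfa)\mid \restrict{G}{R}=F\}$ as a coset of $St_{\alfa}(R)$ (which is exactly your fiber $\varphi^{-1}(F)$ of the epimorphism in Lemma~\ref{PZlemma}), uses Lemma~\ref{forunitdreiv} for nonemptiness, and then bijects it with the set of pairs via Corollaries~\ref{Gencount} and~\ref{RcoeffGenper}, just as you do.
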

\begin{proof}
	Let $f\in R[x]$ such that $[f]_{R}= F$ and $[f']_R$ is unit-valued.
	Such a polynomial $f$ exists by Lemma~\ref{forunitdreiv}.
	By Corollary~\ref{RcoeffGenper}, $f$ induces a permutation of $\Ralfa$,
	which we denote by $[f]$.
	
	Let $C$ be the coset of $[f]$ with respect to $St_{\alfa}(R)$.
	Then $\left|C\right|= |St_{\alfa}(R)|$.
	By Lemma~\ref{PZlemma} (2),  $C$ consists precisely
	of those polynomial permutations $G\in \mathcal{P}_R(\Ralfa)$ with
	$\restrict{G}{R}=F$.
	
	A bijection $\psi$ between $C$ on one hand and
	the set of pairs $([g]_{R},[g']_{R})$, where $g\in R[x]$ such that
	$[g]_R= F$ and $[g']_R$ is unit-valued on the other hand
	is given by $\psi(G)=([g]_{R},[g']_{R})$,
	where $g$ is any polynomial in $R[x]$ which induces $G$ on $\Ralfa$.
	The map $\psi$ is well-defined and injective by Corollary~\ref{Gencount}
	and onto by Corollary~\ref{RcoeffGenper}.
\end{proof}
\begin{theorem}\label{14}
	Let $R$ be a finite local ring. Then
	\[
	|\PrPol[\Ralfa]|=|\PolFun[{R}]|\cdot
	|\PrPol[R]|\cdot |St_{\alfa}(R)|.
	\]
\end{theorem}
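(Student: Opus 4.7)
The plan is to combine Corollary~\ref{Geperncount} and Corollary~\ref{mshi} in a straightforward counting argument. Corollary~\ref{Geperncount} already reduces the problem by asserting
\[
|\PrPol[\Ralfa]| \;=\; B \cdot |\PolFun[R]|,
\]
where $B$ denotes the number of pairs $(H,G)$, with $H\colon R\to R$ bijective and $G\colon R\to R^*$, which can be realized simultaneously as $([g]_R,[g']_R)$ for some $g\in R[x]$. So what remains is to identify $B$ with $|\PrPol[R]|\cdot |St_{\alfa}(R)|$.

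First I would partition the set counted by $B$ according to the first coordinate. Any admissible first coordinate $H$ is, by definition, a bijective polynomial function on $R$, hence an element of $\PrPol[R]$; conversely, by Lemma~\ref{forunitdreiv}, every $F\in \PrPol[R]$ is representable by some $f\in R[x]$ with $[f']_R$ unit-valued, so every $F\in \PrPol[R]$ actually occurs as a first coordinate. Thus the $H$-fibers partition the $B$-set and are indexed precisely by $\PrPol[R]$.

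Next, I would apply Corollary~\ref{mshi}, which says that for any fixed $F\in \PrPol[R]$ the number of pairs $([f]_R,[f']_R)$ arising from polynomials $f\in R[x]$ with $[f]_R=F$ and $[f']_R$ unit-valued equals $|St_{\alfa}(R)|$. Since this cardinality is independent of $F$, summing over $F\in \PrPol[R]$ yields
\[
B \;=\; \sum_{F\in \PrPol[R]} |St_{\alfa}(R)| \;=\; |\PrPol[R]|\cdot |St_{\alfa}(R)|.
\]
Substituting back into the identity from Corollary~\ref{Geperncount} produces the claimed formula.

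There is no real obstacle beyond organizing the partition cleanly; the two ingredients (Corollary~\ref{Geperncount} and Corollary~\ref{mshi}) do all the work. The only point requiring a moment's care is to verify both that every $H$ occurring as a first coordinate lies in $\PrPol[R]$ (immediate from the definition of $B$) and that every $F\in \PrPol[R]$ actually occurs (which is exactly Lemma~\ref{forunitdreiv}); this ensures the partition has exactly $|\PrPol[R]|$ non-empty blocks of equal size $|St_{\alfa}(R)|$.
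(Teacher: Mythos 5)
Your proposal is correct and follows essentially the same route as the paper: the paper's own proof also reduces to showing $|B|=|\PrPol[R]|\cdot|St_{\alfa}(R)|$ via Corollary~\ref{mshi} and then multiplies by $|\PolFun[R]|$ using the bijection of Remark~\ref{bijection}/Theorem~\ref{Genper} (which is the content of Corollary~\ref{Geperncount} that you cite directly). Your explicit fibering of $B$ over $\PrPol[R]$, with surjectivity onto first coordinates guaranteed by Lemma~\ref{forunitdreiv}, is exactly the argument the paper leaves implicit.
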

\begin{proof}
	Set
	\[
	B = \{([f]_R,[f']_R)\mid f\in R[x],\; [f]_R\in P(R) \text{ and }
	[f']_R \text{ is unit-valued}  \}.
	\]
	By Corollary~\ref{mshi},
	$|B|=|\PrPol[R]|\cdot |St_{\alfa}(R)|$.
	
	We define a function
	$\psi\colon \PrPol[\Ralfa] \rightarrow
	B\times \PolFun[{R}]$
	as follows: if $G\in\PrPol[\Ralfa]$ is induced by
	$g=g_1+\alfa g_2$, where $g_1,g_2 \in R[x]$,
	we let $\psi(G)=(([g_1]_R,[g'_1]_R),[g_2]_R)$.
	By Theorem~\ref{Genper} and Corollary~\ref{6}, $\psi$ is
	well-defined and one-to-one. The surjectivity of $\psi$ follows by
	Theorem~\ref{Genper}. Therefore,
	\begin{equation*}
		|\PrPol[\Ralfa]|=|B\times \PolFun[{R}]|=
		|\PrPol[R]|\cdot |St_{\alfa}(R)|\cdot
		| \PolFun[{R}]|.\qedhere
	\end{equation*}
\end{proof}
\begin{remark}\label{tocorrecproof}
	Let $R$ be a finite local ring which is not a field, $M$ the maximal
	ideal of $R$, and $q=\left| R/M \right|$.
	Jiang~\cite{ratio} has shown the following relation between the number of
	polynomial functions and the number of polynomial permutations on $R$:
	
	\[
	|\PrPol[R]|=\frac{q!(q-1)^q}{q^{2q}}|\PolFun[R]|.
	\]
\end{remark}
\begin{corollary}\label{14a}
	Let $R$ be a finite local ring which is not a field. Then
	\[|\PolFun[\Ralfa]|=|\PolFun[{R}]|^2\cdot |St_{\alfa}(R)|.\]
\end{corollary}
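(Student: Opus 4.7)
The plan is to combine Theorem~\ref{14} with Jiang's ratio formula from Remark~\ref{tocorrecproof}, applied to both $R$ and $\Ralfa$. First, I would verify that both rings satisfy the hypothesis of Jiang's result. By assumption $R$ is a finite local ring that is not a field. By Proposition~\ref{0}, $\Ralfa$ is also a finite local ring, and it is never a field since $\alfa$ is a nonzero nilpotent element. Moreover, by Proposition~\ref{0}(4), the residue fields of $R$ and $\Ralfa$ are canonically isomorphic, so both have the same residue field cardinality $q = |R/M|$.

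Consequently, Jiang's formula yields the same ratio of permutations to functions for both rings:
\[
\frac{|\PrPol[R]|}{|\PolFun[R]|} \;=\; \frac{q!(q-1)^q}{q^{2q}} \;=\; \frac{|\PrPol[\Ralfa]|}{|\PolFun[\Ralfa]|}.
\]
Solving for $|\PolFun[\Ralfa]|$ gives
\[
|\PolFun[\Ralfa]| \;=\; |\PolFun[R]| \cdot \frac{|\PrPol[\Ralfa]|}{|\PrPol[R]|}.
\]

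Finally, I would substitute the formula from Theorem~\ref{14}, namely $|\PrPol[\Ralfa]| = |\PolFun[R]|\cdot |\PrPol[R]|\cdot |St_{\alfa}(R)|$, into this last equation. The factor $|\PrPol[R]|$ cancels, leaving the desired identity $|\PolFun[\Ralfa]|=|\PolFun[R]|^2\cdot |St_{\alfa}(R)|$.

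There is no real obstacle here; the proof is a short manipulation once one recognizes that Jiang's formula applies to $\Ralfa$ as well as to $R$. The only point requiring a moment's verification is the observation that $\Ralfa$ meets the hypothesis of being a finite local ring that is not a field, which is immediate from the nilpotency of $\alfa$ together with Proposition~\ref{0}.
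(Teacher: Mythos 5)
Your proposal is correct and follows essentially the same route as the paper: combine Theorem~\ref{14} with Jiang's ratio (Remark~\ref{tocorrecproof}) applied simultaneously to $R$ and $\Ralfa$, using the isomorphism of residue fields from Proposition~\ref{0}~(4), and cancel $|\PrPol[R]|$. Your explicit check that $\Ralfa$ is a finite local ring that is not a field (so Jiang's formula applies) is a small point the paper leaves implicit.
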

\begin{proof}
	The residue fields of $R$ and $\Ralfa$ are isomorphic by
	Proposition~\ref{0}~(4). Let $q$ denote the order
	of this residue field.
	By Theorem~\ref{14},
	$|\PrPol[\Ralfa]|=|\PolFun[{R}]|\cdot |\PrPol[R]|\cdot |St_{\alfa}(R)|$.
	Now apply Remark~\ref{tocorrecproof} to  $\PrPol[\Ralfa]$
	and $\PrPol[R]$ simultaneously and cancel.
\end{proof}

\section{Permutation polynomials on $\Zmalfa$}\label{sec4}

In this section we characterize permutation polynomials on
$\mathbb{Z}_{p^n}[\alfa]$ in relation to permutation polynomials
on $\mathbb{Z}_{p^n}$.
\begin{lemma}[{\cite[Hilfssatz 8]{Noe53GRP}}]\label{10.1}
	Let $n>1$, and $f\in \mathbb{Z}[x]$.
	Then $f$ is a permutation polynomial on $\mathbb{Z}_{p^n}$
	if and only if the following conditions hold:
	\begin{enumerate}
		\item[\rm (1)]
		$f$ is a permutation polynomial on $\mathbb{Z}_{p}$
		\item[\rm (2)]
		for all $a\in \mathbb{Z}$, $f'(a)\not\equiv 0\pmod{p}$.
	\end{enumerate}
\end{lemma}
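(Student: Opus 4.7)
The whole argument rests on the integer Taylor identity
\[
f(a + p^{k}t)\;\equiv\; f(a) + p^{k}\,t\,f'(a)\pmod{p^{2k}},
\]
valid for every $f\in\mathbb{Z}[x]$, every $a,t\in\mathbb{Z}$ and every $k\ge 1$. This holds because the divided derivatives $f^{(j)}/j!$ are integer-valued on $\mathbb{Z}[x]$ (each monomial $x^{n}$ contributes $\binom{n}{j}a^{n-j}$), so in the expansion $f(a+h)=\sum_{j\ge 0}\tfrac{f^{(j)}(a)}{j!}h^{j}$ every term with $j\ge 2$ is divisible by $h^{2}$.

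For the $(\Rightarrow)$ direction, condition~(1) is immediate: reduction modulo $p$ gives a surjective ring homomorphism $\mathbb{Z}_{p^{n}}\to\mathbb{Z}_{p}$ that commutes with polynomial evaluation, so if $[f]_{p^{n}}$ is a permutation then so is $[f]_{p}$. For condition~(2), suppose toward a contradiction that $f'(a)\equiv 0\pmod{p}$ for some $a\in\mathbb{Z}$. Applying the Taylor identity with $k=n-1$ and $t=1$ and using $2(n-1)\ge n$ (which is where the hypothesis $n>1$ enters), we get
\[
f(a+p^{n-1})\;\equiv\; f(a)+p^{n-1}f'(a)\;\equiv\; f(a)\pmod{p^{n}}.
\]
Since $p^{n-1}\not\equiv 0\pmod{p^{n}}$, this contradicts the injectivity of $[f]_{p^{n}}$.

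For the $(\Leftarrow)$ direction, I will prove injectivity, which suffices because $\mathbb{Z}_{p^{n}}$ is finite. Assume $f(a)\equiv f(b)\pmod{p^{n}}$ with $a\not\equiv b\pmod{p^{n}}$. Reducing mod $p$ and using~(1), we have $a\equiv b\pmod{p}$, so one can write $b=a+p^{k}u$ with $1\le k<n$ and $\gcd(u,p)=1$. The Taylor identity (applicable because $k\ge 1$, so $2k\ge k+1$) yields
\[
p^{k}u\,f'(a)\;\equiv\; 0\pmod{p^{k+1}},
\]
i.e.\ $u\,f'(a)\equiv 0\pmod{p}$. But $p\nmid u$ by choice and $p\nmid f'(a)$ by~(2), a contradiction.

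The main obstacle is nothing beyond careful bookkeeping with moduli: one must keep track of which power of $p$ kills the quadratic tail of the Taylor expansion, and verify the two inequalities $2(n-1)\ge n$ and $2k\ge k+1$, both of which follow directly from $n\ge 2$ and $k\ge 1$. Philosophically, the $(\Leftarrow)$ direction is exactly a single Hensel lifting step expressed as an injectivity argument.
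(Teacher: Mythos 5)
Your proof is correct: the integer Taylor identity $f(a+p^k t)\equiv f(a)+p^k t f'(a)\pmod{p^{2k}}$ is valid because the divided derivatives $f^{(j)}/j!$ are integer-valued, and the two modulus checks $2(n-1)\ge n$ and $2k\ge k+1$ are exactly where $n>1$ and $k\ge 1$ are used. The paper itself gives no proof of this lemma --- it is quoted from N\"obauer's \emph{Hilfssatz 8} --- and your Hensel-lifting argument is the standard one for this classical fact, so there is nothing to compare against beyond noting that your writeup is complete and self-contained.
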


We now apply the principle of Lemma~\ref{10.1} to
Theorem~\ref{Genper} and Corollary~\ref{PPfirstcoordinate} in the
special case where $R=\mathbb{Z}_{p^n}$.
\begin{theorem}\label{PPalfaequivalence}\label{11}
	Let $f\in\mathbb{Z}[\alfa][x]$, $f=f_1+\alpha f_2$ with
	$f_1,f_2\in\mathbb{Z}[x]$. Then the following statements are equivalent:
	\begin{enumerate}
		\item[\rm (1)]
		$f$ is a permutation polynomial on $\Zmalfa[p^n]$ for all $n\ge 1$
		\item[\rm (2)]
		$f$ is a permutation polynomial on $\Zmalfa[p^n]$ for some $n\ge 1$
		\item[\rm (3)]
		$f_1$ is a permutation polynomial on $\Zmalfa[p^n]$ for all $n\ge 1$
		\item[\rm (4)]
		$f_1$ is a permutation polynomial on $\Zmalfa[p^n]$ for some $n\ge 1$
		\item[\rm (5)]
		$f_1$ is a permutation polynomial on $\mathbb{Z}_p$  and  for all
		$a\in\mathbb{Z}$, $f_1'(a)\not\equiv 0\pmod{p}$
		\item[\rm (6)]
		$f_1$ is a permutation polynomial on $\mathbb{Z}_{p^n}$ for all $n\ge 1$
		\item[\rm (7)]
		$f_1$ is a permutation polynomial on $\mathbb{Z}_{p^n}$ for some $n> 1$.
	\end{enumerate}
\end{theorem}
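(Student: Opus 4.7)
The plan is to route all equivalences through condition (5)---the only statement that refers neither to $n$ nor to $\alpha$---and show that each of the other six conditions is equivalent to (5). This yields all pairwise equivalences at once. The two tools are Theorem~\ref{Genper} (characterizing permutations of $R[\alpha]$ via $f_1$ and $f_1'$) and Lemma~\ref{10.1} (N\"obauer's criterion for permutations of $\mathbb{Z}_{p^n}$, $n>1$).

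First, Corollary~\ref{PPfirstcoordinate} says that $f_1+\alpha f_2$ permutes $R[\alpha]$ if and only if $f_1$ (regarded as a polynomial in $R[\alpha][x]$) does; this immediately yields $(1)\Leftrightarrow(3)$ and $(2)\Leftrightarrow(4)$, so one may discard the role of $f_2$ entirely. The equivalences $(5)\Leftrightarrow(6)\Leftrightarrow(7)$ are then direct applications of Lemma~\ref{10.1}: for $n>1$ the conjunction in (5) is literally the N\"obauer criterion for $f_1$ to permute $\mathbb{Z}_{p^n}$, giving $(5)\Leftrightarrow(6)$; the implication $(6)\Rightarrow(7)$ is trivial; and $(7)\Rightarrow(5)$ is again Lemma~\ref{10.1}. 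The case $n=1$ of (6) uses only the first clause of (5).

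It remains to link (3)/(4) to (5). Applying Theorem~\ref{Genper} with $R=\mathbb{Z}_{p^n}$, $f_1$ permutes $\Zmalfa[p^n]$ if and only if $f_1$ permutes $\mathbb{Z}_{p^n}$ and $f_1'(a)$ is a unit of $\mathbb{Z}_{p^n}$ for every $a\in\mathbb{Z}_{p^n}$. The key identification is that a unit of $\mathbb{Z}_{p^n}$ is exactly a residue coprime to $p$, so the condition ``$f_1'(a)\in\mathbb{Z}_{p^n}^{*}$'' is literally ``$f_1'(a)\not\equiv 0\pmod p$'' and depends only on $a\bmod p$---in particular, it is independent of $n$. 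Thus $(4)\Rightarrow(5)$: Theorem~\ref{Genper} hands over the derivative clause of (5) for free, while reduction modulo $p$ (a fibering $\mathbb{Z}_{p^n}\to\mathbb{Z}_p$ with fibers of equal size, compatible with $f_1$) converts a permutation of $\mathbb{Z}_{p^n}$ into a permutation of $\mathbb{Z}_p$. Conversely $(5)\Rightarrow(3)$: for each $n\ge 1$, Lemma~\ref{10.1} promotes (5) to ``$f_1$ permutes $\mathbb{Z}_{p^n}$'', and then Theorem~\ref{Genper} lifts this, together with the derivative clause of (5), to ``$f_1$ permutes $\Zmalfa[p^n]$''. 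The implication $(3)\Rightarrow(4)$ is trivial.

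There is no genuine obstacle; each arrow is a one-line appeal to Theorem~\ref{Genper}, Lemma~\ref{10.1}, Corollary~\ref{PPfirstcoordinate}, or the reduction map. The one conceptual point worth isolating is the identification between the unit-valued-derivative condition of Theorem~\ref{Genper} over $\mathbb{Z}_{p^n}$ and the mod-$p$ derivative condition appearing both in (5) and in Lemma~\ref{10.1}; once this is observed, (5) manifestly captures everything that varies with $n$, and the equivalences collapse.
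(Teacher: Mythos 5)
Your proof is correct and uses the same ingredients as the paper's own argument: Corollary~\ref{PPfirstcoordinate} for $(1)\Leftrightarrow(3)$ and $(2)\Leftrightarrow(4)$, Lemma~\ref{10.1} for $(5)\Leftrightarrow(6)\Leftrightarrow(7)$, and Theorem~\ref{Genper} together with the observation that units of $\mathbb{Z}_{p^n}$ are exactly the residues prime to $p$ (and that a permutation polynomial on $\mathbb{Z}_{p^n}$ restricts to one on $\mathbb{Z}_p$) to bridge the two groups. The only difference is bookkeeping---you hub everything through (5), whereas the paper proves $(1)\Leftrightarrow(6)$ and $(2)\Rightarrow(5)$ directly---so the approaches are essentially identical.
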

\begin{proof}
	By Corollary~\ref{PPfirstcoordinate},
	(1) is equivalent to (3), and
	(2) is equivalent to (4).
	By Lemma~\ref{10.1}, the statements
	(5), (6) and (7)
	are equivalent.
	
	By Theorem~\ref{Genper}, (1) is equivalent to
	(6) together with the fact that
	$f_1'(a)\not\equiv 0\pmod{p}$ 	for any $a\in \mathbb{Z}$.
	But Lemma~\ref{10.1} shows that the condition	on the derivative
	of $f_1$ is redundant. Therefore, (1) is
	equivalent to (6).
	
	(1) implies (2) a fortiori.
	Finally, taking into account the fact that a permutation
	polynomial on $\mathbb{Z}_{p^n}$ is also a permutation polynomial
	on $\mathbb{Z}_p$, Theorem~\ref{Genper} shows	that (2)
	implies (5).
\end{proof}

The special case $f=f_1$ yields the following corollary.
\begin{corollary}\label{PPconstantcoeffequivalence}
	Let $f\in\mathbb{Z}[x]$. Then the following statements are equivalent:
	\begin{enumerate}
		\item[\rm (1)]
		$f$ is a permutation polynomial on $\Zmalfa[p^n]$ for all $n\ge 1$
		\item[\rm (2)]
		$f$ is a permutation polynomial on $\Zmalfa[p^n]$ for some $n\ge 1$
		\item[\rm (3)]
		$f$ is a permutation polynomial on $\mathbb{Z}_p$ and for all
		$a\in\mathbb{Z}$, $f'(a)\not\equiv 0\pmod{p}$
		\item[\rm (4)]
		$f$ is a permutation polynomial on $\mathbb{Z}_{p^n}$ for all $n\ge 1$
		\item[\rm (5)]
		$f$ is a permutation polynomial on $\mathbb{Z}_{p^n}$ for some $n> 1$.
	\end{enumerate}
\end{corollary}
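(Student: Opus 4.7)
The plan is to derive this corollary immediately from Theorem~\ref{PPalfaequivalence} by taking the $\alfa$-component to be trivial. Given $f\in\mathbb{Z}[x]$, I would apply the theorem to the polynomial $f+0\cdot\alfa\in\mathbb{Z}[\alfa][x]$, so that in the notation of the theorem one has $f_1=f$ and $f_2=0$. The corollary is nothing more than the specialization of the theorem to this case.

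With this identification, each of the five statements of the corollary appears among the seven equivalent statements of the theorem. Concretely, statements (1) and (3) of the theorem both reduce to statement (1) of the corollary (and statements (2) and (4) of the theorem both reduce to statement (2) of the corollary) simply because $f_1=f$; and statements (5), (6), (7) of the theorem coincide, after substituting $f_1=f$ and $f_1'=f'$, with statements (3), (4), (5) of the corollary, respectively. Thus the five equivalences of the corollary are a sub-list of the seven equivalences already established in the theorem.

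Since Theorem~\ref{PPalfaequivalence} has proved the equivalence of all seven of its statements, the five statements of the corollary are equivalent a fortiori, and nothing more needs to be done. There is no real obstacle to overcome at this level: all the substantive work has been absorbed into the theorem, which in turn rests on Corollary~\ref{PPfirstcoordinate} (permutation-polynomial status on $\Ralfa$ depends only on the $\alfa$-free part $f_1$), Theorem~\ref{Genper} (the two-part criterion pairing a permutation on $R$ with a unit-valued derivative), and N\"obauer's Lemma~\ref{10.1} (which, for $R=\mathbb{Z}_{p^n}$, shows that the derivative condition is already implied by permutation behaviour on $\mathbb{Z}_p$). At the level of the corollary itself, the only work is bookkeeping the correspondence above.
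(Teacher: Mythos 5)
Your proposal is correct and is exactly the paper's argument: the paper introduces the corollary with the single remark ``The special case $f=f_1$ yields the following corollary,'' which is precisely your specialization $f_1=f$, $f_2=0$ in Theorem~\ref{PPalfaequivalence}. Your bookkeeping of which of the seven statements collapse onto which of the five is accurate, so nothing further is needed.
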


We exploit the equivalence of being a permutation polynomial on
$\Zmalfa[p^n]$ and being a permutation polynomial on
$\mathbb{Z}_{p^n}$ (only valid for $n>1$) in the following corollary,
always keeping in mind that being a null-polynomial on $\mathbb{Z}_{p^n}$
is not equivalent to being a null-polynomial on $\Zmalfa[p^n]$.
\begin{corollary}\label{corof11}
	Let $n>1$, and $f,g\in \mathbb{Z}[x]$.
	\begin{enumerate}
		\item[\rm (1)]
		If $f$ is a permutation polynomial on $\mathbb{Z}_{p^n}$ and $g$
		a null polynomial on $\mathbb{Z}_{p^n}$ then $f+g$
		is a permutation polynomial on $\Zmalfa[p^n]$.
		\item[\rm (2)]
		In particular, if $g$ is a null-polynomial on $\mathbb{Z}_{p^n}$,
		$x+g$ induces an element of $\Stab[{p^n}]$.
	\end{enumerate}
\end{corollary}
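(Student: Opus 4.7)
The plan is to deduce both parts directly from Corollary~\ref{PPconstantcoeffequivalence}, which allows us to upgrade ``permutation polynomial on $\mathbb{Z}_{p^n}$'' to ``permutation polynomial on $\Zmalfa[p^n]$'' whenever $n>1$.

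For part (1), I would first observe that since $g$ is a null polynomial on $\mathbb{Z}_{p^n}$, the polynomials $f$ and $f+g$ induce the same function on $\mathbb{Z}_{p^n}$. Consequently $f+g$ is a permutation polynomial on $\mathbb{Z}_{p^n}$. Since $n>1$, the implication $(5)\Rightarrow(1)$ of Corollary~\ref{PPconstantcoeffequivalence} (applied to the integer polynomial $f+g\in\mathbb{Z}[x]$) then yields that $f+g$ is a permutation polynomial on $\Zmalfa[p^n]$, as desired.

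For part (2), I would specialize (1) to $f=x$. The polynomial $x$ is plainly a permutation polynomial on $\mathbb{Z}_{p^n}$, so by (1) $x+g$ induces a polynomial permutation $F$ on $\Zmalfa[p^n]$. It remains to check that $F$ fixes $\mathbb{Z}_{p^n}$ pointwise: for any $a\in\mathbb{Z}_{p^n}$, $F(a)=a+g(a)=a$ because $g$ is a null polynomial on $\mathbb{Z}_{p^n}$. Hence $F\in \Stab[{p^n}]$, which is the assertion.

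There is no real obstacle here; the content is entirely contained in the equivalence ``permutation on $\mathbb{Z}_{p^n}$ $\Leftrightarrow$ permutation on $\Zmalfa[p^n]$'' for $n>1$ already established. The only mild subtlety worth flagging is that being a null polynomial on $\mathbb{Z}_{p^n}$ does \emph{not} imply being a null polynomial on $\Zmalfa[p^n]$ (the derivative condition may fail), so one must not try to reduce to $f$ alone inside $\Zmalfa[p^n]$; the correct route is to work on $\mathbb{Z}_{p^n}$ first and then invoke Corollary~\ref{PPconstantcoeffequivalence} to lift.
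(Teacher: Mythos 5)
Your proof is correct and follows essentially the same route as the paper: both reduce part (1) to the observation that $f+g$ induces the same function as $f$ on $\mathbb{Z}_{p^n}$ and then invoke Corollary~\ref{PPconstantcoeffequivalence} (valid since $n>1$), and both obtain part (2) by specializing to $f=x$ and checking pointwise fixing as in Definition~\ref{std}. Your closing remark about null polynomials on $\mathbb{Z}_{p^n}$ not being null on $\Zmalfa[p^n]$ correctly identifies the one subtlety the paper also flags.
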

\begin{proof}
	Ad (1).
	Set $h=f+g$. Then $[h]_{p^n} = [f]_{p^n}$ and $h$ is, therefore,
	a permutation polynomial on $\mathbb{Z}_{p^n}$.
	Since $n>1$, Corollary~\ref{PPconstantcoeffequivalence} applies
	and $h(x)$ is a permutation polynomial on $\Zmalfa[p^n]$.
	Now (2) follows from (1) and Definition~\ref{std}.
\end{proof}

The following example illustrates the necessity of the condition $n>1$
in Theorem~\ref{11}~(7) and Corollary~\ref{corof11}.
\begin{example}
	Consider the polynomials $f(x)=(p-1)x$ and
	$g(x)=(p-1)(x^p-x)$. Clearly, $f$ is a permutation polynomial
	on both $\mathbb{Z}_p$ and $\mathbb{Z}_p[\alfa]$,
	while $g(x)$ is a null polynomial on $\mathbb{Z}_p$.
	Now, $h(x)=f(x)+g(x)=(p-1)x^p$ permutes the
	elements of $\mathbb{Z}_p$, but $h$ is not a permutation polynomial
	on $\mathbb{Z}_p[\alfa]$, as $h(\alfa)=h(0)=0$.
\end{example}

We can apply the Chinese Remainder Theorem to Theorem~\ref{11}
and Corollary~\ref{corof11} to obtain statements about permutation
polynomials on $\mathbb{Z}_m[\alfa]$.

\begin{theorem}\label{11.1}
	Let $f =f_1 +\alfa f_2$ with $f_1, f_2 \in \mathbb{Z}[x]$.
	Then $f$ is a permutation polynomial on $\mathbb{Z}_m[\alfa]$
	if and only if for every prime  $p$ dividing $m$,
	$f_1$ is a permutation polynomial on $\mathbb{Z}_{p}$ and
	$f_1'$ has no zero modulo $p$.
\end{theorem}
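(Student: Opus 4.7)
The plan is to reduce to the prime power case, which is already handled by Theorem~\ref{PPalfaequivalence}, using the Chinese Remainder Theorem. Write $m = p_1^{n_1}\cdots p_r^{n_r}$ with distinct primes $p_i$. The canonical ring isomorphism
\[
\mathbb{Z}_m \;\cong\; \mathbb{Z}_{p_1^{n_1}} \times \cdots \times \mathbb{Z}_{p_r^{n_r}}
\]
extends coordinatewise (by applying $(-)[\alpha]$, i.e.\ tensoring with $\mathbb{Z}[x]/(x^2)$) to a ring isomorphism
\[
\mathbb{Z}_m[\alpha] \;\cong\; \mathbb{Z}_{p_1^{n_1}}[\alpha] \times \cdots \times \mathbb{Z}_{p_r^{n_r}}[\alpha],
\]
and substitution into the polynomial $f\in\mathbb{Z}[\alpha][x]$ commutes with these isomorphisms. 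Hence $f$ is a permutation polynomial on $\mathbb{Z}_m[\alpha]$ if and only if $f$ is a permutation polynomial on $\mathbb{Z}_{p_i^{n_i}}[\alpha]$ for every~$i$.

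Next I would invoke Theorem~\ref{PPalfaequivalence} at each prime: the equivalence of items (1) and (5) there tells us that, for each $i$, $f$ being a permutation polynomial on $\mathbb{Z}_{p_i^{n_i}}[\alpha]$ is equivalent to $f_1$ being a permutation polynomial on $\mathbb{Z}_{p_i}$ together with $f_1'(a)\not\equiv 0\pmod{p_i}$ for all $a\in\mathbb{Z}$. Ranging $i$ over all prime divisors of $m$ yields exactly the condition stated in the theorem.

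The only mild subtlety is that Theorem~\ref{PPalfaequivalence} is stated for polynomials $f=f_1+\alpha f_2$ with $f_1,f_2\in\mathbb{Z}[x]$ and depends only on the prime $p$, not on the exponent $n_i$; this is precisely why the final condition in Theorem~\ref{11.1} involves only primes dividing $m$ and not their multiplicities. There is no real obstacle here: the main task is just to spell out the CRT compatibility, which follows because evaluation at a tuple of dual numbers is carried out componentwise in the product decomposition.
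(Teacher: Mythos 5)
Your proposal is correct and is exactly the argument the paper intends: the authors state Theorem~\ref{11.1} without proof, remarking only that one applies the Chinese Remainder Theorem to Theorem~\ref{11}, and your write-up spells out precisely that reduction (CRT decomposition of $\mathbb{Z}_m[\alpha]$ into the $\mathbb{Z}_{p_i^{n_i}}[\alpha]$, compatibility of evaluation with the componentwise isomorphism, then the equivalence of (1)/(2) with (5) in Theorem~\ref{PPalfaequivalence} at each prime).
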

\begin{corollary}\label{11.2}
	Let  $m=p_1^{n_1}\cdots p_k^{n_k}$, where $p_1,\ldots,p_k$ are
	distinct primes and $n_j>1$ for $j=1,\ldots,k$.
	Let $f,g\in \mathbb{Z}[x]$. If $f$ is a permutation
	polynomial on $\mathbb{Z}_m$ and $g$ a null polynomial
	on $\mathbb{Z}_m$ then $f+g$ is a  permutation polynomial
	on $\Zmalfa$. In particular, for every null polynomial $g$
	on $\mathbb{Z}_m$, $x+g$ induces an element of $\Stab[{m}]$.
\end{corollary}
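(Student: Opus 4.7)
The plan is to reduce to the prime power case via the Chinese Remainder Theorem (CRT) and then invoke Corollary~\ref{corof11}.

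Since the primes $p_1,\ldots,p_k$ are distinct, CRT yields compatible ring isomorphisms
\[
\mathbb{Z}_m \;\cong\; \prod_{j=1}^k \mathbb{Z}_{p_j^{n_j}}
\qquad\text{and}\qquad
\Zmalfa \;\cong\; \prod_{j=1}^k \Zmalfa[p_j^{n_j}],
\]
the second being obtained by adjoining $\alfa$ with $\alfa^2=0$ componentwise (equivalently, by tensoring with $\mathbb{Z}[x]/(x^2)$). Under the reduction maps $\mathbb{Z}\to \mathbb{Z}_m$ and $\mathbb{Z}\to\mathbb{Z}_{p_j^{n_j}}$, polynomial evaluation commutes with the product decomposition: a polynomial in $\mathbb{Z}[x]$ induces a permutation (respectively, the zero function) on a finite product of rings if and only if it induces one on each factor.

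Applying this observation to the hypotheses, $f$ is a permutation polynomial on each $\mathbb{Z}_{p_j^{n_j}}$ and $g$ is a null polynomial on each $\mathbb{Z}_{p_j^{n_j}}$. Since $n_j>1$ for every $j$, Corollary~\ref{corof11}(1) applies factor by factor and yields that $f+g$ is a permutation polynomial on $\Zmalfa[p_j^{n_j}]$ for each $j$. Transporting this back through the CRT isomorphism shows that $f+g$ is a permutation polynomial on $\Zmalfa$.

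For the ``in particular'' clause, take $f(x)=x$, which trivially permutes $\mathbb{Z}_m$. The first part then gives that $x+g$ is a permutation polynomial on $\Zmalfa$. Since $g$ is null on $\mathbb{Z}_m$, for every $a\in \mathbb{Z}_m$ one has $(x+g)(a)=a+g(a)=a$, so the induced permutation fixes $\mathbb{Z}_m$ pointwise and hence lies in $\Stab[m]$. The only mildly delicate point is the compatibility of the CRT decomposition with the dual number construction and with polynomial evaluation; once that is noted, no genuine obstacle remains.
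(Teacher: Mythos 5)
Your proof is correct and follows exactly the route the paper intends: the paper gives no explicit proof but states that Corollary~\ref{11.2} is obtained by applying the Chinese Remainder Theorem to Corollary~\ref{corof11}, which is precisely your factor-by-factor reduction to the prime-power case. The details you supply (compatibility of the CRT decomposition with the dual-number construction and with evaluation of integer polynomials, and the pointwise-fixing argument for the stabilizer claim) are all sound.
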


\section{The stabilizer of $\mathbb{Z}_{p^n}$\\ in
	the group of polynomial permutations of $\Zmalfa[p^n]$}\label{sec5}

Recall from Definition~\ref{std} that $\Stab[m]$ denotes the
pointwise stabilizer of $\mathbb{Z}_{m}$ in the group of polynomial
permutations on $\Zmalfa$.
We have seen in Theorem~\ref{14} the importance of this subgroup
for counting polynomial functions and polynomial permutations
on $\Zmalfa$. The somewhat technical results on $\Stab[m]$ that
we develop in this section will allow us to determine its order
and, from that, to derive explicit formulas for the number
of polynomial functions polynomial and permutations on
$\mathbb{Z}_{p^n}[\alpha]$ for $n\le p$ in section~\ref{sec6}.

We have already defined the ideal of null-polynomials and the
ideal of polynomials that are null together with their first
derivative in Section~\ref{sec2} (Definition~\ref{020}). For
counting purposes, we now pay special attention to the degrees
of the polynomials inducing the null function. We are interested
in the case of $R=\mathbb{Z}_{p^n}$ for $n>1$ (finite fields
having been covered already).
\begin{definition} \label{11.12}
	Let
	\begin{align*}
		\N{m} &=\{f\in \mathbb{Z}_m[x]\mid f\in \Null[\mathbb{Z}_m]
		\textnormal{ and }\deg f < k\},
		\\
		\Nd{m} &=\{f\in \mathbb{Z}_m[x]\mid
		f\in \Nulld[\mathbb{Z}_m] \textnormal{ and }\deg f < k\}.
	\end{align*}
\end{definition}

Recall from Definition~\ref{Algfun} that
$[f]_m$, short for  $[f]_{\mathbb{Z}_m}$, denotes the polynomial function
induced by $f$ on $\mathbb{Z}_m$.
\begin{proposition}\label{12}
	Let  $m=p_1^{n_1}\cdots p_l^{n_l}$, where $p_1,\ldots,p_l$
	are distinct primes and suppose that $n_j>1$ for $j=1,\ldots,l$.
	Then
	\begin{enumerate}
		\item[\rm (1)]
		$ |\Stab[m]|  =|\{[f']_m\mid f\in \Null[\mathbb{Z}_m] \}|$
		\item[\rm (2)] 
		if there exists a monic polynomial in $\mathbb{Z}[x]$ of
		degree~$k$ that is a null polynomial on $\Zmalfa$, then
		\begin{enumerate}
			\item[\rm (a)]
			$
			|\Stab[m]|  =
			|\{[f']_m\mid f\in\Null[\mathbb{Z}_m]\textnormal{ with }\deg f<k\}|
			$
			\vskip5pt
			\item[\rm (b)]
			$
			|\Stab[m]|  = [\Null[\mathbb{Z}_m]:\Nulld[\mathbb{Z}_m]]=
			\frac{|\N[k]{m}|}{|\Nd[k]{m}|}.
			$
		\end{enumerate}
	\end{enumerate}
\end{proposition}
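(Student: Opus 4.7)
The plan is to reduce the order of $\Stab[m]$ to a count of derivative-functions of null polynomials, and then to realize that count as an index of abelian groups.

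First, I would establish the normal form for elements of $\Stab[m]$. By Proposition~\ref{firststab}, every $F\in\Stab[m]$ is induced by a polynomial of the form $x+h$ with $h\in \Null[\mathbb{Z}_m]$; conversely, since all $n_j>1$, Corollary~\ref{11.2} guarantees that every such $x+h$ does induce an element of $\Stab[m]$. To count, I must recognize when two such polynomials induce the same permutation of $\Zmalfa$. By Corollary~\ref{6} applied to the pair $(x+h_1,\,x+h_2)$ with $h_i\in \Null[\mathbb{Z}_m]$, the conditions $[x+h_1]_m=[x+h_2]_m$ and $[h_2]_m=[0]_m$ are automatic, and the only nontrivial condition is $[1+h_1']_m=[1+h_2']_m$, i.e. $[h_1']_m=[h_2']_m$. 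Hence the map $(x+h)\mapsto [h']_m$ descends to a bijection from $\Stab[m]$ onto $\{[h']_m\mid h\in\Null[\mathbb{Z}_m]\}$, proving~(1).

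For (2a), let $p\in\mathbb{Z}[x]$ be a monic null polynomial on $\Zmalfa$ of degree $k$. By Lemma~\ref{31}(1), both $p$ and $p'$ lie in $\Null[\mathbb{Z}_m]$, so in fact $p\in\Nulld[\mathbb{Z}_m]$. Given any $h\in\Null[\mathbb{Z}_m]$, divide with remainder in $\mathbb{Z}[x]$ (possible because $p$ is monic) to get $h=qp+r$ with $\deg r<k$. Then $r=h-qp$ is still a null polynomial on $\mathbb{Z}_m$, and since $[p]_m=[p']_m=0$ we get $[r']_m=[h'-q'p-qp']_m=[h']_m$. So every value $[h']_m$ already occurs for some $r\in\Null[\mathbb{Z}_m]$ of degree less than $k$, and (2a) follows from (1).

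For (2b), I would view $\Null[\mathbb{Z}_m]$ as an additive group and consider the group homomorphism $\delta\colon \Null[\mathbb{Z}_m]\to \mathcal{F}(\mathbb{Z}_m)$ defined by $\delta(h)=[h']_m$, with values in the finite additive group of all functions $\mathbb{Z}_m\to\mathbb{Z}_m$. By definition, $\ker\delta=\Nulld[\mathbb{Z}_m]$, and by the first isomorphism theorem the image has cardinality $[\Null[\mathbb{Z}_m]:\Nulld[\mathbb{Z}_m]]$, which together with~(1) gives the first equality in (2b). Restricting $\delta$ to $\N[k]{m}$ yields kernel $\Nd[k]{m}$; the image is the same set as for the unrestricted $\delta$ by the argument in (2a); hence $|\N[k]{m}|/|\Nd[k]{m}|$ equals the same index, finishing (2b).

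The only delicate point will be to justify, cleanly, that the index $[\Null[\mathbb{Z}_m]:\Nulld[\mathbb{Z}_m]]$ (of infinite groups) is finite and equal to the cardinality of the image of $\delta$; I would handle this by pointing out that $\delta$ factors through the finite group $\Null[\mathbb{Z}_m]/\Nulld[\mathbb{Z}_m]$ and injects into $\mathcal{F}(\mathbb{Z}_m)$, so ordinary group-theoretic index arithmetic applies. The rest is a straightforward combination of Proposition~\ref{firststab}, Corollary~\ref{11.2}, Corollary~\ref{6} and Lemma~\ref{31}.
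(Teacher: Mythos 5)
Your proposal is correct and follows essentially the same route as the paper: part (1) via the bijection $F\mapsto[h']_m$ built on Proposition~\ref{firststab}, Corollary~\ref{6} and Corollary~\ref{11.2}; part (2b) via the derivative homomorphism $h\mapsto[h']_m$ with kernel $\Nulld[\mathbb{Z}_m]$ and its restriction to degrees below $k$. The only cosmetic difference is that in (2a) you reprove, by division with remainder, exactly the special case of Proposition~\ref{sur} that the paper simply cites.
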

\begin{proof}
	Ad (1).
	We define a bijection $\varphi$ from
	$\Stab[m]$  to the set of functions induced on
	$\mathbb{Z}_m$ by the derivatives of null polynomials on
	$\mathbb{Z}_m$.
	Given $F\in\Stab[m]$, let $h\in \mathbb{Z}[x]$ be (such as we know
	to exist by Proposition~\ref{firststab})
	a null polynomial on $\mathbb{Z}_m$ such that $x+h(x)$ induces $F$.
	We set $\varphi(F)=[h']_m$. Now Corollary~\ref{6}
	shows	$\varphi$ to be well-defined and injective, and
	Corollary~\ref{11.2} shows it to be surjective.
	
	Ad (2a). If $g\in \Null[\mathbb{Z}_m]$, then by
	Proposition~\ref{sur}, there exists $f\in \mathbb{Z}_m[x]$ with
	$\deg f<k$ such that $[f]_m=[g]_m$ (that is, $f\in \Null[\mathbb{Z}_m]$)
	and $[f']_m=[g']_m$.
	
	Ad (2b).
	Define
	$\varphi\colon \Null[\mathbb{Z}_m] \rightarrow \FZm$
	by $\varphi(f)=[f']_m$.
	Clearly, $\varphi$ is a  homomorphism of additive groups.
	Furthermore,
	$ \ker\varphi=\Nulld[\mathbb{Z}_m]$  and
	$\im\varphi=\{[f']_m\mid f\in \Null[\mathbb{Z}_m]\}$.
	By (1),
	\[
	|\Stab[m]|=[\Null[\mathbb{Z}_m]:\Nulld[\mathbb{Z}_m]].
	\]
	For evaluating the ratio, we restrict $\varphi$ to the additive
	subgroup of $\mathbb{Z}_m[x]$ consisting of polynomials
	of degree less than $k$ and get a homomorphism of additive
	groups defined on $\N[k]{m}$, whose image is still
	$\{[f']_m\mid f\in \Null[\mathbb{Z}_m]\}$, by Corollary~\ref{6},
	and whose kernel is $\Nd[k]{m}$. Hence
	\[
	|\Stab[m]|=[\N[k]{m}:\Nd[k]{m}].
	\]
\end{proof}

We now substitute concrete numbers from Theorem~\ref{05} and
Proposition~\ref{5} for the $k$ that stands for the degree of a
monic null polynomial on $\Zmalfa$ in
Proposition~\ref{12}~(2). Here, as in
Definition~\ref{mudef}, $\mu(m)$ denotes the smallest
positive integer whose factorial is divisible by $m$.
\begin{corollary}
	Let  $m=p_1^{n_1}\cdots p_k^{n_k}$, where $p_1,\ldots,p_k$ are
	distinct primes and suppose that $n_j>1$ for $j=1,\ldots,k$. Then
	\begin{enumerate}
		\item[\rm (1)]
		$|\Stab[m]|=|\{[f']_m\mid f\in \Null[\mathbb{Z}_m]
		\text{ with }\deg f<2\mu(m)\}|$
		\vskip5pt
		\item[\rm (2)]
		$\displaystyle|\Stab[m]|=
		\frac{|\N[2\mu(m)]{m}|}{|\Nd[2\mu(m)]{m}|}$.
	\end{enumerate}
\end{corollary}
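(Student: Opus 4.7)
The plan is to apply Proposition~\ref{12}~(2) with the specific choice $k = 2\mu(m)$. Proposition~\ref{12}~(2) requires as a hypothesis the existence of a monic polynomial in $\mathbb{Z}[x]$ of degree $k$ that is a null polynomial on $\Zmalfa$, so the only thing I need to verify is that such a polynomial exists for $k = 2\mu(m)$.

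This is supplied directly by Theorem~\ref{05}~(1): the falling factorial
\[
(x)_{2\mu(m)} = \prod_{j=0}^{2\mu(m)-1}(x-j) \in \mathbb{Z}[x]
\]
is a null polynomial on $\Zmalfa$, and it is visibly monic of degree $2\mu(m)$. With this witness in place, the hypothesis of Proposition~\ref{12}~(2) is satisfied.

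Given that, statement (1) of the corollary is then precisely Proposition~\ref{12}~(2a) with $k = 2\mu(m)$, and statement (2) is precisely Proposition~\ref{12}~(2b) with the same choice of $k$. There is no real obstacle; the corollary is just the combination of Theorem~\ref{05}~(1) with the already-established Proposition~\ref{12}~(2), and the work of actually relating $|\Stab[m]|$ to the index $[\Null[\mathbb{Z}_m] : \Nulld[\mathbb{Z}_m]]$ and, further, to the ratio $|\N[k]{m}|/|\Nd[k]{m}|$ has been carried out in the proof of Proposition~\ref{12}.
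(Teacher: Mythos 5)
Your proposal is correct and matches the paper's own (implicit) argument exactly: the paper introduces this corollary with the remark that one substitutes the degree of a concrete monic null polynomial on $\Zmalfa$ from Theorem~\ref{05} into Proposition~\ref{12}~(2), and taking $(x)_{2\mu(m)}$, which is monic of degree $2\mu(m)$, as the witness is precisely what is intended. Nothing further is needed.
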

\begin{corollary}\label{13}
	For a prime number $p$ and a natural number $n$, where $1<n\le p$,
	we have
	\begin{enumerate}
		\item[\rm (1)]
		$|\Stab[p^n]|=|\{[f']_{p^n}\mid f\in \Null[\mathbb{Z}_{p^n}]
		\text{ with } \deg f<(n+1)p\}|$
		\vskip5pt
		\item[\rm (2)]
		$\displaystyle|\Stab[p^n]|=
		\frac{|\N[(n+1)p]{p^n}|}{|\Nd[(n+1)p]{p^n}|}$.
	\end{enumerate}
\end{corollary}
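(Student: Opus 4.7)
The plan is to deduce this corollary directly from Proposition~\ref{12} by specializing $m=p^n$ and supplying an appropriate value for the parameter $k$. The hypothesis $1<n\le p$ is exactly what is needed to verify both preconditions of Proposition~\ref{12}(2): the assumption $n>1$ matches the requirement that every prime-power exponent in $m$ exceeds~$1$ (here there is only one prime, namely $p$, with exponent $n$), and the assumption $n\le p$ will let us invoke Proposition~\ref{5} to produce a concrete monic null polynomial of the desired degree.

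First I would set $k=(n+1)p$ and point out that Proposition~\ref{5} guarantees that $(x)_{(n+1)p}=\prod_{j=0}^{(n+1)p-1}(x-j)$ is a monic polynomial in $\mathbb{Z}[x]$ of degree $(n+1)p$ which is a null polynomial on $\Zmalfa[p^n]$. Thus the existential hypothesis of Proposition~\ref{12}(2) is satisfied for $m=p^n$ with this choice of $k$.

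Then statement~(1) of the corollary is immediate from Proposition~\ref{12}(2a), and statement~(2) is immediate from Proposition~\ref{12}(2b), once we substitute $m=p^n$ and $k=(n+1)p$ into the conclusions of that proposition.

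There is essentially no obstacle here: all of the work has been done in Propositions~\ref{5} and~\ref{12}. The only point that deserves a brief remark in the write-up is that the hypothesis $1<n\le p$ is used in two different places, namely $n>1$ to activate Proposition~\ref{12} (which was stated for $m$ a product of prime powers with each exponent strictly greater than $1$) and $n\le p$ to apply Proposition~\ref{5}, which gives a monic null polynomial of degree $(n+1)p$ rather than the generally coarser bound $2\mu(p^n)=2np$ available from Theorem~\ref{05}.
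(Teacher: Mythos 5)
Your proposal is correct and is exactly the derivation the paper intends: specialize Proposition~\ref{12}(2) to $m=p^n$ with $k=(n+1)p$, using Proposition~\ref{5} (valid since $n\le p$) to supply the monic null polynomial $(x)_{(n+1)p}$ on $\Zmalfa[p^n]$, with $n>1$ ensuring Proposition~\ref{12} applies. Nothing further is needed.
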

\begin{remark}
	When $m=p$ is a prime, Proposition~\ref{12} and its Corollaries
	do not apply. This case has been treated in Theorem~\ref{1301}.
\end{remark}

We now employ Proposition~\ref{12} to show that Corollary~\ref{mshi}
takes a simpler form for polynomial functions on $\mathbb{Z}_{p^n}$,
when $n>1$. (Again, the case $n=1$ is exceptional, see Theorem~\ref{1301}.)
\begin{corollary}\label{14c}
	Let $n>1$. Then for any fixed $F\in \PolFun [\mathbb{Z}_{p^n}]$,
	\[
	|\Stab[p^n]|=|\{([f]_{p^n},[f']_{p^n})\mid
	f\in\mathbb{Z}[x] \text{ with\/ } [f]_{p^n}=F \}|.
	\]
\end{corollary}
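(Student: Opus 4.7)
The plan is to observe that fixing $[f]_{p^n}=F$ collapses the right-hand side to the number of distinct derivatives $[f']_{p^n}$ arising from lifts of $F$, and then use an additive translation argument to identify this count with $|\{[h']_{p^n}\mid h\in \Null[\mathbb{Z}_{p^n}]\}|$, which equals $|\Stab[p^n]|$ by Proposition~\ref{12}(1). The essential role of the hypothesis $n>1$ is that it makes Proposition~\ref{12}(1) available.

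First I would write
\[
|\{([f]_{p^n},[f']_{p^n})\mid f\in\mathbb{Z}[x],\; [f]_{p^n}=F\}|
=|\{[f']_{p^n}\mid f\in\mathbb{Z}[x],\; [f]_{p^n}=F\}|,
\]
since the first coordinate is fixed. Next I would pick a single representative $f_0\in\mathbb{Z}[x]$ with $[f_0]_{p^n}=F$, which exists because $F$ is a polynomial function on $\mathbb{Z}_{p^n}$ (lift each coefficient to an integer). Any other $f\in\mathbb{Z}[x]$ with $[f]_{p^n}=F$ differs from $f_0$ by a polynomial $h=f-f_0$ that induces the zero function on $\mathbb{Z}_{p^n}$, so $h$ is a null polynomial on $\mathbb{Z}_{p^n}$. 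Conversely, for any null polynomial $h$, the sum $f_0+h$ is a lift of $F$. Differentiation is additive, so $[f']_{p^n}=[f_0']_{p^n}+[h']_{p^n}$.

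Therefore the set on the right-hand side is the translate by the fixed element $[f_0']_{p^n}$ of the set $\{[h']_{p^n}\mid h\in \Null[\mathbb{Z}_{p^n}]\}$. Translations are bijections of the additive group of functions $\mathbb{Z}_{p^n}\to\mathbb{Z}_{p^n}$, so these two sets have equal cardinality. Applying Proposition~\ref{12}(1) (whose hypothesis requires $n>1$), this common cardinality equals $|\Stab[p^n]|$, which concludes the argument.

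There is no substantial obstacle here; the heavy lifting has already been done in Proposition~\ref{12}. The only minor points to check are bookkeeping items: that one may indeed choose $f_0\in\mathbb{Z}[x]$ (not merely in $\mathbb{Z}_{p^n}[x]$), and that lifting/reducing coefficients modulo $p^n$ does not affect the induced functions $[f]_{p^n}$ or $[f']_{p^n}$. The conceptual content is the comparison with Corollary~\ref{mshi}: for $n>1$ the unit-valued condition on $[f']$ can be dropped, because the number of possible $[f']_{p^n}$ is determined by the additive coset structure and is independent of the choice of $F$.
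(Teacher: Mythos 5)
Your argument is correct and is essentially the paper's own proof: both fix a lift $f_0$ of $F$, identify the set of possible derivatives $[f']_{p^n}$ with a translate of $\{[h']_{p^n}\mid h\in \Null[\mathbb{Z}_{p^n}]\}$ via $h=f-f_0$, and then invoke Proposition~\ref{12}(1). The paper merely spells out the translation as an explicit bijection $\phi$ and checks injectivity and surjectivity by hand, which your translation-by-a-group-element observation packages more succinctly.
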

\begin{proof}
	Set
	\[A=\{([f]_{p^n},[f']_{p^n})\mid
	f\in\mathbb{Z}[x] \text{ with\/ } [f]_{p^n}=F \},\] and
	fix $f_0\in \mathbb{Z}[x]$ with $[f_0]_{p^n}=F $. Then,
	$f-f_0$ is a null polynomial on $\mathbb{Z}_{p^n}$
	for any $f\in\mathbb{Z}[x]$ with $([f]_{p^n},[f']_{p^n})\in A$.
	
	We define a bijection
	\[
	\phi\colon A \rightarrow \{[h']_{p^n}\mid
	h\in \Null[\mathbb{Z}_{p^n}]\},\qquad
	\phi(([f]_{p^n},[f']_{p^n}))=[(f-f_0)']_{p^n} .
	\]
	Since $[(f-f_0)']_{p^n} = [f']_{p^n} - [f_0']_{p^n}$,
	$\phi$ is well defined.
	Also, $\phi$ is injective, because, for two different elements of $A$,
	$([f_1]_{p^n},[f_1']_{p^n})\ne ([f]_{p^n},[f']_{p^n})$ implies
	$[f_1']_{p^n}\ne [f']_{p^n}$ and hence
	$[(f_1-f_0)']_{p^n}\ne [(f-f_0)']_{p^n}$.
	
	To see that $\phi$ is surjective, consider
	$[h']_{p^n}$, where $h\in \Null[\mathbb{Z}_{p^n}]$.
	Then  $[f_0+h]_{p^n}=F$ and, therefore,
	$([f_0+h]_{p^n},[f'_0+h']_{p^n})$ is in $A$ and maps to
	$[h']_{p^n}$ under $\phi$.
	
	By Proposition~\ref{12}~(1),
	\[
	|\Stab[{p^n}]| = |\{[f']_{p^n}\mid f\in \Null[\mathbb{Z}_{p^n}]\}| = |A| .
	\]
\end{proof}
\begin{remark}
	Let $n=1$ and
	$A=\{([f]_{p^n},[f']_{p^n})\mid
	f\in\mathbb{Z}[x] \text{ with\/ } [f]_{p^n}=F \}$.
	Then $|A|=p^p$ by Lemma~\ref{Perf}, but
	$|\Stab[{p^n}]|=(p-1)^p$ by Theorem~\ref{1301}.
	This shows that the condition on $n$ in Corollary~\ref{14c} is necessary.
\end{remark}

We now we give a self-contained proof of Corollary~\ref{14a}
(not using Jiang's ratio \cite{ratio}, but emulating the argument
in the proof of Theorem~\ref{14}), for $R=\Zpnalfa$.

\begin{corollary}\label{14b}
	For any integer $n> 1$,
	\[
	|\PolFun[\Zpnalfa]|=|\PolFun[{\Zm[p^n]}]|^2\cdot |\Stab[p^n]|.
	\]
\end{corollary}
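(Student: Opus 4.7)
The plan is to emulate the proof of Theorem~\ref{14}, replacing permutation-related sets by function-related sets. The key structural input is the triple decomposition $G\mapsto(([g_1]_{p^n},[g_1']_{p^n}),[g_2]_{p^n})$ for $G\in \PolFun[\Zpnalfa]$ induced by $g=g_1+\alpha g_2$ with $g_1,g_2\in\mathbb{Z}[x]$, which is well-defined and injective by Corollary~\ref{Gencount}.

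First I would define
\[
C=\{([f]_{p^n},[f']_{p^n})\mid f\in\mathbb{Z}[x]\},
\]
the set of pairs of functions on $\mathbb{Z}_{p^n}$ arising as $([f],[f'])$. To count $C$, I fiber over the first coordinate: for each fixed $F\in\PolFun[\mathbb{Z}_{p^n}]$, Corollary~\ref{14c} (valid since $n>1$) says the fiber $\{([f]_{p^n},[f']_{p^n})\mid f\in\mathbb{Z}[x],\,[f]_{p^n}=F\}$ has cardinality exactly $|\Stab[p^n]|$. Summing over the $|\PolFun[\mathbb{Z}_{p^n}]|$ possible first coordinates yields
\[
|C|=|\PolFun[\mathbb{Z}_{p^n}]|\cdot|\Stab[p^n]|.
\]

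Next I would set up the map
\[
\psi\colon\PolFun[\Zpnalfa]\longrightarrow C\times\PolFun[\mathbb{Z}_{p^n}],\qquad
\psi(G)=\bigl(([g_1]_{p^n},[g_1']_{p^n}),[g_2]_{p^n}\bigr),
\]
where $G$ is induced by $g_1+\alpha g_2$ with $g_1,g_2\in\mathbb{Z}[x]$ (such a representation exists because every polynomial in $\Zm[p^n][x]$ lifts to $\mathbb{Z}[x]$). Corollary~\ref{Gencount} shows that $\psi$ is well-defined and injective, while Theorem~\ref{4} shows surjectivity: given $(([f_1]_{p^n},[f_1']_{p^n}),[f_2]_{p^n})\in C\times\PolFun[\mathbb{Z}_{p^n}]$, the polynomial $f_1+\alpha f_2$ induces a preimage in $\PolFun[\Zpnalfa]$.

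Combining the two steps gives
\[
|\PolFun[\Zpnalfa]|=|C|\cdot|\PolFun[\mathbb{Z}_{p^n}]|=|\PolFun[\mathbb{Z}_{p^n}]|^2\cdot|\Stab[p^n]|,
\]
as claimed. There is no serious obstacle: the only place where the hypothesis $n>1$ enters is the invocation of Corollary~\ref{14c} to count the fibers, which (as the remark following that corollary illustrates) fails for $n=1$.
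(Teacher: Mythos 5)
Your proof is correct and follows essentially the same route as the paper: the set $C$ you define is exactly the paper's set $B$ (written there as a union of fibers), the fiber count via Corollary~\ref{14c} is identical, and the bijection $\psi$ onto $C\times\PolFun[{\Zm[p^n]}]$ is the same map the paper uses. The only cosmetic difference is that you cite Theorem~\ref{4} for surjectivity where the paper appeals to Corollary~\ref{Gencount}; both are fine.
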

\begin{proof}
	Set
	\[
	B\; =
	\!\!\!\bigcup\limits_{ F\in \PolFun[{\Zm[p^n]}] }
	\!\!\!\{([f]_{p^n},[f']_{p^n})\mid [f]_{p^n}=F
	\text{ and } f\in\mathbb{Z}[x] \}.
	\]
	By Corollary~\ref{14c},
	\[|B|=|\PolFun[{\Zm[p^n]}]|\cdot |\Stab[p^n]|.\]
	
	We now define a function
	$\psi\colon \PolFun[\Ralfa] \rightarrow
	B\times \PolFun[{R}]$
	as follows: if $G\in\PolFun[\Ralfa]$ is induced by
	$g=g_1+\alfa g_2$, where $g_1,g_2 \in \Zm[p^n][x]$,
	we let $\psi(G)=(([g_1]_{p^n},[g'_1]_{p^n}),[g_2]_{p^n})$.
	
	By Corollary~\ref{6}, $\psi$ is well-defined and bijective, and, hence,
	$|\PolFun[\Zpnalfa]|=|B|\cdot|\PolFun[{\Zm[p^n]}]|$ .
\end{proof}

As $|\PolFun[{\Zm[p^n]}]|$ is a well-known quantity (quoted in
the introduction in Equation~(\ref{Kemp})), all we now need for
an explicit formula for $\mathcal{F}(\mathbb{Z}_{p^n}[\alpha])$
is an expression
for $|\Stab[p^n]|$. We will derive one for $n\le p$ in the next section.

\section{On the number of polynomial functions on $\Zpnalfa$}\label{sec6}

In this section we find explicit counting formulas for
the number of polynomial functions and the number of polynomial
permutations on 
$\mathbb{Z}_{p^n}[\alpha]$
for $n\le p$. The reason for the
assumption $n\le p$ is that in this case (unlike the case $n>p$)
the ideal of null polynomials on $\mathbb{Z}_{p^n}$ is equal to
$((x^p-x),p)^n$. The equality can be seen by a counting argument
\cite[Corollary~2.5]{per2} ---
the ideal $((x^p-x),p)^n$ is clearly contained in $N_{\mathbb{Z}_{p^n}}$,
and, for $n\le p$, their respective indices in $\mathbb{Z}_{p^n}[x]$
are the same --- but it can also be derived from other
results~\cite[Theorem~3.3 (2)]{Zh04pfpp}.

This fact allows us to see at a glance if a polynomial is a null polynomial
modulo $p^k$ (for any $k\le n$) once we have expanded the polynomial
as a $\mathbb{Z}[x]$-linear combination of the powers $(x^p-x)^m$,
with coefficients of degree less than $p$. Our Lemma to this
effect, Lemma~\ref{15}, is taken from an earlier paper~\cite{per2}.
\begin{remark}\label{usefulrep}
	Let $R$ be a commutative ring and $h\in R[x]$ monic with $\deg h = q >0$.
	\begin{enumerate}
		\item
		Every polynomial $f\in R[x]$ can be represented uniquely as
		\[f(x)=f_0(x)+f_1(x)h(x)+f_2(x)h(x)^2+\ldots\]
		with $f_k\in R[x]$ and $\deg f_k<q$ for all $k\geq 0$.
		\item
		Let $I$ an ideal of $R$.
		Let $f,g\in R[x]$, $f=\sum_{i} a_i x^i$
		and $g=\sum_{i} b_i x^i$ be expanded as in (1)
		with $f_k=\sum_{j=0}^{q-1} a_{jk} x^j$ and
		$g_k=\sum_{j=0}^{q-1} b_{jk} x^j$.
		Then
		\[
		a_i \equiv b_i \bmod I \quad\text{for all}\ i\quad\Longleftrightarrow\quad
		a_{jk} \equiv b_{jk} \bmod I\quad\text{for all}\ j,k.
		\]
	\end{enumerate}
	
	(1) follows easily from repeated division with
	remainder by $h(x)$ and the fact that quotient and remainder are unique
	in polynomial division. (2) follows from the
	uniqueness of the expansion applied to polynomials in $(R/I)[x]$.
\end{remark}
\begin{lemma}[{\cite[Lemma 2.5]{per2}}]\label{15}
	Let $p$ be a prime and $f\in \mathbb{Z}[x]$ represented as
	in Remark~\ref{usefulrep} with respect to $h(x)=x^p-x$.
	\[f(x)=f_0(x)+f_1(x)(x^p-x)+f_2(x)(x^p-x)^2+\ldots\]
	with $f_k\in \mathbb{Z}[x]$ and $\deg f_k<p$ for all $k\geq 0$.
	
	Let $n\le p$. Then $f$ is a null polynomial on
	$\mathbb{Z}_{p^n}$ if and only if $f_k\in p^{n-k}\mathbb{Z}[x]$
	for $0\le k\le n$.
\end{lemma}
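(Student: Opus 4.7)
The plan is to deduce the lemma from the description of the null-polynomial ideal on $\mathbb{Z}_{p^n}$ given in the opening of this section: for $n\le p$, the null ideal $N_{\mathbb{Z}_{p^n}}$ is (the image modulo $p^n$ of) the ideal $((x^p-x),p)^n\subseteq\mathbb{Z}[x]$. Granting this, the lemma becomes a statement about how an element of that ideal decomposes in the unique $(x^p-x)$-adic expansion of Remark~\ref{usefulrep}.

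For the ``if'' direction I would avoid the ideal description and argue directly from Fermat. For any integer $a$, $p\mid a^p-a$, so $p^k\mid(a^p-a)^k$. Assuming $p^{n-k}\mid f_k$ for $0\le k\le n$, each of the terms $f_k(a)(a^p-a)^k$ with $k\le n$ is divisible by $p^{n-k}\cdot p^k=p^n$; for $k>n$, the factor $(a^p-a)^k$ already supplies $p^k\ge p^{n+1}$. Summing, $f(a)\equiv 0\pmod{p^n}$ for every $a\in\mathbb{Z}$, so $f$ is a null polynomial on $\mathbb{Z}_{p^n}$. Notice this half does not use $n\le p$.

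The ``only if'' direction is the substantive half. Assume $f$ is a null polynomial on $\mathbb{Z}_{p^n}$. Using the ideal description, I would write $f(x)=\sum_{k=0}^{n}h_k(x)\,p^{n-k}(x^p-x)^k$ for some $h_k\in\mathbb{Z}[x]$, since the generators of $((x^p-x),p)^n$ are precisely the products $p^{n-k}(x^p-x)^k$. Applying Remark~\ref{usefulrep}(1) separately to each $h_k$ yields $h_k=\sum_j h_{k,j}(x^p-x)^j$ with $\deg h_{k,j}<p$. Substituting and collecting terms by $(x^p-x)$-degree shows that the coefficient of $(x^p-x)^m$ in $f$ is $\sum_{k=0}^{\min(m,n)}p^{n-k}h_{k,m-k}$, which is a polynomial of degree less than $p$. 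By the uniqueness clause of Remark~\ref{usefulrep}(1), this collected coefficient must equal $f_m$. For $0\le m\le n$, every summand carries a factor $p^{n-k}$ with $k\le m$, hence at least $p^{n-m}$, which forces $p^{n-m}\mid f_m$, as required.

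The principal obstacle, and the only place where the hypothesis $n\le p$ enters, is the identification $N_{\mathbb{Z}_{p^n}}=((x^p-x),p)^n$. The containment ``$\supseteq$'' is just the ``if'' direction above; the reverse containment rests on the counting argument comparing $[\mathbb{Z}_{p^n}[x]:N_{\mathbb{Z}_{p^n}}]$ with $[\mathbb{Z}_{p^n}[x]:((x^p-x),p)^n]$ that is cited from earlier literature in the introduction to this section. Once that equality is in hand, everything else is routine bookkeeping in the unique $(x^p-x)$-adic expansion.
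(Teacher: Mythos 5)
Your argument is correct, but it cannot be compared line-by-line with a proof in the paper, because the paper does not prove this lemma: it imports it verbatim from an earlier work (Frisch--Krenn, Lemma~2.5) and, in the paragraph preceding the statement, separately records the identification $N_{\mathbb{Z}_{p^n}}=\left((x^p-x),p\right)^n$ for $n\le p$ via a counting argument. Your route makes that identification the engine of the proof: the ``if'' half is a clean Fermat computation (and, as you note, needs no hypothesis on $n$), while the ``only if'' half writes $f=\sum_{k=0}^{n}h_k\,p^{n-k}(x^p-x)^k$, expands each $h_k$ in the $(x^p-x)$-adic basis, and reads off $f_m=\sum_{k=0}^{\min(m,n)}p^{n-k}h_{k,m-k}$ by the uniqueness clause of Remark~\ref{usefulrep}; the divisibility $p^{n-m}\mid f_m$ then falls out. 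Two remarks on what this buys and what it presupposes. First, there is no circularity lurking in the appeal to the counting argument: the index $\bigl[\mathbb{Z}_{p^n}[x]:\left((x^p-x),p\right)^n\bigr]$ is computed from the purely formal characterization of membership in that ideal (your collected-coefficient identity, read in both directions), which is independent of any statement about null polynomials, and it matches Kempner's count $p^{pn(n+1)/2}$ of polynomial functions exactly when $n\le p$ --- which is also precisely where the hypothesis $n\le p$ enters. Second, your derivation actually proves slightly more than you claim in passing: the same bookkeeping shows that for any $f$, the condition $p^{n-k}\mid f_k$ ($0\le k\le n$) is equivalent to $f\in\left((x^p-x),p\right)^n$, so the lemma is literally the statement that this ideal is the null ideal, translated into coordinates. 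The proof is sound as written.
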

\begin{corollary}\label{cut}
	Let $n\le p$. Then $|\N[(n+1)p]{p^n}|=p^{\frac{n(n+1)p}{2}}$.
\end{corollary}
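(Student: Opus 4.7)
The plan is to combine Remark~\ref{usefulrep} (which expands polynomials uniquely in powers of $x^p-x$) with Lemma~\ref{15} (which characterizes null polynomials on $\mathbb{Z}_{p^n}$ via this expansion), and then read off the count from a simple coefficient-counting.

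First, I would invoke Remark~\ref{usefulrep}(1) with $R=\mathbb{Z}_{p^n}$ and $h(x)=x^p-x$: every $f\in\mathbb{Z}_{p^n}[x]$ has a unique expansion
\[
f(x) \;=\; f_0(x)+f_1(x)(x^p-x)+f_2(x)(x^p-x)^2+\ldots,
\]
with $f_k\in\mathbb{Z}_{p^n}[x]$ and $\deg f_k<p$. The constraint $\deg f<(n+1)p$ kills all $f_k$ with $k\ge n+1$, since such a term would have degree at least $(n+1)p$. Hence $f$ is determined by the $n+1$ polynomials $f_0,f_1,\ldots,f_n$, each of degree less than $p$.

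Next, I would translate the null condition via Lemma~\ref{15}. Pick any lift $\tilde f\in\mathbb{Z}[x]$ of $f$ and lift the $f_k$ accordingly (uniqueness of division with remainder makes this compatible with reduction mod $p^n$). Then $f$ is a null polynomial on $\mathbb{Z}_{p^n}$ iff $\tilde f$ is, and by Lemma~\ref{15} this holds iff $\tilde f_k\in p^{n-k}\mathbb{Z}[x]$ for $0\le k\le n$. Passing back to $\mathbb{Z}_{p^n}[x]$, the condition becomes: every coefficient of $f_k$ lies in the subgroup $p^{n-k}\mathbb{Z}_{p^n}$ of $\mathbb{Z}_{p^n}$, for each $k\in\{0,1,\ldots,n\}$.

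Finally, I would count. For each $k$ with $0\le k\le n$, the subgroup $p^{n-k}\mathbb{Z}_{p^n}\subseteq\mathbb{Z}_{p^n}$ has exactly $p^k$ elements, and $f_k$ has $p$ coefficients (since $\deg f_k<p$), so there are $(p^k)^p=p^{kp}$ admissible choices for $f_k$. By uniqueness of the expansion, these choices are independent, so
\[
|\N[(n+1)p]{p^n}|\;=\;\prod_{k=0}^{n}p^{kp}\;=\;p^{p\sum_{k=0}^{n}k}\;=\;p^{\frac{n(n+1)p}{2}}.
\]
There is no genuine obstacle here; the only point that needs a little care is verifying that the conditions of Lemma~\ref{15} transfer cleanly between $\mathbb{Z}[x]$ and $\mathbb{Z}_{p^n}[x]$ via the lifting, which follows from Remark~\ref{usefulrep}(2) applied with $I=p^n\mathbb{Z}$.
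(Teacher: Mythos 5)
Your proof is correct and follows essentially the same route as the paper: expand $f$ in powers of $x^p-x$ via Remark~\ref{usefulrep}, apply Lemma~\ref{15} to see that the coefficients of $f_k$ must lie in $p^{n-k}\mathbb{Z}_{p^n}$, and multiply the $p^{kp}$ independent choices for each $k=0,\ldots,n$. The only cosmetic difference is that you count directly in $\mathbb{Z}_{p^n}$ while the paper counts representatives modulo $p^n$ in $\mathbb{Z}$; these are the same count.
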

\begin{proof}
	We express $f\in\mathbb{Z}[x]$ with $\deg f <(n+1)p$
	as in Remark~\ref{usefulrep},
	Lemma~\ref{15},
	$f(x)=\sum_{k=0}^{n}f_k(x) (x^p-x)^k$,
	where $f_k(x)=\sum_{j=0}^{p-1} a_{jk} x^j$.
	
	By Lemma~\ref{15} and Remark~\ref{usefulrep}~(2),
	$|\N[(n+1)p]{p^n}|$ is equal to the number of ways to choose the
	$a_{jk}$ from a fixed system of representatives modulo $ p^n $,
	such that $ a_{jk}\equiv 0 \mod{p^{(n-k)}}$ for $k\le n$.
	This number is
	$\prod_{k=0}^{n}p^{kp}=p^{p\sum_{k=0}^{n}k}=p^{\frac{n(n+1)p}{2}}$.
\end{proof}
\begin{lemma}\label{-16}
	Let $f\in\mathbb{Z}[x]$, where
	$f(x)=\sum_{k\geq 0}f_k(x) (x^p-x)^k$
	such that $f_k(x)=\sum_{j=0}^{p-1} a_{jk} x^j$.
	If we expand $f'$ in a similar way,
	$f'(x)=\sum_{k\geq 0} \hat f_k(x) (x^p-x)^k$, where
	$\hat f_k(x)=\sum_{j=0}^{p-1} \hat a_{jk} x^j,$
	then the following relations hold for all $k\ge 0$
	\begin{equation}\label{eq}
		\begin{split}
			\hat a_{0k} &= (kp+1)a_{1k} - (k+1)a_{0\,k+1}
			\\
			\hat a_{jk} &= (kp+j+1)a_{j+1\,k} + (k+1)(p-1)a_{j\,k+1}\qquad
			\textnormal{ for } 1\le j\le p-2
			\\
			\hat a_{p-1\,k} &= (k+1)(p-1)a_{p-1\,k+1} +
			(k+1)pa_{0\,k+1}.
		\end{split}
	\end{equation}
\end{lemma}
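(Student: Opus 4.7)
The plan is to compute $f'$ from the product rule and then re-expand the result in the basis $\{(x^p-x)^k\}_{k\ge 0}$, so that the resulting degree-$<p$ coefficients can be read off against $\hat f_k$.

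First, termwise differentiation and reindexing the summation coming from the chain rule applied to $(x^p-x)^k$ give
\[
f'(x) = \sum_{k\ge 0}\bigl[f_k'(x) + (k+1)(px^{p-1}-1)f_{k+1}(x)\bigr](x^p-x)^k.
\]
The inner bracket is not yet in canonical form: $f_k'$ has degree $<p$, but $(px^{p-1}-1)f_{k+1}$ may have degree up to $2p-2$, so a single division by $(x^p-x)$ is needed.

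Second, I would perform this division explicitly by using the identity $x^{p-1+j} = x^{j-1}(x^p-x) + x^j$, valid for $j\ge 1$. Applied to $x^{p-1}f_{k+1}(x)$ and combined with the $-f_{k+1}(x)$ term, this gives $(px^{p-1}-1)f_{k+1}(x) = q_k(x)(x^p-x) + r_k(x)$ with both $q_k,r_k$ of degree $<p$, where
\[
q_k(x) = p\sum_{i=0}^{p-2} a_{i+1,\,k+1}\,x^i,
\]
and the coefficients of $r_k$ are, by direct computation, $-a_{0,k+1}$ at $x^0$, $(p-1)a_{j,k+1}$ at $x^j$ for $1\le j\le p-2$, and $(p-1)a_{p-1,k+1}+p\,a_{0,k+1}$ at $x^{p-1}$.

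Third, I would substitute these back. The remainder $(k+1)r_k$ stays at level $k$, while the quotient contribution $(k+1)q_k(x^p-x)^{k+1}$ shifts one level up and reappears in $\hat f_k$ as $k\,q_{k-1}$. Therefore
\[
\hat f_k = f_k' + (k+1)r_k + k\,q_{k-1},
\]
and by the uniqueness of the expansion (Remark~\ref{usefulrep}(1)) we may read off $\hat a_{jk}$ as the coefficient of $x^j$ on the right. For $j=0$ the contributions $a_{1k}$ from $f_k'$ and $kp\,a_{1k}$ from $k\,q_{k-1}$ combine to the $(kp+1)a_{1k}$ term, together with $-(k+1)a_{0,k+1}$ from $(k+1)r_k$; for $1\le j\le p-2$ the three summands combine as $(j+1+kp)a_{j+1,k} = (kp+j+1)a_{j+1,k}$ plus $(k+1)(p-1)a_{j,k+1}$; and for $j=p-1$ both $f_k'$ and $q_{k-1}$ vanish in that degree, leaving only the two terms of $(k+1)r_k$.

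The only mildly delicate point is keeping the bookkeeping straight at $x^{p-1}$, where the remainder $r_k$ collects both the ``leading'' $p\,a_{0,k+1}x^{p-1}$ coming from $p x^{p-1}\cdot a_{0,k+1}$ (the $j=0$ summand, which is already of degree $p-1$ and so never gets reduced) and the $(p-1)a_{p-1,k+1}x^{p-1}$ coming from the $j=p-1$ reduction $x^{2p-2}\equiv x^{p-1}$; everything else is straightforward linear algebra.
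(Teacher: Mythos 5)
Your proof is correct and follows essentially the same route as the paper's: differentiate termwise, reduce the degree-$(\ge p)$ part of $x^{p-1}f_{k+1}$ via $x^{p-1+j}=x^{j-1}(x^p-x)+x^j$, and collect coefficients at each level using uniqueness of the expansion. The only difference is organizational --- you reindex first and package the reduction as a single division with remainder of $(px^{p-1}-1)f_{k+1}$ by $(x^p-x)$, which is a slightly tidier bookkeeping of the identical computation; all three coefficient formulas check out, including the delicate $x^{p-1}$ case.
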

\begin{proof} Consider
	\begin{equation}\label{f'expres}
		\left(f_k(x)(x^p-x)^k\right)'=f_k'(x)(x^p-x)^k -
		k f_k(x)(x^p-x)^{k-1} + kpx^{p-1} f_k(x) (x^p-x)^{k-1}.
	\end{equation}
	
	We rewrite the last term of Equation~(\ref{f'expres}) by
	expanding $x^{p-1}f_k(x)$ as $\sum_{j=0}^{p-1} a_{jk}x^{p+j-1}$
	and substituting $x^{j+1}+ x^j (x^p-x)$ for $x^{p+j}$,
	to get integer linear-combinations of terms $x^j(x^p-x)^k$.
	\begin{align*}
		kp x^{p-1}  f_k(x) (x^p-x)^{k-1} & =
		\sum_{j=0}^{p-1}kpa_{jk} x^{p+j-1}(x^p-x)^{k-1}
		\\
		& =\bigg(\sum_{j=1}^{p-1} kpa_{jk}x^{p+j-1} +
		kpa_{0k} x^{p-1}\bigg) (x^p-x)^{k-1}
		\\
		&=\bigg(\sum_{j=1}^{p-1} kpa_{jk}(x^j + x^{j-1}(x^p-x))
		+ kpa_{0k} x^{p-1}\bigg) (x^p-x)^{k-1}
		\\
		& =\bigg(\sum_{j=1}^{p-2} kpa_{jk}x^j + (kpa_{p-1\, k}
		+ kpa_{0k}) x^{p-1}\bigg) (x^p-x)^{k-1}
		\\
		&\qquad + \bigg(\sum_{j=0}^{p-2} kpa_{j+1\,k}x^{j}\bigg) (x^p-x)^k
	\end{align*}
	and, therefore,
	\begin{align}
		\left(f_k(x)(x^p-x)^k\right)' & =
		\bigg( -ka_{0k} + \sum_{j=1}^{p-2} k(p-1) a_{jk}x^j +
		(k(p-1) a_{p-1\,k} + kpa_{0k})x^{p-1}\bigg)(x^p-x)^{k-1}\nonumber
		\\
		& \qquad  +  \bigg(\sum_{j=0}^{p-2} (kp+j+1) a_{j+1\,k}x^j
		\bigg) (x^p-x)^k.
	\end{align}
	Thus $f'(x)  = \sum_{k\geq 0}(f_k(x)(x^p-x)^k)' =
	\sum_{k= 0}\hat f_k(x)(x^p-x)^k$, where
	
	\begin{align*}
		\hat f_k(x) & =	(kp+1)a_{1k} - (k+1)a_{0\,k+1} +
		\sum_{j=1}^{p-2} \big((kp+j+1) a_{j+1\, k}
		+(k+1)(p-1) a_{j\, k+1}\big) x^j
		\\
		&\qquad +	((k+1)(p-1) a_{p-1\,k+1} + (k+1)p a_{0\,k+1}) x^{p-1}.
	\end{align*}
	
	Finally, expressing the $\hat a_{jk}$ in terms of the $a_{jk}$, we get
	\begin{alignat*}{2}
		\hat a_{0k} &= (kp+1)a_{1k} - (k+1)a_{0\,k+1},\\
		\hat a_{jk} &= (kp+j+1)a_{j+1\,k} + (k+1)(p-1)a_{j\,k+1}\qquad
		&&\textrm{ for } 1\le j\le p-2,
		\\
		\hat a_{p-1\,k} &= (k+1)(p-1)a_{p-1\,k+1} +
		(k+1)pa_{0\,k+1}\qquad  &&\textrm{ for } k\geq 0.
	\end{alignat*}
\end{proof}

Let $f\in \mathbb{Z}[x]$, $p$ a prime and $n\le p$. We are now in a
position to tell from the coefficients of the expansion of $f$ with
respect to powers of $(x^p-x)$ (as in Remark~\ref{usefulrep})
whether both $f$ and $f'$ are null polynomials on $\mathbb{Z}_{p^n}$.

\begin{theorem}\label{160}
	Let $n\le p$ and
	$f(x)=\sum_{k=0}^{m}f_k(x)(x^p-x)^k\in\mathbb{Z}[x]$,
	where $f_k(x)=\sum_{j=0}^{p-1} a_{jk} x^j$.
	
	Then $f$ and $f'$ are both null polynomials on $\mathbb{Z}_{p^n}$
	if and only if, for $1\leq k<\min(p,n+1)$,
	\begin{equation}\label{eiq1}
		\aligned
		a_{j0}& \equiv 0 \pmod{p^n}
		\\
		a_{jk}& \equiv  0 \pmod{p^{n-k+1}}.
		\endaligned
	\end{equation}
\end{theorem}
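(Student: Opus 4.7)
The strategy is to apply Lemma~\ref{15} to both $f$ and $f'$, using the explicit expansion of $f'$ from Lemma~\ref{-16}, and then to translate the resulting congruences on the $a_{jk}$ and $\hat a_{jk}$ into the target conditions~(\ref{eiq1}) via a short induction on $k$. By Lemma~\ref{15}, which applies since $n \le p$, the polynomial $f$ is null on $\mathbb{Z}_{p^n}$ if and only if $a_{jk} \equiv 0 \pmod{p^{n-k}}$ for every $0 \le j \le p-1$ and $0 \le k \le n$; applying the same lemma to the expansion $f' = \sum_{k \ge 0} \hat f_k (x^p - x)^k$ from Lemma~\ref{-16} shows that $f'$ is null on $\mathbb{Z}_{p^n}$ if and only if $\hat a_{jk} \equiv 0 \pmod{p^{n-k}}$ for every such pair $(j,k)$.

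For the easy direction, I assume the congruences~(\ref{eiq1}) hold. Since $p^{n-k+1} \mid p^{n-k}$ for $k \ge 1$, together with the hypothesis at $k = 0$, the divisibilities needed for $f$ to be null are all satisfied. For $f'$ null, I substitute~(\ref{eiq1}) into each of the three parts of~(\ref{eq}): in the first two relations $\hat a_{j, k-1}$ is a sum of an $a_{\bullet, k-1}$-term divisible by $p^{n-k+2}$ and an $a_{\bullet, k}$-term divisible by $p^{n-k+1}$; in the third relation, the extra factor of $p$ attached to $a_{0, k+1}$ compensates for the weaker divisibility at the top index, so that in every case $\hat a_{j, k-1} \equiv 0 \pmod{p^{n-k+1}}$, as required.

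For the harder direction, I induct on $k$ from $0$ up to $\min(n, p-1)$. The base $k = 0$ is immediate from $f$ null at level $0$. For the inductive step, solving the three parts of~(\ref{eq}) at index $k-1$ for the unknown $a_{\bullet, k}$ yields
\begin{align*}
k \cdot a_{0,k} &= ((k-1)p+1)\,a_{1,k-1} - \hat a_{0,k-1},\\
k(p-1)\cdot a_{j,k} &= \hat a_{j,k-1} - ((k-1)p+j+1)\,a_{j+1,k-1} \quad (1 \le j \le p-2),\\
k(p-1)\cdot a_{p-1, k} &= \hat a_{p-1, k-1} - kp \cdot a_{0,k}.
\end{align*}
The inductive hypothesis $a_{\bullet, k-1} \equiv 0 \pmod{p^{n-k+2}}$ (read as $\pmod{p^n}$ when $k = 1$) together with $\hat a_{j, k-1} \equiv 0 \pmod{p^{n-k+1}}$ makes each right-hand side divisible by $p^{n-k+1}$. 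Since $1 \le k \le p-1$, both $k$ and $k(p-1)$ are units modulo $p$, so $a_{j,k} \equiv 0 \pmod{p^{n-k+1}}$ follows. The third relation must be handled after $a_{0,k}$ has been established, so that the term $kp\cdot a_{0,k}$ is already known to be divisible by $p^{n-k+2}$ and is therefore absorbed.

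The main obstacle is the bookkeeping: tracking the divisibilities of $a_{\bullet, k-1}$, $\hat a_{\bullet, k-1}$, and the combinatorial coefficients across the three branches of~(\ref{eq}), and in particular processing $j = p-1$ only after $j = 0$ so that the extra $kp \cdot a_{0,k}$ term is absorbed cleanly. Throughout, the restriction $k < p$ is essential for ensuring that $k$ and $k(p-1)$ are units modulo $p$, so that dividing by them modulo $p^{n-k+1}$ is legitimate.
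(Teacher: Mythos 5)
Your proof is correct and follows essentially the same route as the paper's: both directions rest on Lemma~\ref{15} applied to $f$ and to the expansion of $f'$ from Lemma~\ref{-16}, with the forward direction obtained by solving the recurrences~(\ref{eq}) for the $a_{\bullet,k}$ and inducting on $k$, using that $k$ and $k(p-1)$ are units modulo $p$ for $k<p$ and treating $j=p-1$ after $j=0$ so the $kp\,a_{0,k}$ term is absorbed --- exactly the paper's Equation~(\ref{eqmodify}) argument. The only blemishes are cosmetic (e.g.\ writing ``$p^{n-k+1}\mid p^{n-k}$'' for the implication between the two divisibility conditions, and a shifted index in the discussion of the third relation), and your level of detail in the converse direction matches the paper's.
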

\begin{proof}
	($\Rightarrow$) Suppose $f$ and $f'$ are null polynomials
	on $\mathbb{Z}_{p^n}$. Then $f'(x)=\sum_{k= 0}^{m} \hat f_k(x) (x^p-x)^k$,
	with $\hat f_k(x)=\sum_{j=0}^{p-1} \hat a_{jk} x^j$, such that,
	by Lemma~\ref{-16}, the coefficients $ a_{jk} $ and $\hat a_{jk}$ satisfy
	Equation~(\ref{eq}).
	Since $f'$ is a null polynomial on $\mathbb{Z}_{p^n}$,
	Lemma~\ref{15} implies, for $j=0,\ldots,p-1$,
	\begin{equation}\label{0cof}
		\hat a_{jk}\equiv 0\pmod{p^{n-k}}\qquad\text{ for }k\le n .
	\end{equation}
	
	Again by Lemma~\ref{15}, it is clear that
	\begin{equation}\label{dreivcofn}
		a_{j0}\equiv 0\pmod{p^n}\qquad \text{ for }j=0,1,\ldots,p-1.
	\end{equation}
	
	For $1\le k<\min(p,n+1)$, we use induction.
	To see $a_{j1}\equiv 0\pmod{p^n}$,
	we set $k=0$ in  Equation~(\ref{eq}), and get
	\begin{equation}\label{k=0}
		\begin{split}
			\hat a_{00} &= a_{10} - a_{0\,1},
			\\
			\hat a_{j0} &= (j+1)a_{j+1\,0} + (p-1)a_{j\,1}
			\qquad\textrm{for}\  1\le j\le p-2,
			\\
			\hat a_{p-1\,0} &= (p-1)a_{p-1\,1} + pa_{0\,1}.
		\end{split}
	\end{equation}
	From Equations~(\ref{0cof}),~(\ref{dreivcofn}), and~(\ref{k=0}),
	we conclude that $a_{j1}\equiv 0\pmod{p^n},\ j=0,1,\ldots,p-1$.
	
	Now, for $2\le k+1<\min(p,n+1)$, we prove the statement for $k+1$
	under the hypothesis
	\begin{equation}\label{indhypo}
		a_{jk}\equiv 0 \pmod{p^{n+1-k}}\qquad \text{ for }j=0,1,\ldots,p-1.
	\end{equation}
	We rewrite Equation~(\ref{eq}) as
	\begin{alignat}{2}\label{eqmodify}
		(k+1)a_{0\,k+1} &=(kp+1)a_{1k} -\hat a_{0k}
		\nonumber\\
		(k+1)(p-1)a_{j\,k+1} &=\hat a_{jk}  - (kp+j+1)a_{j+1\,k}
		\quad&&\textnormal{for }\  1\le j\le p-2
		\\
		(k+1)(p-1)a_{p-1\,k+1} &= \hat a_{p-1\,k} -
		(k+1)pa_{0\,k+1}\quad&&\textnormal{for }\	k=0,1,\ldots,n-1.\nonumber
	\end{alignat}
	Since $k+1<p$ and $n+1-k>n-k$, Equations~(\ref{eqmodify}),~(\ref{0cof})
	and the induction hypothesis (Equation~(\ref{indhypo})) give
	\[
	a_{j\,k+1}\equiv 0 \pmod{p^{n-k}}\qquad \text{ for } j=0,1,\ldots,p-1.
	\]
	
	For $k\geq\min(p,n+1)$, we note that
	$(x^p-x)^k\in \Nulld[\mathbb{Z}_{p^n}]$\!. Hence
	$f_k(x)(x^p-x)^k\in  \Nulld[{\mathbb{Z}_{p^n}}]$.
	So, there are no restrictions on $a_{jk}$ for $j=0,\ldots,p-1$.
	
	($\Leftarrow$) Assume that (\ref{eiq1}) is true. Then, for $k\le p$,
	$a_{jk}\equiv 0 \pmod{p^{(n-k)}}$ since $n+1-k>n-k$.
	We use Lemma~\ref{-16} and Equation (\ref{eiq1}) to show that
	$\hat a_{jk}\equiv 0\pmod{p^{(n-k)}}$ for $0\le k\le p$.
	The result now follows by Lemma~\ref{15}.
\end{proof}
\begin{corollary} \label{15.1}
	Let $n\le p$ and $r=\min(n+1, p)$, that is,
	$r=\begin{cases}
	n+1 & \textnormal{ if } n<p\\
	p & \textnormal{ if } n=p
	\end{cases}.$
	
	Then $(x^p-x)^r$ is a monic null polynomial on
	$\mathbb{Z}_{p^n}[\alfa]$ of minimal degree.
\end{corollary}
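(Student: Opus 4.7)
The plan is to verify two separate assertions: first, that $(x^p-x)^r$ is a null polynomial on $\mathbb{Z}_{p^n}[\alfa]$, and second, that no monic null polynomial on $\mathbb{Z}_{p^n}[\alfa]$ has degree smaller than $pr$.

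For the first assertion, I would read $(x^p-x)^r$ directly in the canonical expansion of Remark~\ref{usefulrep} with respect to $h(x)=x^p-x$: only the component $f_r(x)=1$ is nonzero, and all coefficients $a_{jk}$ with $k<r=\min(p,n+1)$ vanish. The divisibility conditions~(\ref{eiq1}) of Theorem~\ref{160} are therefore satisfied trivially, so both $(x^p-x)^r$ and its derivative are null polynomials on $\mathbb{Z}_{p^n}$. Lemma~\ref{31} then concludes that $(x^p-x)^r$ is a null polynomial on $\mathbb{Z}_{p^n}[\alfa]$.

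For the minimality assertion, I would argue by contradiction. Suppose $g\in\mathbb{Z}_{p^n}[\alfa][x]$ is a monic null polynomial on $\mathbb{Z}_{p^n}[\alfa]$ with $\deg g=d<pr$. Writing $g=g_1+\alfa g_2$ with $g_1,g_2\in\mathbb{Z}_{p^n}[x]$, the fact that $\alfa$ is a zero-divisor of $\mathbb{Z}_{p^n}[\alfa]$ forces $g_1$ itself to be monic of degree $d$. By Corollary~\ref{nullrestat} and Lemma~\ref{31}, both $g_1$ and $g_1'$ induce the zero function on $\mathbb{Z}_{p^n}$. I would then lift $g_1$ to $\mathbb{Z}[x]$ and expand it via Remark~\ref{usefulrep} as $g_1(x)=\sum_{k\ge 0}\tilde g_k(x)(x^p-x)^k$ with $\deg\tilde g_k<p$. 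Let $k_0$ be the largest index with $\tilde g_{k_0}\not\equiv 0\pmod{p^n}$; the estimate $\deg\bigl(\tilde g_k(x)(x^p-x)^k\bigr)\le pk+p-1<pk_0$ for $k<k_0$ guarantees that the leading coefficient of $g_1$ coincides with the leading coefficient $a_{j_0,k_0}$ of $\tilde g_{k_0}$. Monicity then forces $a_{j_0,k_0}\equiv 1\pmod{p^n}$. On the other hand, $d<pr$ gives $k_0\le r-1$, so Theorem~\ref{160} forces $a_{j_0,k_0}\equiv 0\pmod{p^{n-k_0+1}}$ (or $\pmod{p^n}$ if $k_0=0$), and in particular $a_{j_0,k_0}\equiv 0\pmod{p}$, contradicting $a_{j_0,k_0}\equiv 1\pmod{p^n}$.

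The essential content of the proof is already supplied by Theorem~\ref{160}; the only nontrivial bookkeeping step is the identification of the leading coefficient of $g_1$ with that of $\tilde g_{k_0}$, which follows from the degree estimate above. After that identification no further obstacle remains, because the divisibility-by-$p$ conclusion of Theorem~\ref{160} is plainly incompatible with monicity whenever $k_0<r$.
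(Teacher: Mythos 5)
Your proposal is correct and follows essentially the same route as the paper: both parts rest on Theorem~\ref{160} applied to the $(x^p-x)$-adic expansion of Remark~\ref{usefulrep}, with the reduction to the first coordinate $g_1\in\mathbb{Z}_{p^n}[x]$ via Corollary~\ref{nullrestat}/Lemma~\ref{31} and the observation that the divisibility conditions~(\ref{eiq1}) force every coefficient of the top nonzero block to be divisible by $p$, contradicting monicity. Your explicit degree estimate identifying the leading coefficient of $g_1$ with that of $\tilde g_{k_0}$ is in fact slightly more careful than the paper's own phrasing of that step.
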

\begin{proof}
	By Lemma~\ref{31}, $(x^p-x)^r$ is a null polynomial on $\Zpnalfa$.
	Let $h\in \mathbb{Z}[\alfa][x]$ be a null polynomial on
	$\Zpnalfa$ with  $\deg h<rp$. By Corollary~\ref{nullrestat},
	it suffices to consider $h\in \mathbb{Z}[x]$. We show that $h$
	is not monic. If $h=0$ this is evident. If $h\ne 0$, we
	expand  $h$ as in Lemma~\ref{15}:
	\[
	h(x)=h_0(x)+h_1(x)(x^p-x)+\cdots+ h_{r-1}(x)(x^p-x)^{r}
	\]
	with $h_k(x)=\sum_{j=0}^{p-1} a_{jk} x^j\in \mathbb{Z}[x]$.
	By Theorem~\ref{160}, it follows  that for $0\le j\le p-1$
	\begin{align*}
		a_{j0}& \equiv 0 \pmod{p^{n}},
		\\
		a_{jk}& \equiv 0 \pmod{p^{(n-k+1)}}\quad
		\text{for }  1\leq k<r. \nonumber
	\end{align*}
	If $l$ is  the largest number such that $h_l(x)\ne 0$,
	then $a_{p-1\,l}\ne 1$, since
	$a_{p-1\,l}\equiv 0 \mod{p^{(n-l+1)}}$. Thus $h$ cannot be monic.
\end{proof}

Recall from Definitions~\ref{020} and \ref{11.12} that
$f\in \Nd[(n+1)p]{p^n}$ means $f$ and $f'$ are null
polynomials on $\mathbb{Z}_{p^n}$ and $\deg f<(n+1)p$.												 \begin{corollary}\label{16}
	Let $n\le p$. Then
	$|\Nd[(n+1)p]{p^n}|=\begin{cases}p^{\frac{n(n-1)p}{2}}
	& \textnormal{if } n<p
	\\[1ex]
	p^ {\frac{(n^2-n+2)p}{2}} & \textnormal{if } n=p
	\end{cases}.$
\end{corollary}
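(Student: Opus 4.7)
The plan is to read off the answer from Theorem~\ref{160} by expanding each candidate polynomial in the canonical form of Remark~\ref{usefulrep} and counting coefficients. I would first note that every $f \in \mathbb{Z}_{p^n}[x]$ with $\deg f < (n+1)p$ has a unique representation
\[
f(x) \;=\; \sum_{k=0}^{n} f_k(x)\,(x^p-x)^k, \qquad f_k(x) \;=\; \sum_{j=0}^{p-1} a_{jk}\,x^j,
\]
with $a_{jk} \in \mathbb{Z}_{p^n}$, so the residues $a_{jk}$ modulo $p^n$ form an independent system of coordinates on the additive group $\N[(n+1)p]{p^n}$. Lifting to $\mathbb{Z}[x]$ and invoking Theorem~\ref{160}, the polynomial $f$ belongs to $\Nd[(n+1)p]{p^n}$ exactly when $a_{j0} \equiv 0 \pmod{p^n}$ for every $j$, and $a_{jk} \equiv 0 \pmod{p^{n-k+1}}$ for every $j$ and every $k$ with $1 \le k < \min(p, n+1)$; the remaining coefficients (those $a_{jk}$ with $\min(p, n+1) \le k \le n$, if any) are completely free.

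Each congruence $a_{jk} \equiv 0 \pmod{p^{n-k+1}}$ has $p^{k-1}$ solutions in $\mathbb{Z}_{p^n}$, the congruence $a_{j0} \equiv 0 \pmod{p^n}$ has a single solution, and any free coefficient contributes a factor of $p^n$. Multiplying across $j = 0, \ldots, p-1$ and $k = 0, \ldots, n$ then splits into two cases. If $n < p$, then $\min(p, n+1) = n+1$, every $f_k$ with $1 \le k \le n$ is constrained, and no free block appears, giving
\[
|\Nd[(n+1)p]{p^n}| \;=\; 1 \cdot \prod_{k=1}^{n} p^{(k-1)p} \;=\; p^{p\,n(n-1)/2}.
\]
If $n = p$, then $\min(p, n+1) = p = n$, so $f_1, \ldots, f_{n-1}$ are constrained while $f_n$ is free, contributing the extra factor $p^{np}$; the resulting exponent simplifies via $(n-1)(n-2)/2 + n = (n^2 - n + 2)/2$ to $p^{(n^2-n+2)p/2}$.

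No step presents a genuine obstacle; the work is essentially bookkeeping once Theorem~\ref{160} is in hand. The point that requires care is the transition at $n = p$: the block $f_n$ escapes the congruences of Theorem~\ref{160} because $k = n$ falls just outside the range $1 \le k < \min(p, n+1) = p$, and this is precisely what produces the additional factor $p^{np}$ and the different exponent in the second case of the formula.
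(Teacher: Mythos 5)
Your proof is correct and follows essentially the same route as the paper's: expand via Remark~\ref{usefulrep}, read off the congruence conditions from Theorem~\ref{160}, and count the admissible coefficients, with the case $n=p$ handled exactly as in the paper by noting that the block $f_n$ escapes the constraints and contributes the free factor $p^{np}$.
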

\begin{proof}
	We represent every polynomial $f\in \mathbb{Z}_{p^n}[x]$
	with $\deg f < (n+1)p$ uniquely, by Remark~\ref{usefulrep}, as
	\[
	f(x)=\sum_{k= 0}^{n}f_k(x) (x^p-x)^k \qquad\textrm{with}\
	f_k(x)=\sum_{j=0}^{p-1} a_{jk} x^j \in\mathbb{Z}_{p^n}[x].
	\]
	By Theorem~\ref{160}, counting the polynomials in $\Nd[(n+1)p]{p^n}$
	amounts to counting the number of choices for the $a_{jk}$ such that
	$a_{j0} \equiv  0 \mod{p^n}$ and $a_{jk} \equiv 0 \mod{p^{n-k+1}}$
	for $1\leq k<\min(p,n+1)$ and $0\le j\le p-1$.
	
	When $n<p$, there are $p^{k-1}$ choices for $a_{jk}$ for
	each pair $(j,k)$ with $1\le k\le n$ and $0\le j\le p-1$.
	Hence the total number of ways of choosing all coefficients,
	when $n<p$, is equal to
	\[
	\prod_{k=1}^{n}p^{p(k-1)}=\prod_{k=0}^{n-1}p^{pk}=
	p^{p\sum_{k=0}^{n-1}k}=p^{\frac{pn(n-1)}{2}}.
	\]
	When $n=p$, $a_{jn}$ can be chosen in $p^n$ ways, and the resulting
	total is
	\begin{equation*}
		p^{np}\prod_{k=1}^{n-1}p^{p(k-1)}=p^{np}\prod_{k=0}^{n-2}p^{pk}
		=p^{np+p\sum_{k=0}^{n-2}k}=p^{\frac{p(n^2-n+2)}{2}}.\qedhere
	\end{equation*}
\end{proof}

At last, we  obtain an explicit formula for the order of
$\Stab[p^n]$ for $n\le p$.
\begin{theorem} \label{st}
	Let $1\le n\le p$. Then
	\[
	|\Stab[p^n]|= \begin{cases}
	(p-1)^p &   \textnormal{if }  n=1
	\\
	p^{np} & \textnormal{if } 1<n<p
	\\
	p^{(n-1)p}  & \textnormal{if } n=p
	\end{cases}.
	\]
\end{theorem}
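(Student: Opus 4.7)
The proof is essentially a bookkeeping exercise, since all the heavy lifting has been done in the preceding sections. The plan is to split into three cases according to the value of $n$ and invoke the explicit counting results already established.

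First, the case $n=1$ is not covered by Corollary~\ref{13} (which requires $n>1$), but it has already been settled directly in Theorem~\ref{1301}(3), where we showed $|\Stab[p]|=(p-1)^p$. So I would dispose of this case with a one-line reference.

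For $1<n\le p$, the strategy is to apply Corollary~\ref{13}(2), which gives
\[
|\Stab[p^n]| = \frac{|\N[(n+1)p]{p^n}|}{|\Nd[(n+1)p]{p^n}|}.
\]
The numerator has been computed in Corollary~\ref{cut}:
\[
|\N[(n+1)p]{p^n}| = p^{\frac{n(n+1)p}{2}}.
\]
The denominator has been computed in Corollary~\ref{16}, where the two cases $n<p$ and $n=p$ must be distinguished. Substituting these values and simplifying the resulting exponents yields, for $1<n<p$,
\[
|\Stab[p^n]| = \frac{p^{n(n+1)p/2}}{p^{n(n-1)p/2}} = p^{\frac{p(n(n+1)-n(n-1))}{2}} = p^{np},
\]
and for $n=p$,
\[
|\Stab[p^p]| = \frac{p^{p(p+1)p/2}}{p^{(p^2-p+2)p/2}} = p^{\frac{p(p(p+1) - (p^2-p+2))}{2}} = p^{\frac{p(2p-2)}{2}} = p^{(p-1)p}.
\]

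There is no real obstacle here; the only thing to watch is making sure the exponent arithmetic is presented cleanly and that the case distinction in Corollary~\ref{16} is correctly matched to the case distinction in the conclusion. In particular, one should observe why $n=p$ is special: it is precisely because at $k=n=p$ the polynomial $(x^p-x)^k$ already lies in $\Nulld[\mathbb{Z}_{p^n}]$, so the coefficients $a_{jn}$ carry no restriction from Theorem~\ref{160}, producing the extra factor $p^{np}$ in the denominator that accounts for the reduced answer $p^{(p-1)p}$ instead of the naive $p^{p^2}$.
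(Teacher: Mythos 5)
Your proposal is correct and follows exactly the paper's own route: the case $n=1$ is delegated to Theorem~\ref{1301}(3), and for $1<n\le p$ the order of $\Stab[p^n]$ is computed as the quotient $|\N[(n+1)p]{p^n}|/|\Nd[(n+1)p]{p^n}|$ via Corollary~\ref{13}(2), with the two cardinalities taken from Corollaries~\ref{cut} and~\ref{16}. The exponent arithmetic in both cases checks out, and your closing remark about why $n=p$ is special is a correct (and welcome) gloss on the source of the case distinction in Corollary~\ref{16}.
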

\begin{proof}
	The case $n=1$ is a special case of Theorem~\ref{1301}~(3).
	Let $1<n\le p$.	By Corollary~\ref{13},
	\begin{align*}
		|\Stab[p^n]| & =\frac{|\N[(n+1)p]{p^n}|}{|\Nd[(n+1)p]{p^n}|}.
	\end{align*}
	Now Corollaries~\ref{cut} and \ref{16}, respectively, say that
	\begin{equation*}
		|\N[(n+1)p]{p^n}|=p^\frac{n(n+1)p}{2} \qquad\textrm{and}\qquad
		|\Nd[(n+1)p]{p^n}|=
		\begin{cases}p^{\frac{n(n-1)p}{2}} & \textnormal{if } n<p
			\\
			p^ {\frac{(n^2-n+2)p}{2}} & \textnormal{if } n=p
		\end{cases}.\qedhere
	\end{equation*}
\end{proof}
\begin{example}
	Let $R=\mathbb{Z}_{4}$. Then $|\Stab[4]|=4$ by Theorem~\ref{st}.
	Now, by Corollary~\ref{15.1}, the polynomial $(x^2-x)^2$ is a
	monic 	null polynomial on $\mathbb{Z}_{4}[\alfa]$ of minimal degree.
	So every polynomial function on  $\mathbb{Z}_{4}[\alfa]$ can
	be represented by a polynomial of degree less than $4$.
	Consider the following null polynomials on $\mathbb{Z}_{4}$:
	\[
	f_1=0,\quad f_2 =2(x^2-x),\quad f_3=2(x^3-x),\quad f_4= 2(x^3-x^2).
	\]
	It is evident that $[x+f_i]_4= id_{\mathbb{Z}_{4}}$, and so by
	Corollary~\ref{corof11},
	$[x+f_i]\in \Stab[4]$, where $[x+f_i]$ denotes the function
	induced by $x+f_i$ on $\mathbb{Z}_{4}[\alfa]$ for $i=1,\ldots,4$.
	Note that  $[1+f'_i]_4\ne [1+f'_j]_4$, however,
	and hence by Corollary~\ref{Gencount},  $[x+f_i]\ne [x+f_j]$ whenever
	$i\ne j$. Therefore  $\Stab[4]=\{[x+f_i],\linebreak i=1,\ldots,4\}$.
	Actually,  $\Stab[4]$ is the Klein 4-group.
\end{example}

Theorem~\ref{st} now allows us to state explicit formulas for the
number of polynomial functions and polynomial permutations on
$\Zpnalfa$ for $n\le p$. Our formula for $|\PrPol[\Zpnalfa]|$
depends on $p$ and $n$. To understand it in terms of the residue
field and nilpotency of the maximal ideal of $\Zpnalfa$,
recall from Proposition~\ref{0} that the residue field of $\Zpnalfa$
is isomorphic to $\mathbb{Z}_{p}$ and the nilpotency of the
maximal ideal is $n+1$.
\begin{theorem} \label{countperm}
	Let  $1\le n\le p$. Then  the number $|\PrPol[\Zpnalfa]|$ of
	polynomial permutations on $\Zmalfa[p^n]$  is given by
	\begin{equation*}
		|\PrPol[\Zpnalfa]|=
		\begin{cases}
			p!(p-1)^pp^{(n^2+2n-2)p} & \textnormal{if } n<p
			\\[1ex]
			p!(p-1)^pp^ {(n^2+2n-3)p} & \textnormal{if } n=p
		\end{cases}.
	\end{equation*}
\end{theorem}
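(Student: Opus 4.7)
The plan is to apply Theorem~\ref{14} (for $n>1$) to reduce the counting of $|\PrPol[\Zpnalfa]|$ to known quantities, namely Kempner's formulas for $|\PolFun[\mathbb{Z}_{p^n}]|$ and $|\PrPol[\mathbb{Z}_{p^n}]|$ together with Theorem~\ref{st} for $|\Stab[p^n]|$. The case $n=1$ will be handled as a separate special case, since $\mathbb{Z}_{p^n}$ is then a field and Theorem~\ref{14} requires a local ring setting consistent with Kempner's formula for $n>1$.

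First, I would treat $n=1$. Here $\mathbb{Z}_p$ is a finite field, so Proposition~\ref{13.111} gives $|\PrPol[\mathbb{Z}_p[\alfa]]|=p!(p-1)^p p^p$. Plugging $n=1$ into the claimed formula $p!(p-1)^p p^{(n^2+2n-2)p}$ (which falls under the $n<p$ branch, as $p\ge 2$) yields the same value, so the $n=1$ case is established.

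Next, for $1<n\le p$, I would invoke Theorem~\ref{14} (applied with $R=\mathbb{Z}_{p^n}$, which is a finite local ring) to write
\[
|\PrPol[\Zpnalfa]|
= |\PolFun[\mathbb{Z}_{p^n}]|\cdot |\PrPol[\mathbb{Z}_{p^n}]|\cdot |\Stab[p^n]|.
\]
Then I would use the standing observation that $\mu(p^k)=kp$ for all $k\le p$ (mentioned right after Definition~\ref{mudef}) to evaluate the Kempner sums from Equation~(\ref{Kemp}):
\[
\sum_{k=1}^n \mu(p^k)=\tfrac{n(n+1)p}{2},\qquad
\sum_{k=3}^n \mu(p^k)=\tfrac{(n^2+n-6)p}{2}\quad(n\ge 2).
\]
This gives $|\PolFun[\mathbb{Z}_{p^n}]|=p^{n(n+1)p/2}$ and $|\PrPol[\mathbb{Z}_{p^n}]|=p!(p-1)^p p^{p(n^2+n-4)/2}$ (where the empty sum at $n=2$ correctly produces the factor $p^p$).

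Finally, I would substitute the values of $|\Stab[p^n]|$ from Theorem~\ref{st}, splitting into the two subcases. For $1<n<p$, $|\Stab[p^n]|=p^{np}$, and combining exponents gives
\[
\tfrac{n(n+1)p}{2}+\tfrac{p(n^2+n-4)}{2}+np=\tfrac{p(2n^2+4n-4)}{2}=(n^2+2n-2)p,
\]
producing the first branch of the formula. For $n=p$, $|\Stab[p^n]|=p^{(n-1)p}$ and the analogous exponent calculation yields $(n^2+2n-3)p$, matching the second branch. The main step is thus the application of Theorem~\ref{14}; the remaining work is bookkeeping in the exponents of $p$, and I expect no genuine obstacle beyond keeping track of the two subcases $n<p$ and $n=p$ (the latter being the only value of $n\le p$ for which $|\Stab[p^n]|$ deviates from $p^{np}$).
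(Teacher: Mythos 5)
Your proposal is correct and follows essentially the same route as the paper: the $n=1$ case via Proposition~\ref{13.111}, and for $1<n\le p$ the substitution of Theorem~\ref{st} and the simplified Kempner formulas (using $\mu(p^k)=kp$) into Theorem~\ref{14}. The exponent bookkeeping checks out in both subcases.
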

\begin{proof}
	The case $n=1$ is covered by Proposition~\ref{13.111}. Now,
	let $1<n\le p$. Using that $\mu(p^k)=kp$ for $k\le p$,
	we simplify the formulas for $|\PolFun[\mathbb{Z}_{p^n}]|$
	and $|\PrPol[\mathbb{Z}_{p^n}]|$ quoted in the introduction
	(Equation~(\ref{Kemp})) accordingly.  For $1<n\le p$,
	\begin{equation} \label{simpliefiedKemp}
		\left| \PolFun[\mathbb{Z}_{p^n}] \right| =
		p^{\frac{n(n+1)p}{2}}
		\qquad\hbox{ and }\qquad
		\left| \PrPol[\mathbb{Z}_{p^n}] \right| =
		p! (p-1)^p p^{-2p}
		p^{\frac{n(n+1)p}{2}}.
	\end{equation}
	
	Substituting the formula from Theorem~\ref{st} for $|\Stab[p^n]|$
	and the above expressions for $|\PolFun[\mathbb{Z}_{p^n}]|$
	and $|\PrPol[\mathbb{Z}_{p^n}]|$ in Theorem~\ref{14},
	we obtain  the desired result.
\end{proof}
\begin{theorem} \label{countfunc}
	Let $n\le p$.
	The number $|\PolFun[\Zpnalfa]|$ of polynomial functions
	on $\Zmalfa[p^n]$   is given by
	\[
	|\PolFun[\Zpnalfa]|=
	\begin{cases}p^{(n^2+2n)p} & \textnormal{if } n<p
	\\[1ex]
	p^ {(n^2+2n-1)p} & \textnormal{if } n=p
	\end{cases}.
	\]
\end{theorem}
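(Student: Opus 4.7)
The plan is to deduce the count from the multiplicative decomposition in Corollary~\ref{14b}, which for $n>1$ gives $|\PolFun[\Zpnalfa]| = |\PolFun[\mathbb{Z}_{p^n}]|^2 \cdot |\Stab[p^n]|$, and then substitute the two known factors on the right-hand side. The whole theorem is essentially a bookkeeping exercise once the preceding structural and counting results are in place.

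First I would dispose of the case $n=1$ separately, because it falls outside the hypothesis of Corollary~\ref{14b} (which requires $R$ to be a finite local ring that is not a field). Here $\mathbb{Z}_p[\alpha]=\mathbb{F}_p[\alpha]$, and Corollary~\ref{fieldfunc} already supplies $|\PolFun[\mathbb{F}_p[\alpha]]|=p^{3p}$. Since $p\ge 2$ forces $1<p$, the case $n=1$ lives in the first branch of the claimed formula, where $p^{(n^2+2n)p}=p^{3p}$; so the two agree.

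For $1<n\le p$, I would plug in two ingredients. The first is Equation~(\ref{simpliefiedKemp}), which, using $\mu(p^k)=kp$ for $k\le p$, simplifies Kempner's formula to $|\PolFun[\mathbb{Z}_{p^n}]|=p^{n(n+1)p/2}$. The second is Theorem~\ref{st}, which provides $|\Stab[p^n]|=p^{np}$ when $n<p$ and $|\Stab[p^n]|=p^{(n-1)p}$ when $n=p$. Squaring the first factor and multiplying by the second inside Corollary~\ref{14b} is then a one-line exponent calculation: the exponent of $p$ is $n(n+1)+n=n^2+2n$ in the case $n<p$ and $n(n+1)+(n-1)=n^2+2n-1$ in the case $n=p$, each multiplied by the factor $p$ coming from the common $p$-scaling.

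There is no real obstacle left at this point. The entire difficulty of the theorem is concentrated in the preceding section, in particular in Theorem~\ref{st}, whose proof required the fine analysis of the ideal $\Nulld[{\mathbb{Z}_{p^n}}]$ through the $(x^p-x)$-adic expansion (Lemma~\ref{15}, Theorem~\ref{160}, Corollary~\ref{16}), together with the reduction of $|\PolFun[\Zpnalfa]|$ to data over $\mathbb{Z}_{p^n}$ carried out in Sections~\ref{sec2}--\ref{stabsect}. Once those tools are in hand, Theorem~\ref{countfunc} is an immediate arithmetic corollary, and the only thing worth taking care of is the slight bookkeeping difference between the branches $n<p$ and $n=p$.
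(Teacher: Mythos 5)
Your proposal is correct and follows the paper's own proof exactly: the $n=1$ case is read off from Corollary~\ref{fieldfunc}, and for $1<n\le p$ one substitutes Theorem~\ref{st} and the simplified Kempner formula~(\ref{simpliefiedKemp}) into Corollary~\ref{14b}. The exponent arithmetic $n(n+1)+n=n^2+2n$ and $n(n+1)+(n-1)=n^2+2n-1$ checks out, and your observation that $n=1$ always falls into the first branch (since $p\ge 2$) is the right consistency check.
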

\begin{proof}
	The case $n=1$ is covered by Corollary~\ref{fieldfunc}. For $1< n\le p$, we
	substitute the expression from Theorem~\ref{st} for $|\Stab[p^n]|$ and
	the formula for $|\PolFun[\mathbb{Z}_{p^n}]|$ from Equation~(\ref{Kemp})
	(simplified as in Equation~(\ref{simpliefiedKemp}) in the proof of
	Theorem~\ref{countperm}) in Corollary~\ref{14b}.
\end{proof}

\section{A canonical form}\label{sec7}

In this section we find a canonical representation for the polynomial
functions on $\Zmalfa[p^n]$ whenever $n\le p$. As before
(see Definition~\ref{mudef}), $\mu(m)$ stands for the
smallest natural number $n$ such that $m$ divides $n!$.

\begin{lemma}[{\cite[Theorem 10]{pol2}}]\label{08011}
	Let $F$ be a polynomial function on $\mathbb{Z}_m$.
	Then $F$ is uniquely represented by a polynomial $f\in\mathbb{Z}[x]$
	of the form
	\[
	f(x)= \sum_{i=0}^{\mu(m)-1}a_ix^i\qquad\text{ with }\
	0\le a_i< \frac{m}{\gcd(m,i!)}.
	\]
\end{lemma}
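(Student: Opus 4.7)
The plan is to establish existence of a representation in the stated form via an explicit reduction, and then deduce uniqueness by a cardinality comparison with the known number of polynomial functions on $\mathbb{Z}_m$.

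For existence, I would first invoke Remark~\ref{01}, by which $(x)_{\mu(m)} = \prod_{k=0}^{\mu(m)-1}(x-k)$ is a monic null polynomial on $\mathbb{Z}_m$ of degree $\mu(m)$. Division with remainder in $\mathbb{Z}[x]$ then lets me replace any $g \in \mathbb{Z}[x]$ inducing $F$ with one of degree strictly less than $\mu(m)$. To bring each coefficient into the stipulated range, I introduce, for $0 \le i < \mu(m)$, the auxiliary null polynomials $h_i(x) := (m/d_i)\,(x)_i$ where $d_i := \gcd(m, i!)$. Each $h_i$ lies in $\mathbb{Z}[x]$, has leading term $(m/d_i)\,x^i$, and is a null polynomial on $\mathbb{Z}_m$, because $(x)_i = i!\binom{x}{i}$ is integer-valued and divisible by $i!$, hence by $d_i$, on integer inputs. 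Starting from the remainder and processing coefficients from $i = \mu(m)-1$ down to $i = 0$, at each step I subtract an integer multiple of $h_i$ to force $a_i$ into $[0, m/d_i)$. Since $\deg h_i = i$, this does not disturb the already-reduced higher-degree coefficients.

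For uniqueness, it suffices to count: the number of tuples $(a_0,\dots,a_{\mu(m)-1})$ with $0 \le a_i < m/d_i$ is $\prod_{i=0}^{\mu(m)-1} m/d_i$, and by the Chinese Remainder Theorem both this product and $|\PolFun[\mathbb{Z}_m]|$ factor multiplicatively over the prime-power decomposition of $m$. Thus it suffices to verify the identity at $m = p^n$, where $d_i = p^{\min(n,\nu_p(i!))}$. Switching the order of summation,
\[
\log_p \prod_{i=0}^{\mu(p^n)-1} \frac{p^n}{d_i} = \sum_{i=0}^{\mu(p^n)-1}\bigl(n - \min(n, \nu_p(i!))\bigr) = \sum_{k=1}^{n} \bigl|\{i \ge 0 : \nu_p(i!) < k\}\bigr| = \sum_{k=1}^{n}\mu(p^k),
\]
which matches the exponent in Kempner's formula~(\ref{Kemp}). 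Hence the two cardinalities agree, and combined with existence this upgrades to the desired bijection.

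The main obstacle, and the reason a direct uniqueness argument in the monomial basis is awkward, is that the change-of-basis matrix from $\{(x)_i\}$ to $\{x^i\}$ is triangular but not diagonal. In the falling factorial basis the null polynomials of degree $<\mu(m)$ do form the diagonal sublattice $\bigoplus_i (m/d_i)\mathbb{Z}\cdot(x)_i$, immediately yielding canonical representatives $\sum c_i(x)_i$ with $0\le c_i<m/d_i$; but the corresponding ranges for the $a_i$ in the monomial basis are only accessible via the global count given above.
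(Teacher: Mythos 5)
Your proof is correct, but it cannot be ``the same as the paper's'', because the paper offers no proof of this lemma at all: it is quoted verbatim from Singmaster \cite{pol2} (his Theorem~10). Your existence step is sound --- reducing first modulo the monic null polynomial $(x)_{\mu(m)}$ of Remark~\ref{01} and then sweeping coefficients downward with the null polynomials $(m/\gcd(m,i!))\,(x)_i$, which have degree exactly $i$ and leading coefficient $m/\gcd(m,i!)$, does put every coefficient into the stated range without disturbing higher ones. The uniqueness-by-counting step also checks out: the exponent identity $\sum_{i=0}^{\mu(p^n)-1}\bigl(n-\min(n,\nu_p(i!))\bigr)=\sum_{k=1}^{n}\mu(p^k)$ follows from $\nu_p(i!)<k\iff i<\mu(p^k)$ together with $\mu(p^k)\le\mu(p^n)$, and both sides of the comparison do factor over the prime-power decomposition of $m$. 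The one point worth flagging is logical economy rather than correctness: your uniqueness argument consumes Kempner's formula~(\ref{Kemp}), whereas the classical route (Kempner, Singmaster) goes the other way --- one shows directly that a null polynomial of degree $<\mu(m)$ written in the falling-factorial basis must have its $i$-th coefficient divisible by $m/\gcd(m,i!)$, which yields uniqueness of the canonical form \emph{and} the counting formula as a corollary. Within this paper, where (\ref{Kemp}) is quoted as known, your derivation is perfectly legitimate; in a fully self-contained development it would be circular, and the direct lattice argument you sketch in your closing paragraph (work in the basis $\{(x)_i\}$, where the null polynomials of degree $<\mu(m)$ form the diagonal sublattice) is the way to avoid that.
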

\begin{proposition}\label{8}
	Let $F\colon \Zmalfa\rightarrow \Zmalfa$ be a polynomial function
	on $\Zmalfa$. Then $F$ can be represented by a polynomial
	$f\in\mathbb{Z}[x]$  of the form
	\[
	f(x)= \sum_{i=0}^{2\mu(m)-1}a_ix^i+
	\sum_{j=0}^{\mu(m)-1}b_jx^j\alfa
	\qquad\text{with}\
	0\le a_i,b_j<m\ \
	\text{and}\ \
	0\le b_j< \frac{m}{\gcd(m,j!)}
	\]
	and the $b_j$ in such a representation are unique.
\end{proposition}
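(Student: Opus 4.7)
The plan is to combine the degree bounds from Corollary \ref{7} with Kempner's canonical form for polynomial functions on $\mathbb{Z}_m$ (Lemma \ref{08011}), applied separately to the constant and $\alfa$-components.

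First, by Corollary \ref{7}, every polynomial function $F$ on $\Zmalfa$ is induced by some polynomial $\tilde f = \tilde f_1 + \alfa \tilde f_2$ with $\tilde f_1,\tilde f_2\in\mathbb{Z}_m[x]$, $\deg \tilde f_1 \le 2\mu(m)-1$, and $\deg \tilde f_2 \le \mu(m)-1$. Lifting the coefficients of $\tilde f_1$ to their unique representatives in $\{0,1,\ldots,m-1\}\subseteq \mathbb{Z}$, we obtain the $a_i$ with $0\le a_i<m$.

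For the $b_j$ part I would use that, by Lemma \ref{3}(2) together with Corollary \ref{6}(3), the induced function $[\alfa \tilde f_2]_{\Ralfa}$ depends on $\tilde f_2$ only through the function $[\tilde f_2]_{\mathbb{Z}_m}$. In particular, replacing $\tilde f_2$ by any polynomial inducing the same function on $\mathbb{Z}_m$ does not change $F$. By Lemma \ref{08011}, the function $[\tilde f_2]_{\mathbb{Z}_m}$ is represented by a unique polynomial $\sum_{j=0}^{\mu(m)-1} b_j x^j \in \mathbb{Z}[x]$ with $0\le b_j < m/\gcd(m,j!)$. Substituting this polynomial for $\tilde f_2$ yields the claimed representation of $F$.

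For uniqueness of the $b_j$, suppose that both $f = \sum_{i=0}^{2\mu(m)-1} a_i x^i + \sum_{j=0}^{\mu(m)-1} b_j x^j\alfa$ and $g = \sum_{i=0}^{2\mu(m)-1} a_i' x^i + \sum_{j=0}^{\mu(m)-1} b_j' x^j\alfa$ satisfy the stated bounds and induce the same function on $\Zmalfa$. By Corollary \ref{6}(3), $\sum_{j=0}^{\mu(m)-1} b_j x^j$ and $\sum_{j=0}^{\mu(m)-1} b_j' x^j$ induce the same function on $\mathbb{Z}_m$. Since both polynomials satisfy the uniqueness conditions of Lemma \ref{08011}, we conclude $b_j = b_j'$ for every $j$.

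No step is expected to be a real obstacle: the entire argument is an assembly of the degree-reduction result (Corollary \ref{7}), the criterion that $\alfa f_2$ is captured by $[f_2]_{\mathbb{Z}_m}$ (from Lemma \ref{3} and Corollary \ref{6}), and Kempner's canonical form on $\mathbb{Z}_m$ (Lemma \ref{08011}). The only point worth stressing is why uniqueness is asserted only for the $b_j$ and not for the $a_i$: the function induced on $\Zmalfa$ by $f_1\in \mathbb{Z}[x]$ is determined by the pair $([f_1]_{\mathbb{Z}_m},[f_1']_{\mathbb{Z}_m})$ rather than by $[f_1]_{\mathbb{Z}_m}$ alone, so the coarse bound $0\le a_i<m$ leaves genuine freedom in the $a_i$.
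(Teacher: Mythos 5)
Your proof is correct and follows essentially the same route as the paper's: reduce degrees via Corollary~\ref{7}, observe via Corollary~\ref{6} that the $\alfa$-component of $f$ enters only through the function $[f_2]_{\mathbb{Z}_m}$ (not its derivative), and then normalize that component using Kempner's canonical form from Lemma~\ref{08011}, which also gives uniqueness of the $b_j$. Your closing remark on why the $a_i$ are not unique matches the paper's underlying reason exactly.
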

\begin{proof}
	By Corollary~\ref{7}, $F$ can be represented by a polynomial
	$g_1+\alfa g_2 $, where
	\[
	g_1(x)=\sum_{i=0}^{2\mu(m)-1}c_ix^i \qquad\textrm{and}\qquad
	g_2(x)=\sum_{j=0}^{\mu(m)-1}d_jx^j
	\]
	with $c_i,d_j \in \mathbb{Z}$.
	Choosing  $a_i, b_j$ to be the smallest
	non-negative integers such that $c_i\equiv a_i$ and
	$d_j\equiv b_j \mod{m}$, we see that $F$ is represented by
	\[
	g(x)=\sum_{i=0}^{2\mu(m)-1}a_ix^i+
	\sum_{j=0}^{\mu(m)-1}b_jx^j\alfa
	\]
	with $0\le a_i,b_j<m$.
	Now, since $\Zmalfa$ is a $\mathbb{Z}$-algebra, substituting elements
	of $\Zmalfa$ for the variable $x$ in $g$ defines a function on $\Zmalfa$.
	For $k,l\in \mathbb{Z}_m$, we have
	\[
	g(k+l\alfa)=\sum_{i=0}^{2\mu(m)-1}a_i(k+l\alfa)^i+
	\sum_{j=0}^{\mu(m)-1}b_jk^j\alfa.
	\]
	By Corollary~\ref{Gencount}, $F$ depends on the function induced by
	$\sum_{j=0}^{\mu(m)-1}b_jx^j$ on $ \mathbb{Z}_m$ but not on the
	function induced by its derivative. So we can replace
	$\sum_{j=0}^{\mu(m)-1}b_jx^j$ by any polynomial $h\in\mathbb{Z}[x]$
	such that $[\sum_{j=0}^{\mu(m)-1}b_jx^j]_m=[h]_m$.
	Hence, by Corollary~\ref{Gencount} and
	Lemma~\ref{08011}, $b_j$ can be chosen uniquely such that
	$0\le b_j< \frac{m}{\gcd(m,j!)}$.
\end{proof}

By combining Proposition~\ref{8} with Proposition~\ref{5}, we
obtain the following corollary.
\begin{corollary}\label{08001}
	Let $p$ be a prime number and $n\le p$ a positive integer.
	Let $F\colon \Zmalfa[p^n]\rightarrow \Zmalfa[p^n]$
	be a polynomial function on $\Zmalfa[p^n]$. Then $F$
	can be represented as a polynomial
	$f(x)= \sum_{i=0}^{(n+1)p-1}a_ix^i+
	\sum_{j=0}^{np-1}b_jx^j\alfa$
	with $0\le a_i,b_j< p^n$.
	Moreover, $b_j$ can be chosen uniquely
	such that $0\le b_j< \frac{p^n}{\gcd(p^n,j!)}$.
\end{corollary}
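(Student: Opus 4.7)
The plan is to derive this corollary by combining the general canonical form of Proposition~\ref{8} with the sharper null polynomial available for $n\le p$ in Proposition~\ref{5}. Since $n\le p$ we have $\mu(p^n)=np$, so Proposition~\ref{8} applied to $m=p^n$ already yields a representation of the asserted shape including the uniqueness bound $0\le b_j<p^n/\gcd(p^n,j!)$ on the $b_j$, but with the weaker degree bound $\deg f_1\le 2\mu(p^n)-1=2np-1$ on the $a_i$-part. Only the degree bound on the $a_i$ needs to be tightened from $2np-1$ to $(n+1)p-1$, and this is exactly what Proposition~\ref{sur} supplies once we feed it a shorter null polynomial.

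Concretely, I would invoke Proposition~\ref{sur} with $h_1(x)=(x)_{(n+1)p}$, a monic null polynomial on $\mathbb{Z}_{p^n}[\alfa]$ of degree $(n+1)p$ by Proposition~\ref{5} (this is where $n\le p$ enters), and $h_2(x)=(x)_{np}=(x)_{\mu(p^n)}$, a monic null polynomial on $\mathbb{Z}_{p^n}$ of degree $np$ by Remark~\ref{01}. Since $\min((n+1)p,np)=np$, Proposition~\ref{sur} produces $f=f_1+f_2\alfa$ inducing $F$, with $f_1,f_2\in\mathbb{Z}[x]$, $\deg f_1<(n+1)p$ and $\deg f_2<np$. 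Reducing all coefficients into $[0,p^n)$ yields an expression of the form $\sum_{i=0}^{(n+1)p-1}a_ix^i+\sum_{j=0}^{np-1}b_jx^j\alfa$ with $0\le a_i,b_j<p^n$. For the uniqueness of the $b_j$ I would appeal to Corollary~\ref{Gencount}: the function $F$ depends on $f_2$ only through the polynomial function $[f_2]_{p^n}$ it induces on $\mathbb{Z}_{p^n}$, so I may replace $f_2$ by any polynomial in $\mathbb{Z}[x]$ inducing the same function on $\mathbb{Z}_{p^n}$. Lemma~\ref{08011} then provides a unique such representative of degree less than $\mu(p^n)=np$ whose coefficients satisfy $0\le b_j<p^n/\gcd(p^n,j!)$, which is precisely the claimed uniqueness.

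There is no genuine obstacle, as the two key ingredients--the sharpened null polynomial from Proposition~\ref{5} and Kempner's canonical form from Lemma~\ref{08011}--are already available. The only point worth flagging is the asymmetry in the statement: the $a_i$ are \emph{not} claimed to be unique because $f_1$ is only determined modulo $\Nulld[\mathbb{Z}_{p^n}]$, a strict subideal of $\Null[\mathbb{Z}_{p^n}]$, so no canonical form of Kempner type is forced on it within the allotted degree range; by contrast, the $\alfa$-part $f_2$ is only constrained modulo the full ideal $\Null[\mathbb{Z}_{p^n}]$, which is exactly the situation governed by Lemma~\ref{08011}.
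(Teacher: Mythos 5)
Your proof is correct and follows essentially the same route the paper intends: the paper derives this corollary simply "by combining Proposition~\ref{8} with Proposition~\ref{5}", which is exactly what you do — using Proposition~\ref{sur} with the sharper null polynomial $(x)_{(n+1)p}$ from Proposition~\ref{5} to improve the degree bound on the $a_i$-part, and re-running the uniqueness argument for the $b_j$ via Corollary~\ref{Gencount} and Lemma~\ref{08011} just as in the proof of Proposition~\ref{8}. Your closing remark correctly identifies why uniqueness is claimed only for the $b_j$.
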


Finally, we give a canonical representation
for polynomial functions on $\Zmalfa[p^n]$ for $n\le p$.
\begin{theorem}\label{canon}
	Let $n\le p$. Every polynomial function $F$ on $\Zmalfa[p^n]$ is
	uniquely represented by a polynomial  $f\in\mathbb{Z}[x]$ of the form
	\[f(x)=\sum_{k=0}^{m}f_k(x)(x^p-x)^k+
	\sum_{i=0}^{np-1}b_ix^i\alfa \quad\text{with}\quad
	f_k(x)=\sum_{j=0}^{p-1} a_{jk} x^j,\]
	where
	\begin{enumerate}
		\item[\rm (1)]
		$m=\min(n, p-1)$
		\item[\rm (2)]
		$ 0\le  a_{j0}  < p^n$ and $0\le  a_{jk}  < p^{n-k+1}$ \ \
		(for $j=0,\ldots, p-1$ and $k=1,\ldots, m$)
		\item[\rm (3)]
		$0\le b_{i} < \frac{p^n}{\gcd(p^n,i!)}$ \ \ (for $i=0,\ldots, np-1$).
	\end{enumerate}
\end{theorem}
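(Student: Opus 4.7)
The plan is to build on Corollary~\ref{08001}, which already gives a representation of $F$ by a polynomial $f = f_1 + f_2\alpha$ in which the $\alpha$-part $f_2 = \sum_{i=0}^{np-1} b_i x^i$ is canonical with $0 \le b_i < p^n/\gcd(p^n, i!)$. What remains is to recast the non-$\alpha$ part $f_1 = \sum_{i=0}^{(n+1)p-1} a_i x^i$ in the required form.

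For existence, I would first use Remark~\ref{usefulrep}(1) to expand $f_1$ uniquely as
\[
f_1(x) = \sum_{k=0}^{n} f_k(x)(x^p - x)^k, \qquad f_k(x) = \sum_{j=0}^{p-1} a_{jk} x^j,
\]
with $a_{jk} \in \mathbb{Z}$ (the top index is $n$ because $\deg f_1 < (n+1)p$). Set $r = \min(n+1, p)$, so that $m = r-1 = \min(n, p-1)$. Corollary~\ref{15.1} says $(x^p - x)^r$ is a null polynomial on $\Zmalfa[p^n]$; since $\Nulld[\mathbb{Z}_{p^n}]$ is an ideal (Remark~\ref{nullinpolf}), so is $(x^p-x)^k$ for every $k \geq r$. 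In the case $n=p$ this lets me discard the $k = n = p$ summand; for $n < p$ no discarding is needed. In both cases, $k$ now ranges only over $0, \ldots, m$. Next, I reduce each remaining coefficient to its least non-negative representative modulo the prescribed modulus ($p^n$ for $k=0$ and $p^{n-k+1}$ for $1 \le k \le m$); each such reduction alters $f_1$ by a polynomial whose $(x^p-x)$-expansion satisfies the hypotheses of Theorem~\ref{160} and hence lies in $\Nulld[\mathbb{Z}_{p^n}]$, so by Corollary~\ref{Gencount} the induced function on $\Zmalfa[p^n]$ is unchanged.

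For uniqueness, suppose two polynomials $f, g$ of the stated form induce the same function on $\Zmalfa[p^n]$. By Corollary~\ref{Gencount}, the $\alpha$-parts are congruent modulo $\Null[\mathbb{Z}_{p^n}]$ and the non-$\alpha$-parts are congruent modulo $\Nulld[\mathbb{Z}_{p^n}]$. Uniqueness of the $b_i$ comes directly from Lemma~\ref{08011}. For the non-$\alpha$-parts, the difference $\sum_{k=0}^{m}(f_k - g_k)(x)(x^p - x)^k$ lies in $\Nulld[\mathbb{Z}_{p^n}]$; since $k \le m < r$ throughout, Theorem~\ref{160} forces $\Delta a_{j0} \equiv 0 \pmod{p^n}$ and $\Delta a_{jk} \equiv 0 \pmod{p^{n-k+1}}$ for $1 \le k \le m$, and the prescribed coefficient ranges then force each difference to vanish.

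The main subtlety is the interplay between the truncation at $k=m$ and Theorem~\ref{160}: one needs the range of $k$ to land exactly in $\{0,1,\ldots,r-1\}$, so that Theorem~\ref{160} delivers tight coefficient-wise congruences rather than the ``no restriction'' regime it reports for $k \ge r$. Once this alignment is in place, both existence and uniqueness reduce to bookkeeping against the ideals $\Null[\mathbb{Z}_{p^n}]$ and $\Nulld[\mathbb{Z}_{p^n}]$, using Corollary~\ref{Gencount}, Corollary~\ref{15.1}, Theorem~\ref{160}, and Lemma~\ref{08011} as the key tools.
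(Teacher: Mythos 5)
Your proof is correct, and while the existence half follows the paper's route almost verbatim (Corollary~\ref{08001} for the $\alpha$-part, Corollary~\ref{15.1} and Proposition~\ref{sur} to truncate at $(x^p-x)^{m+1}$, the expansion of Remark~\ref{usefulrep}, and coefficient reduction justified by Theorem~\ref{160} and Corollary~\ref{Gencount}), your uniqueness argument is genuinely different. The paper proves uniqueness by counting: it tallies the number of polynomials of the canonical form, namely $p^{np}\prod_{k=1}^{m}p^{p(n-k+1)}\cdot p^{pn(n+1)/2}$, observes that this equals $|\PolFun[\Zpnalfa]|$ as computed in Theorem~\ref{countfunc}, and concludes that a surjective assignment between sets of equal finite cardinality must be a bijection. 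You instead argue directly: if two canonical forms induce the same function, Corollary~\ref{Gencount} puts the difference of their non-$\alpha$ parts in $\Nulld[\mathbb{Z}_{p^n}]$, the forward direction of Theorem~\ref{160} (applicable because every index $k\le m$ lies strictly below $\min(p,n+1)$, as you correctly emphasize) yields the congruences $p^n\mid \Delta a_{j0}$ and $p^{n-k+1}\mid \Delta a_{jk}$, and the coefficient ranges then kill the differences; Lemma~\ref{08011} handles the $b_i$. Your route has the advantage of being self-contained and independent of the counting formula of Theorem~\ref{countfunc}, so it could in principle be used to \emph{derive} that formula rather than presuppose it; the paper's route is shorter given that Theorem~\ref{countfunc} is already in hand, but it leans on the full weight of the earlier counting machinery. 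Both are valid, and your direct argument is arguably the cleaner logical organization.
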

\begin{proof}
	Let $F$ be a polynomial function on  $\Zmalfa[p^n]$.
	By Corollary~\ref{08001}, we can represent $F$ by $f=g+\alfa h$
	with $g,h \in \mathbb{Z}[x]$, such that $\deg g<(n+1)p-1$ and
	$h(x)=\sum_{i=0}^{np-1}b_ix^i$ with
	$0\le b_{i}<  \frac{p^n}{\gcd(p^n,i!)}$;
	and the coefficients $b_i$ in such a representation are unique.
	
	By Corollary~\ref{15.1}, $(x^p-x)^{m+1}$ is  null on $\Zmalfa[p^n]$.
	Thus  we can choose $g$ with $\deg g<p(m+1)$ by  Proposition~\ref{sur}.
	We expand $g$ as in Lemma~\ref{15},
	$g(x)=\sum_{k=0}^{m}g_k(x)(x^p-x)^k$, where
	$g_k(x)=\sum_{j=0}^{p-1} c_{jk} x^j\in \mathbb{Z}[x]$.
	
	By division with remainder, we get
	$c_{j0}=p^{n}q_{j0}+a_{j0}$ and $c_{jk}=p^{n-k+1}q_{jk}+a_{jk}$
	with $0\le a_{j0}<p^n$ and $0\le a_{jk}<p^{n-k+1}$
	for $j=0,\ldots,p-1$, and  $k=1,\ldots,m$.
	By  Theorem~\ref{160},
	\[
	p^n(x^p-x)\quv p^{n-k+1}(x^p-x)^k\quv 0 \qquad\text{on}\ \ \Zmalfa[p^n].
	\]
	Thus, if we set  $f_k(x)=\sum_{j=0}^{p-1} a_{jk} x^j$
	for  $k=0,\ldots, m$,
	we have, by  Corollary  \ref{6},
	\[
	g(x)= 	\sum_{k=0}^{m}g_k(x)(x^p-x)^k \quv
	\sum_{k=0}^{m}f_k(x)(x^p-x)^k  \qquad\text{ on }\ \ \Zmalfa[p^n],
	\]
	and  hence we can replace $g$ by
	$\sum_{k=0}^{m}\sum_{j=0}^{p-1} f_k(x)(x^p-x)^k$
	in the representation of the function $F$. Therefore $F$ is
	induced by $f=g+\alfa h$, where
	$g(x)=\sum_{k=0}^{m}\sum_{j=0}^{p-1} a_{jk} x^j(x^p-x)^k$\!,
	with  $0\le a_{j0}<p^n$, $0\le a_{jk}<p^{n-k+1}$ for
	$j=0,\ldots,p-1$, and	$k=1,\ldots,m$; and $h$ as above.
	To count the number of ways  of selecting such a polynomial $f$,
	we need to count the number of ways of choosing $g$ and $h$.
	First, we do that for $g$. We note that $f_0(x)$ can
	be determined in $p^{np}$ ways, since
	$a_{j0}<p^n$ for $j=0,\ldots,p-1$. While, if $1\le k\le m$,
	$f_k(x)$ can be selected in $p^{p(n-k+1)}$ ways,
	since $0\le a_{jk}<p^{n-k+1}$ for $j=0,\ldots,p-1$.
	So, the number of ways to choose $g$ is
	\[p^{np}\prod_{k=1}^{m}p^{p(n-k+1)}=
	p^{np}\prod_{k=0}^{m-1}p^{p(n-k)}.\]
	On the other hand, simple calculations show that
	$\sum_{i=0}^{np-1}b_ix^i\alfa$ can be chosen in
	$p^{\frac{pn(n+1)}{2}}$ ways, since
	$0\le b_{i}<  \frac{p^n}{\gcd(p^n,i!)}$.
	Thus the number  of ways that 	$f$ can be chosen is
	\[
	p^{np}\prod_{k=0}^{m-1}p^{p(n-k)}\cdot p^{\frac{pn(n+1)}{2}}=
	\begin{cases}p^{(n^2+2n)p} & \textnormal{if } n<p
	\\[1ex]
	p^{(n^2+2n-1)p} & \textnormal{if } n=p
	\end{cases}.
	\]
	By Theorem~\ref{countfunc}, this last quantity equals
	$|\PolFun[\Zpnalfa]|$ and, therefore, the representation
	is unique.
	\end{proof}
	
	\section*{Acknowledgement}
	The authors would like to thank Irena Swanson for valuable
	suggestions and comments on earlier versions of the manuscript.


\begin{thebibliography}{9}
		\frenchspacing
		
	\bibitem{unitval}
	{\sc Al-Maktry,~A.~A.}:
	\newblock {\it On the group of unit-valued polynomial functions},
	AAECC (2021). https://doi.org/10.1007/s00200-021-00510-x.
		
		\bibitem{severalvar}
		{\sc Al-Maktry,~A.~A.}:
		\newblock {\it Polynomial functions over dual numbers of 
			several variables},
		\newblock {arXiv:2002.01304}, preprint.
		
		\bibitem{gal}
		{\sc Brawley,~J. V.---Mullen,~G. L.}:
		\newblock {\it Functions and polynomials over {G}alois rings},
		\newblock {J. Number Theory}, {\bf 41} (1992), 156--166.
		
		\bibitem{BuBaHo17}
		{\sc Bulyovszky, B.--- Horv\'{a}th, G.}:
		\newblock {\it Polynomial functions over finite commutative rings},
		\newblock {Theoret. Comput. Sci.}, {\bf 703} (2017), 76--86.
		
		\bibitem{CeDeAy18ccc}
		{\sc Cengellenmis, Y.---Dertli, A.---Aydin, N.}:
		\newblock {\it Some constacyclic codes over 
			{$\Bbb Z_4[u]/\langle u^2\rangle$}, new {G}ray maps,
			and new quaternary codes},
		\newblock {Algebra Colloq.}, {\bf 25} (2018), 369--376.
		
		\bibitem{Chen96pfZni}
		{\sc Chen, Z.}:
		\newblock {\it On polynomial functions from 
			{${\Bbb Z}_{n_1}\times {\Bbb Z}_{n_2}\times \cdots \times {\Bbb Z}_{n_r}$} 
			to {${\Bbb Z}_m$}},
		\newblock {Discrete Math}, {\bf 162} (1996), 67--76.
		
		\bibitem{DingLi15Gray}
		{\sc Ding,  J.---Li, H.}:
		\newblock {\it The {G}ray images of {$(1+u)$} constacyclic codes over
			{$F_{2^m}[u]/\langle u^k \rangle$}},
		\newblock {J. Appl. Math. Comput.}, {\bf 49} (2015), 433--445.
		
		\bibitem{weakstrong}
		{\sc Frisch, S.}:
		\newblock {\it When are weak permutation polynomials strong?},
		\newblock {Finite Fields Appl.}, {\bf 1} (1995),
		437--439.
		
		\bibitem{Suit}
		{\sc Frisch, S.}:
		\newblock {\it Polynomial functions on finite commutative rings 
			In {\em Advances in Commutative Ring Theory ({F}ez, 1997)}},
		\newblock {Lecture Notes in Pure and Appl. Math.}, {\bf 205} (1999), 
		323--336.
		
		\bibitem{per2}
		{\sc Frisch, S.---Krenn, D.}:
		\newblock {\it Sylow {$p$}-groups of polynomial permutation on 
			the integers {${\rm mod}\,p^n$}},
		\newblock {J. Number Theory}, {\bf 133} (2013), 4188--4199.
		
		\bibitem{PERForm}
		{\sc G\"{o}rcs\"{o}s, D.---Horv\'{a}th, G.---M\'{e}sz\'{a}ros, A.}:
		\newblock {\it Permutation polynomials over finite rings},
		\newblock {Finite Fields Appl.}, {\bf 49} (2018), 198--211.
		
		\bibitem{GuDu15}
		{\sc Guha, A.--- Dukkipati, A.}:
		\newblock {\it A faster algorithm for testing polynomial representability of
			functions over finite integer rings},
		\newblock {Theoret. Comput. Sci.}, {\bf 579} (2015), 88--99.
		
		\bibitem{ratio}
		{\sc Jiang,  J.}:
		\newblock {\it A note on polynomial functions over finite commutative rings},
		\newblock { Adv. Math. (China)}, {\bf 39} (2010), 555--560.
		\comment{
			\bibitem{JiPeSuZh06}
			{\sc Jiang, J. J.---  Peng, G. H.---Sun, Q.---Zhang, Q. }:
			\newblock {\it On polynomial functions over finite commutative rings},
			\newblock {Acta Math. Sin. (Engl. Ser.)}, {\bf 22} (2006), 1047--1050.}
		
		\bibitem{KaiNoe87}
		{\sc Kaiser, H. K.--- N\"obauer, W.}:
		\newblock {\it Permutation polynomials in several variables over residue
			class rings},
		\newblock {J. Austral. Math. Soc. Ser. A}, {\bf 43} (1987), 171--175.
		
		\bibitem{pol1}
		{\sc Keller, G.---Olson, F. R.}:
		\newblock {\it Counting polynomial functions {$({\rm mod}\ p^{n})$}},
		\newblock {Duke Math. J.}, {\bf 35} (1968), 835--838.
		
		\bibitem{func}
		{\sc Kempner, A. J.}:
		\newblock {\it Miscellanea},
		\newblock {Amer. Math. Monthly}, {\bf 25} (1918), 201--210.
		
		\bibitem{Residue}
		{\sc Kempner, A. J.}:
		\newblock {\it Polynomials and their residue systems},
		\newblock {Trans. Amer. Math. Soc.}, {\bf 22} (1921), 240--266, 267--288.
		
		\bibitem{LJ09pfnv}
		{\sc Liu, N. P.---Jiang, J. J.}:
		\newblock {\it Polynomial functions in {$n$} variables over a
			finite commutative ring},
		\newblock {Sichuan Daxue Xuebao}, {\bf 46} (2009), 44--46.
		
		\bibitem{finiterings}
		{\sc  McDonald,  R. N.}:
		\newblock {\it Finite rings with identity},
		\newblock {Marcel Dekker, Inc., New York}, 1979.
		
		\bibitem{Nec80PTPI}
		{\sc Nechaev, A. A.}:
		\newblock {\it Polynomial transformations of finite commutative
			local rings of principal ideals},
		\newblock {Math. Notes}, {\bf 27} (1980), 425--432.
		\newblock {transl. from Mat. Zametki}, {\bf 27} (1980), 885--897.
		
		\bibitem{NoePFPR82}
		{\sc N\"obauer, W.}:
		\newblock {\it Poly\-nom\-funk\-ti\-onen auf primen {R}est\-klas\-sen},
		\newblock {Arch. Math. (Basel)}, {\bf 39} (1982), 431--435.
		
		\bibitem{NoeOpE}
		{\sc N\"obauer, W.}:
		\newblock{\it Die Operation des Einsetzens bei Polynomen in mehreren
			Unbestimmten},
		\newblock{\it J. Reine Angew. Math.}, (1979), 207--220.
		
		\bibitem{Noe64PTPP}
		{\sc N\"obauer, W.}:
		\newblock {\it Zur {T}he\-orie der 
			{P}oly\-nom\-trans\-form\-ation\-en und
			{P}er\-mut\-ations\-poly\-nome},
		\newblock {Math. Ann.}, {\bf 157} (1964), 332--342.
		
		\bibitem{Noe55GRP}
		{\sc N\"obauer, W.}:
		\newblock {\it Gruppen von {R}est\-poly\-nom\-ideal\-rest\-klas\-sen 
			nach {P}rim\-zahl\-po\-tenzen},
		\newblock {Monatsh. Math.}, {\bf 59} (1955), 194--202.
		
		\bibitem{Noe53GRP}
		{\sc N\"obauer, W.}:
		\newblock {\it \"{U}ber {G}ruppen von {R}estklassen nach {R}estpolynomidealen},
		\newblock {\"{O}sterreich. Akad. Wiss. Math.-Nat. Kl. S.-B. IIa}, {\bf 162} (1953), 207--233.
		
		\bibitem{pol2}
		{\sc Singmaster, D.}:
		\newblock {\it On polynomial functions {\rm mod} $m$},
		\newblock {J. Number Theory}, {\bf 6} (1974), 345--352.
		
		\bibitem{WZ09pp2v}
		{\sc  Wei, Q.---Zhang, Q.}:
		\newblock {\it On permutation polynomials in two variables over
			{${\Bbb Z}/p^2{\Bbb Z}$}},
		\newblock {Acta Math. Sin. (Engl. Ser.)}, {\bf 25} (2009), 1191--1200.
		
		\bibitem{WZ07sos}
		{\sc  Wei, Q.---Zhang, Q.}:
		\newblock {\it On strong orthogonal systems and weak permutation 
			polynomials over finite commutative rings},
		\newblock {Finite Fields Appl.}, {\bf 13} (2007), 113--120.
		
		\bibitem{Wie83pfZm}
		{\sc Wiesenbauer, J.}:
		\newblock {\it On polynomial functions over residue class rings of
			$\mathbb{Z}$},
		\newblock {\it Contributions to general algebra 2 (Proc. of Conf. in
			Klagenfurt 1982)},
		\newblock {H{\"o}lder-Pichler-Tempsky, Teubner},  (1983), 395--398.
		
		\bibitem{Zh04pfpp}
		{\sc Zhang, Q.}:
		\newblock {\it Polynomial functions and permutation polynomials 
			over some finite commutative rings},
		\newblock {J. Number Theory}, {\bf 105} (2004), 192--202.
		
	\end{thebibliography}
\end{document}